\def\g{\ensuremath{\mathfrak{g}}}
\def\p{\ensuremath{\mathfrak{p}}}
\def\q{\ensuremath{\mathfrak{q}}}
\newtheorem{theorem}{Theorem}[section]
\newtheorem{proposition}{Proposition}[section]
\newtheorem{lemma}{Lemma}[section]
\newtheorem{corollary}{Corollary}[section]
\newtheorem{definition}{Definition}[section]
\theoremstyle{remark}
\newtheorem{remark}{Remark}[section]
\newcommand{\der}{{\rm d}}
\newcommand{\dz}{\wedge}
\newcommand{\be}{\begin{equation}}
\newcommand{\ee}{\end{equation}}
\newcommand{\bma}{\begin{pmatrix}}
\newcommand{\ema}{\end{pmatrix}}
\newcommand{\soa}{\frak{so}}
\newcommand{\Om}{\Omega}
\newcommand{\om}{\omega}
\newcommand{\Span}{\mathrm{Span}}
\newcommand{\bbR}{\mathbb{R}}
\newcounter{mnotecount}[section]
\renewcommand{\themnotecount}{\thesection.\arabic{mnotecount}}
\newcommand{\mnote}[1]%{}%
{\protect{\stepcounter{mnotecount}}$^{\mbox{\footnotesize
$%\!\!\!\!\!\!\,
\bullet$\themnotecount}}$ \marginpar{\color{red}
\raggedright\tiny\em
$\!\!\!\!\!\!\,\bullet$\themnotecount: #1} }
\newcommand{\hook}{\raisebox{-0.35ex}{\makebox[0.6em][r]
{\scriptsize $-$}}\hspace{-0.15em}\raisebox{0.25ex}{\makebox[0.4em][l]{\tiny
 $|$}}}
\title{New relations between $\mathrm{G}_2$-geometries in dimensions $5$ and $7$}
 \author{Thomas Leistner} \address{School of Mathematical Sciences, University of Adelaide, SA 5005, Australia} \email{\tt thomas.leistner@adelaide.edu.au}
\author{Pawe\l~ Nurowski} \address{Centrum Fizyki Teoretycznej,
Polska Akademia Nauk, Al. Lotnik\'ow 32/46, 02-668 Warszawa, Poland}
\email{nurowski@cft.edu.pl}
\author{Katja Sagerschnig} \address{Faculty of Mathematics, University of Vienna, Oskar-Morgenstern-Platz 1, A-1090 Wien, Austria}
\email{katja.sagerschnig@univie.ac.at}
\thanks{This work was supported  by the  Australian Research
Council via  the grants FT110100429 and DP120104582, by the Polish National Science Center (NCN) via DEC-2013/09/B/ST1/01799, and by  the Austrian Science Fund (FWF) via grant J 3071-N13.}
\date{\today}
\begin{document}

\begin{abstract}
There are two well-known parabolic split $G_2$-geometries in dimension five, $(2,3,5)$-distributions and $\mathrm{G}_2$-contact structures. Here we link these two geometries with yet another $G_2$-related  contact structure, which lives on a seven-manifold.
We present a natural geometric construction of a Lie contact structure on a seven-dimensional  bundle over a five-manifold endowed with a $(2,3,5)$-distribution. For a class of distributions  the induced Lie contact structure is constructed explicitly and we determine its symmetries. We further study the relation between the canonical normal Cartan connections associated with  the two structures. In particular, we show that the Cartan holonomy of the induced Lie contact structure reduces to $\mathrm{G}_2$. 
Moreover, the curved orbit decomposition associated with a $\mathrm{G}_2$-reduced Lie contact structure on a seven-manifold is discussed. It is shown that in a neighbourhood of each point on the open curved orbit the structure descends to a $(2,3,5)$-distribution on a local leaf space, provided an additional curvature condition is satisfied. The closed orbit carries an induced $G_2$-contact structure.
%$(2,3,5)$-distributions and $\mathrm{G}_2$-contact structures, respectively.
\end{abstract}

\maketitle
\section{Introduction}
A distribution $\mathcal{D}$
%=\mathrm{span}(\xi_1,\,\xi_2 )$ 
(locally) spanned by vector fields $\xi_1$ and $\xi_2$ on a five manifold $M$
is called a $(2,3,5)$-distribution, or generic,  if the five vector fields
\begin{align*}
\begin{aligned}
\xi_1,\,\xi_2,\, [\xi_1,\xi_2],\, [\xi_1,[\xi_1,\xi_2]],\, [\xi_2,[\xi_1,\xi_2]]
\end{aligned}
\end{align*}
are linearly independent at each point. Two distributions $\mathcal{D}$ and $\mathcal{D}'$ are said to be   equivalent, if there is a  diffeomorphism $\phi:M\to M'$ such that $\phi_*\, \mathcal{D}=\mathcal{D}'$.
It is a classical result \cite{goursat}, that  every $(2,3,5)$ distribution is locally equivalent to
the kernel $\mathcal{D}_F=\mathrm{ker}(\omega_1,\omega_2,\omega_3)$ of three one-forms
\begin{align*}
\begin{aligned}
\omega_1= \der y - p\, \der x, \ \omega_2=\der p - q\, \der x, \ \omega_3 = \der z - F \der x
\end{aligned}
\end{align*}
for coordinates $(x,y,p,q,z)$ on an open subset $U\subset \mathbb{R}^5$ around the origin and a smooth function $F=F(x,y,p,q,z)$ such that $F_{qq}\neq 0$.
%The requirement that $\mathcal{D}$ is $(2,3,5)$ precisely means that the function $F$ satisfies the genericity condition 
Infinitesimal symmetries of a distribution $\mathcal{D}\subset T M$  are vector fields $\eta\in\mathfrak{X}(M)$ that preserve the distribution, i.e. $\mathcal{L}_{\eta}\xi= [\eta, \xi] \in\Gamma(\mathcal{D})$ for all $\xi\in\Gamma(\mathcal{D})$. Cartan and Engel found, independently but at the same time \cite{cartan1, engel1}, the first explicit realization of the  exceptional simple Lie algebra $\g_2$. They realized it as the Lie algebra of infinitesimal symmetries of a  rank $2$ distribution that is locally equivalent
to the  distribution $\mathcal{D}_{q^2}$ associated with the function $F=q^2$.
Cartan's later fundamental work \cite{cartan-cinq} shows that the Cartan-Engel distribution $\mathcal{D}_{q^2}$ can indeed be regarded as the \emph{flat} and \emph{maximally symmetric} model of $(2,3,5)$ distributions. Flat, because he shows how to associate to any $(2,3,5)$ distribution a curvature tensor, called Cartan quartic $\mathcal{C}\in\Gamma(S^4 \mathcal{D}^*)$, which vanishes if and only if the distribution is locally equivalent to $\mathcal{D}_{q^2}$. Maximally symmetric, because  he proves that the symmetry algebra of a distribution with nonvanishing Cartan quartic has dimension smaller than $\mathrm{dim}(\mathfrak{g}_2)=14$.

More recent work \cite{nurowski-metric} associates to a $(2,3,5)$ distribution a canonical conformal structure of signature $(2,3)$, i.e., an equivalence class  of pseudo-Riemannian metrics of signature $(2,3)$ where two metrics  $g$ and $\hat{g}$ are considered equivalent if one is a conformal rescaling of the other, meaning that $\hat{g} = e^{2f} \, g$. On the one hand, this allows to understand the geometry of $(2,3,5)$ distributions in terms of the more familiar conformal geometry. On the other hand, the construction provides an interesting class of conformal metrics given explicitly in terms of a single function $F$, see \cite{nurowski-hol, leistner-nurowski, graham-willse, travis}.  
From an algebraic point of view, the construction is based on the Lie algebra inclusion $\mathfrak{g}_2\hookrightarrow \mathfrak{so}(3,4)$,  see \cite{mrh-sag-rank2}. 

The present article is of a similar flavour. It is motivated by the  observation that, while $(2,3,5)$ distributions and conformal structures are among the most prominent geometries associated with the Lie algebras $\mathfrak{g}_2$ and $\mathfrak{so}(3,4)$, respectively, they are not the only  ones.  
In particular, associated with every simple Lie algebra, there is a \emph{parabolic contact geometry}. A  parabolic contact geometry is given by a contact distribution (i.e., a corank one distribution that is locally  given as the kernel of a one-form  $\theta$  such that  $\theta\wedge (d\theta)^n\neq 0$) and additional geometric structure on the contact distribution. By Pfaff's theorem, all contact distributions are locally equivalent. However, by equipping the distribution  with additional geometric structure, e.g., with a tensor field of some type or a decomposition of the contact distribution as a tensor product of  auxiliary vector bundles, one again obtains an interesting geometry with non-trivial local invariants.  Every  parabolic contact geometry has a flat and maximally symmetric model and the infinitesimal symmetry algebra of the model realizes  the simple Lie algebra in question.  
Parabolic contact geometries associated with special orthogonal Lie algebras $\mathfrak{so}(p+2,q+2)$ have been studied under the name Lie contact structures, \cite{Sato-yama1,Sato-yama2, miyaoka1, miyaoka2}, by means of Tanaka theory \cite{tanaka}. They are known to appear naturally on unit sphere tangent bundles over Riemannian manifolds; the model is related to the classical Lie sphere geometry of oriented hypersheres in a sphere.

Now a natural question arises:  Can we use the Lie algebra inclusion $\mathfrak{g}_2\hookrightarrow \mathfrak{so}(3,4)$ to relate $(2,3,5)$-distributions to the $\mathfrak{so}(3,4)$-Lie contact geometry, as we did for the construction of conformal structures from $(2,3,5)$ distributions? Inspecting the models of the two geometries shows that there is indeed such a natural geometric construction. More precisely, in Section \ref{secContruction} we  show the following:

\begin{theorem}\label{thm1.1}
Let $\mathcal{D}=\mathrm{span}(\xi_1,\,\xi_2 )$ be a $(2,3,5)$ distribution with derived rank $3$ distribution $[\mathcal{D},\mathcal{D}]=\mathrm{span}(\xi_1,\,\xi_2,\, [\xi_1,\xi_2]\, )$ and consider the $7$ manifold $\mathbb{T}= \mathbb{P}([\mathcal{D},\mathcal{D}]) \setminus  \mathbb{P}(\mathcal{D})$ of lines in the rank $3$ distribution transversal to the rank $2$ distribution. Then $\mathbb{T}$ carries a naturally induced Lie contact structure. The induced Lie contact structure is flat if and only if the $(2,3,5)$ distribution is flat.
\end{theorem}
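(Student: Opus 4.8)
The plan is to build the Lie contact structure on $\mathbb{T}$ by transporting the canonical Cartan geometry of the $(2,3,5)$-distribution along the inclusion $\mathfrak{g}_2\hookrightarrow\mathfrak{so}(3,4)$, and then to read the flatness statement off from the comparison of the two curvatures. Two ingredients go in. First, $\mathcal{D}$ determines a canonical regular normal parabolic geometry $(\mathcal{G}_0\to M,\omega_0)$ of type $(\mathrm{G}_2,P)$, where $P$ is the parabolic obtained by crossing the short simple root; by the theorem of Cartan recalled above, $\omega_0$ is flat exactly when the Cartan quartic $\mathcal{C}$ vanishes, i.e. exactly when $\mathcal{D}$ is flat. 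Second, $\mathfrak{g}_2$ preserves a generic three-form on $\mathbb{R}^7$, hence a metric of signature $(3,4)$, which gives an inclusion $\iota\colon\mathfrak{g}_2\hookrightarrow\mathfrak{so}(3,4)$; let $Q\subset\mathrm{SO}(3,4)$ be the contact parabolic, the stabiliser of an isotropic $2$-plane, with contact grading $\mathfrak{so}(3,4)=\mathfrak{g}_{-2}\oplus\mathfrak{g}_{-1}\oplus\mathfrak{g}_{0}\oplus\mathfrak{g}_{1}\oplus\mathfrak{g}_{2}$ where $\mathfrak{g}_{-1}\cong\mathbb{R}^{2}\otimes\mathbb{R}^{3}$ and $\mathfrak{g}_{0}\cong\mathfrak{gl}(2)\oplus\mathfrak{so}(1,2)$. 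A short computation identifies the fibre of $\mathbb{T}\to M$ with $P/R$ for $R:=\iota(P)\cap Q$: one checks $R\subset P$, $\dim R=7$ and $P/R\cong\mathbb{P}^2\setminus\mathbb{P}^1$, so that $\mathbb{T}\cong\mathcal{G}_0/R$ as bundles over $M$.

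The technical core is the algebraic statement that $\iota(\mathfrak{g}_2)+\mathfrak{q}=\mathfrak{so}(3,4)$ with $\iota(\mathfrak{g}_2)\cap\mathfrak{q}=\mathfrak{r}:=\operatorname{Lie}(R)$, together with the finer claim that $\iota$ carries the $\mathrm{G}_2$-filtration into the contact-grading filtration of $\mathfrak{so}(3,4)$ compatibly enough to force regularity below. Granting these, I would form the principal $Q$-bundle $\mathcal{G}:=\mathcal{G}_0\times_R Q\to\mathbb{T}$, of dimension $21$, and extend $\iota_*\omega_0$ to a $Q$-equivariant $\mathfrak{so}(3,4)$-valued one-form $\omega$ on $\mathcal{G}$ that reproduces the fundamental vector fields of $Q$; the displayed equalities are precisely what makes each $\omega|_u\colon T_u\mathcal{G}\to\mathfrak{so}(3,4)$ a linear isomorphism, so that $(\mathcal{G}\to\mathbb{T},\omega)$ is a Cartan geometry of type $(\mathrm{SO}(3,4),Q)$. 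Verifying that it is regular then shows that its underlying structure is a regular parabolic contact geometry of that type; unwinding the $Q$-action on $\mathfrak{g}_{-1}=\mathbb{R}^2\otimes\mathbb{R}^3$ and on $\mathfrak{g}_{-2}=\Lambda^2\mathbb{R}^2$ presents this as a contact distribution $H\subset T\mathbb{T}$ together with a splitting $H\cong\mathcal{E}\otimes\mathcal{F}$ with $\operatorname{rk}\mathcal{E}=2$, $\operatorname{rk}\mathcal{F}=3$, and a conformal metric on $\mathcal{F}$ for which the Levi bracket $\Lambda^2H\to T\mathbb{T}/H$ is the canonical one --- i.e. as a Lie contact structure in the sense of Sato--Yamaguchi. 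Replacing $\omega$ by its normalisation, which does not alter the underlying structure, yields the canonical normal Cartan connection of this Lie contact structure; note that $\omega$ itself need not be normal. One could instead write $H$, $\mathcal{E}$, $\mathcal{F}$ and the metric directly from the tautological line $\mathcal{L}\subset p^*[\mathcal{D},\mathcal{D}]$ and the iterated Lie brackets of $\mathcal{D}$ and check the contact and compatibility conditions by hand; the Cartan-geometric description is the cleanest way to see that nothing spurious has been introduced.

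For the flatness claim: if $\mathcal{D}$ is flat then $\kappa(\omega_0)=0$, and since $\omega$ restricts to $\iota_*\omega_0$ on $\mathcal{G}_0$ and is extended by the Maurer--Cartan form of $Q$, also $\kappa(\omega)=0$; hence $(\mathcal{G},\omega)$ is locally the homogeneous model $(\mathrm{SO}(3,4)\to\mathrm{SO}(3,4)/Q,\omega_{\mathrm{MC}})$ and the induced Lie contact structure is flat. The converse is the step I expect to be the main obstacle: since $\omega$ is in general not normal, nonvanishing of $\kappa(\omega)$ does not by itself obstruct flatness of the underlying structure, so one must see the obstruction in the \emph{normalised} connection. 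The plan is to track the Cartan quartic through the construction --- $\kappa(\omega)|_{\mathcal{G}_0}=\iota_*\kappa(\omega_0)$ is entirely controlled by $\mathcal{C}$ --- and, by carrying out the normalisation, to show that a nonzero $\mathcal{C}\in\Gamma(S^4\mathcal{D}^*)$ still contributes a nonzero component to the harmonic curvature of the normal Cartan connection on $\mathbb{T}$; then flatness of the Lie contact structure forces $\mathcal{C}=0$, i.e. $\mathcal{D}$ flat. An alternative, consistent with the curved-orbit analysis developed later in the paper, is to observe that the fibration $\mathbb{T}\to M$ is intrinsic to the induced structure together with its natural $\mathrm{G}_2$-reduction, so that $\mathcal{D}$ is recovered as the $(2,3,5)$-distribution on the local leaf space; a flat $\mathbb{T}$ then has a flat leaf space, hence $\mathcal{D}$ is flat.
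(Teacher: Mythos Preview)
Your overall architecture is exactly the paper's: a Fefferman-type construction using the inclusion $\mathrm{G}_2\hookrightarrow\mathrm{O}(3,4)$, with $Q=\iota^{-1}(\widetilde{P})\subset P$ (your $R$), correspondence space $\widetilde{M}=\mathcal{G}_0/Q$, extended bundle $\mathcal{G}_0\times_Q\widetilde{P}$, and the unique equivariant extension of $\omega_0$. Your identification of the fibre $P/Q$ with lines in $[\mathcal{D},\mathcal{D}]$ transverse to $\mathcal{D}$ matches Proposition~3.2.

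Two points where the paper is sharper than your sketch. First, regularity: you defer this to ``a finer claim that $\iota$ carries the $\mathrm{G}_2$-filtration into the contact-grading filtration compatibly enough''. The paper's argument (Proposition~3.1) is a one-liner: the regular normal $(\mathrm{G}_2,P)$ connection is \emph{torsion-free}, so $K$ takes values in $\Lambda^2(\mathfrak{g}/\mathfrak{p})^*\otimes\mathfrak{p}$, and one checks once that $\mathfrak{p}\subset\tilde{\mathfrak{g}}^{-1}$ under $\iota$; this forces $\widetilde{K}$ to have homogeneity $\geq 1$.

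Second, and more importantly, the converse flatness direction. You correctly flag that the extended connection need not be normal and propose to chase the Cartan quartic through a normalisation procedure, or alternatively to invoke the curved-orbit reconstruction of $\mathcal{D}$. Both would work in principle but are laborious. The paper's route is cleaner: it proves directly (Lemma~5.1) that the extended Cartan connection $\widetilde{\omega}$ \emph{is} already normal. The argument uses Kostant's description of the harmonic curvature together with the explicit location of the $(\mathrm{G}_2,P)$ lowest weight vector in $\mathfrak{g}_1\wedge\mathfrak{g}_3\otimes\mathfrak{g}_0$ to reduce the verification $\widetilde{\partial}^*\widetilde{K}=0$ to two representative elements. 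Once $\widetilde{\omega}$ is known to be normal, the curvature relation $\widetilde{K}\circ j=(\Lambda^2\varphi\otimes i')\circ K$ with $\Lambda^2\varphi\otimes i'$ injective gives the equivalence $\widetilde{K}=0\Leftrightarrow K=0$ immediately, with no normalisation step and no need to track individual curvature components. So your hedge ``$\omega$ itself need not be normal'' is, in this specific construction, unnecessary --- and recognising that is the efficient path.
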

The proof of the theorem is based on the equivalent descriptions of $(2,3,5)$-distributions and Lie contact structures, respectively, as particular types  of Cartan geometries. It employs a functorial construction that assigns to the canonical Cartan geometry encoding a $(2,3,5)$-distribution a Cartan geometry encoding a Lie contact structure.

 In Section \ref{secExamples} we use the structure equations for  a class of $(2,3,5)$ distributions (for those that are encoded in terms of functions $F=h(q)$ of a single variable $q$) to construct the corresponding Lie contact structure  explicitly in terms of a conformal symmetric rank $4$ tensor on the contact distribution. In particular, this enables us to find explicit generators for the symmetry algebras in the case that $F=\tfrac{1}{k(k-1)}q^k$.
%of $F$ and its derivatives.

In Section \ref{secReduction} we analyze  the relation between the canonical normal Cartan connections associated with the two structures. We show that the construction preserves normality, see Lemma \ref{normal}, and as a consequence:
\begin{proposition}\label{prop-hol}
 The holonomy of the normal Cartan connection associated with the induced Lie contact structure on $\mathbb{T}$ reduces to $\mathrm{G}_2$.
 \end{proposition}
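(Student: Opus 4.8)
The plan is to exhibit the induced Lie contact geometry on $\mathbb{T}$ as being obtained, via the standard extension-of-structure-group functor, from a Cartan geometry with the \emph{smaller} structure group $\mathrm{G}_2 \subset \mathrm{SO}(3,4)$, and then to use the fact that such an extension has Cartan holonomy contained in $\mathrm{G}_2$.

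First I would make the functorial construction of Section~\ref{secContruction} explicit at the level of bundles. Write $(\mathcal{G}\to M,\omega)$ for the canonical normal Cartan geometry of type $(\mathrm{G}_2,P)$ of the $(2,3,5)$-distribution. The seven-manifold $\mathbb{T}=\mathbb{P}([\mathcal{D},\mathcal{D}])\setminus\mathbb{P}(\mathcal{D})$ is the associated bundle $\mathcal{G}\times_{P}(P/P_0)$, where $P_0:=\mathrm{G}_2\cap Q\subset P$ is the seven-dimensional (non-parabolic) stabilizer of a line transversal to $\mathcal{D}$ in the flat model and $Q\subset\mathrm{SO}(3,4)$ is the parabolic defining Lie contact structures; equivalently $\mathbb{T}=\mathcal{G}/P_0$, so $\mathcal{G}\to\mathbb{T}$ is a principal $P_0$-bundle and $(\mathcal{G}\to\mathbb{T},\omega)$ is a Cartan geometry of type $(\mathrm{G}_2,P_0)$. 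The induced Cartan bundle is $\mathcal{G}'=\mathcal{G}\times_{P_0}Q\to\mathbb{T}$, and, because the inclusion $\mathfrak{g}_2\hookrightarrow\mathfrak{so}(3,4)$ satisfies $\mathfrak{g}_2+\mathfrak{q}=\mathfrak{so}(3,4)$ with $\mathfrak{g}_2\cap\mathfrak{q}=\mathfrak{p}_0$ (a root-theoretic check for this particular embedding), the connection $\omega'$ is the unique Cartan connection on $\mathcal{G}'$ that is $Q$-equivariant and agrees with $\omega$ (under $\mathfrak{g}_2\hookrightarrow\mathfrak{so}(3,4)$) on the tautological $P_0$-reduction $\mathcal{G}\hookrightarrow\mathcal{G}'$. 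By Lemma~\ref{normal} this $\omega'$ is the normal Cartan connection of the induced Lie contact structure, which is the one whose holonomy is to be computed.

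Next I would pass to the extended principal bundles. By definition the Cartan holonomy of $\omega'$ is the holonomy of the principal $\mathrm{SO}(3,4)$-connection $\hat\omega'$ on $\hat{\mathcal{G}}':=\mathcal{G}'\times_Q\mathrm{SO}(3,4)$ extending $\omega'$. Associativity of the mixing construction gives
\be
\hat{\mathcal{G}}'=\mathcal{G}'\times_Q\mathrm{SO}(3,4)=\mathcal{G}\times_{P_0}\mathrm{SO}(3,4),
\ee
which contains $\hat{\mathcal{G}}:=\mathcal{G}\times_{P_0}\mathrm{G}_2$ as a reduction of structure group to $\mathrm{G}_2$. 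Extending $\omega$ first to a principal $\mathrm{G}_2$-connection $\hat\omega$ on $\hat{\mathcal{G}}$ and then along $\mathfrak{g}_2\hookrightarrow\mathfrak{so}(3,4)$ to a principal $\mathrm{SO}(3,4)$-connection on $\hat{\mathcal{G}}\times_{\mathrm{G}_2}\mathrm{SO}(3,4)=\hat{\mathcal{G}}'$ produces a connection which, like $\hat\omega'$, restricts to $\omega$ on the copy of $\mathcal{G}$ inside $\hat{\mathcal{G}}\subset\hat{\mathcal{G}}'$. Since a principal connection is determined by its restriction to any reduction of structure group -- along the reduction, the vertical subbundle together with the tangent bundle of the reduction exhausts the tangent bundle, and the connection form is tautological on the vertical part -- the two $\mathrm{SO}(3,4)$-connections coincide. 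Hence $\hat\omega'$ is induced from the $\mathrm{G}_2$-connection $\hat\omega$ on $\hat{\mathcal{G}}$, so its holonomy, and therefore the Cartan holonomy of $\omega'$, is contained up to conjugacy in $\mathrm{G}_2$. As a cross-check, in tractor terms: the standard tractor bundle of the Lie contact structure on $\mathbb{T}$ is $\mathcal{G}\times_{P_0}\mathbb{R}^7$ with $\mathrm{G}_2\subset\mathrm{SO}(3,4)$ acting on $\mathbb{R}^7$, and it carries the parallel generic $3$-form defining $\mathrm{G}_2\subset\mathrm{SO}(3,4)$, inherited from the standard tractor bundle of the $(2,3,5)$-distribution -- and a parallel generic tractor $3$-form is precisely a reduction of the Cartan holonomy to $\mathrm{G}_2$.

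I expect the formal holonomy step to be routine; the main point requiring care is the first step, namely verifying that the construction of Section~\ref{secContruction} takes exactly the form $\mathcal{G}'=\mathcal{G}\times_{P_0}Q$ with $\omega'$ restricting to $\omega$ along $\mathcal{G}$ -- in particular that no extra $\mathfrak{q}$-component is introduced there -- together with the algebraic facts $P_0=\mathrm{G}_2\cap Q$, $P/P_0\cong\mathbb{P}^2\setminus\mathbb{P}^1$ (matching the fibre of $\mathbb{T}\to M$), and $\mathfrak{so}(3,4)=\mathfrak{q}\oplus\mathfrak{n}$ for a $P_0$-invariant complement $\mathfrak{n}\subset\mathfrak{g}_2$ of $\mathfrak{p}_0$ in $\mathfrak{g}_2$. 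All of this is contained in the construction of Section~\ref{secContruction} and in the proof of Lemma~\ref{normal}; once it is available in this form, the statement about the holonomy follows as above with no further computation.
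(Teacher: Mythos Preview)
Your proposal is correct and follows essentially the same approach as the paper: both arguments use that the Fefferman-type construction of Section~\ref{secContruction} realizes the Lie contact Cartan bundle as an extension of structure group of the $\mathrm{G}_2$-bundle $\mathcal{G}$, invoke Lemma~\ref{normal} to identify the extended connection with the \emph{normal} one, and then read off the holonomy reduction (the paper phrases the last step via the parallel tractor $3$-form, which you mention as a cross-check). One caution on notation: you write $Q$ for the Lie contact parabolic in $\mathrm{SO}(3,4)$ and $P_0$ for its intersection with $\mathrm{G}_2$, whereas the paper uses $\widetilde{P}$ and $Q$ respectively for these groups, so your $Q$ clashes with the paper's $Q$.
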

 We then  proceed to discuss, more generally, $\mathfrak{so}(3,4)$-Lie contact structures  endowed with a holonomy reduction to $\mathrm{G}_2$. We show the following (see also Theorem \ref{thmHolred}):
 %such a holonomy reduction determines a distinguished rank $2$ distribution $\mathcal{V}$ on an open dense subset  $\widetilde{M}^o$ of the $7$-manifold $\widetilde{M}$. 
  \begin{theorem}\label{thm1.2}
  A holonomy reduction to $\mathrm{G}_2$ determines a distinguished rank $2$ distribution $\mathcal{V}$ on an open dense subset  $\widetilde{M}^o$ of the $7$-manifold $\widetilde{M}$.
  If the curvature of the Cartan connection of the Lie contact structure annihilates the rank $2$ distribution $\mathcal{V}$, then $\mathcal{V}$ is integrable and in a neighbourhood of each point in $\widetilde{M}^o$ one can form a local $5$-dimensional leaf space, which carries an induced  $(2,3,5)$-distribution. Moreover, if $\widetilde{M}^o$ is a proper subset of the $7$-manifold $\widetilde{M}$, then the complement carries an induced parabolic contact structure associated with the Lie algebra $\g_2$.
  \end{theorem}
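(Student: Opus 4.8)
The plan is to apply the general machinery of holonomy reductions of Cartan geometries and the associated curved orbit decomposition of \v{C}ap, Gover, and Hammerl. A Lie contact structure on the $7$-manifold $\widetilde{M}$ is a regular normal parabolic geometry $(\mathcal{G}\to\widetilde{M},\omega)$ of type $(G,P)$, with $G$ a Lie group with Lie algebra $\mathfrak{so}(3,4)$ and $P\subset G$ the stabiliser of an isotropic $2$-plane in $\bbR^{3,4}$. A reduction of the Cartan holonomy to $H=\mathrm{G}_2\subset G$ amounts to an $H$-reduction $\mathcal{G}_H\subseteq\mathcal{G}$ of the Cartan bundle, equivalently to a parallel section of the associated bundle with fibre $G/H$. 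Such a reduction decomposes $\widetilde{M}$ into curved orbits, one for each $H$-orbit on the homogeneous model $G/P$, and on a curved orbit $\widetilde{M}_i$ the data $(\mathcal{G}_H,\omega)$ restrict to a Cartan geometry of type $(H,H\cap P_i)$, where $P_i$ is the stabiliser of a point in the corresponding $H$-orbit. The first, purely algebraic, step is therefore to work out the $\mathrm{G}_2$-orbits on the Grassmannian $G/P$ of isotropic $2$-planes in the standard split $\mathrm{G}_2$-module $\bbR^{3,4}$, together with the isomorphism types of the stabilisers $H\cap P_i$.

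Carrying out a case analysis of an isotropic $2$-plane $E=\Span(u,v)$ in terms of the $\mathrm{G}_2$ cross product --- distinguishing, for example, according to whether $E$ together with $u\times v$ spans an isotropic or a non-isotropic subspace --- one finds exactly two orbits: an open orbit $\mathcal{O}$ and its complement, the closed orbit $\mathcal{Z}=G/P\setminus\mathcal{O}$, which has codimension two. A dimension count on the open orbit, $\dim(H\cap P)=14-7=7$, shows that $H\cap P$ is a proper subgroup of the $9$-dimensional parabolic $\q_1\subset\mathrm{G}_2$ associated with $(2,3,5)$-distributions, with $\q_1/(H\cap P)$ two-dimensional; the $2$-dimensional $(H\cap P)$-submodule $\q_1/(\mathfrak{h}\cap\p)$ of the model tangent space $\mathfrak{h}/(\mathfrak{h}\cap\p)$ is what induces the distinguished rank $2$ distribution $\mathcal{V}$ on the open curved orbit $\widetilde{M}^o:=\widetilde{M}_{\mathcal{O}}$, which is open and dense because $\mathcal{O}$ is. On the closed orbit the corresponding count gives $\dim(H\cap P_i)=14-5=9$, and one checks that $H\cap P_i$ is precisely the contact parabolic $\q_2\subset\mathrm{G}_2$.

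For the open-orbit claims I would proceed as follows. Involutivity of $\mathcal{V}$: expressing the Lie bracket of two $\mathcal{V}$-valued frame fields via the structure equation $\der\omega=-\tfrac12[\omega,\omega]+\kappa$, its class modulo $\mathfrak{h}\cap\p$ is the sum of the algebraic $\mathfrak{h}$-bracket of the corresponding elements of $\q_1$, which lies back in $\q_1/(\mathfrak{h}\cap\p)$ because $\q_1$ is a subalgebra, and a curvature term, which vanishes by hypothesis; hence $\mathcal{V}$ is integrable. Locally one then forms the leaf space $M:=\widetilde{M}^o/\mathcal{V}$, a smooth $5$-manifold. Extending the structure group of $\mathcal{G}_H|_{\widetilde{M}^o}$ from $H\cap P$ to $\q_1$ yields a principal $\q_1$-bundle, and the hypothesis that $\kappa$ annihilates $\mathcal{V}$ is exactly what makes the extended $\mathfrak{h}$-valued form the pullback of a form on $M$; the outcome is a Cartan geometry of type $(\mathrm{G}_2,\q_1)$ on $M$. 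One then checks that this geometry is regular and normal, so that --- by the equivalence of categories for parabolic geometries of this type --- it is a genuine $(2,3,5)$-distribution: the underlying rank $2$ distribution on $M$ inherits the $(2,3,5)$-genericity (this is forced by lying over the \emph{open} orbit), and the descended curvature still lies in the kernel of the Kostant codifferential $\partial^*$, so that the descended connection coincides with the canonical normal one. For the last assertion, on the closed orbit $\widetilde{M}_{\mathcal{Z}}$ the induced Cartan geometry is of type $(\mathrm{G}_2,\q_2)$; as $\q_2$ is the contact parabolic of $\mathrm{G}_2$, verifying regularity and normality of this geometry identifies $\widetilde{M}_{\mathcal{Z}}$ with a $\mathrm{G}_2$-contact structure.

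I expect the main obstacle to be twofold. On the algebraic side, the precise description of the two $\mathrm{G}_2$-orbits on $G/P$ and of their stabilisers --- in particular verifying $H\cap P\subset\q_1$ in the open case and $H\cap P=\q_2$ in the closed case --- requires a careful analysis of isotropic $2$-planes relative to the $\mathrm{G}_2$-structure. On the analytic side, the delicate point is to show that the single hypothesis ``$\kappa$ annihilates $\mathcal{V}$'' simultaneously forces involutivity of $\mathcal{V}$ and a well-defined descent of $\omega$, and --- hardest of all --- that normality survives the passage to the leaf space: since normality is the pointwise condition $\partial^*\kappa=0$, one has to track how the harmonic curvature transforms under the quotient $\widetilde{M}^o\to M$ and confirm that the curvature hypothesis kills exactly the components that would otherwise obstruct $\partial^*(\text{descended }\kappa)=0$.
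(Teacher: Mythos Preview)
Your overall strategy---curved orbit decomposition for the holonomy reduction to $\mathrm{G}_2$, identification of the two $\mathrm{G}_2$-orbits on the isotropic Grassmannian and their stabilisers $Q\subset P$ (the $(2,3,5)$ parabolic) and $\bar P$ (the $\mathrm{G}_2$-contact parabolic), followed by correspondence-space descent on the open orbit---is exactly the paper's approach. Two points, however, deserve correction.

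First, you overreach on normality. The statement only asserts that the leaf space carries a $(2,3,5)$-distribution and that the closed orbit carries a $\mathrm{G}_2$-contact structure; for both conclusions one needs only \emph{regularity} of the induced Cartan geometries, not normality. Any regular parabolic geometry of the relevant type has an underlying structure of the claimed kind; normality is about the Cartan connection being the canonical one. The paper proves regularity on both orbits and explicitly defers normality on the leaf space to future work. Your plan to ``confirm that the curvature hypothesis kills exactly the components that would otherwise obstruct $\partial^*(\text{descended }\kappa)=0$'' is therefore not part of this theorem, and attempting it here would take you well beyond what is required.

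Second, your regularity argument on the open orbit is too quick. Saying the $(2,3,5)$-genericity ``is forced by lying over the open orbit'' is not enough: one must show that the descended curvature has homogeneity $\geq 1$ with respect to the $(2,3,5)$ filtration. The paper's argument is this. From regularity and normality of the ambient Lie contact geometry one knows the curvature takes values in $\Lambda^2(\tilde\g/\tilde\p)^*\otimes\tilde\g^{-1}$, hence the reduced curvature $K^o$ on $\widetilde M^o$ takes values in $\Lambda^2(\g/\q)^*\otimes(\tilde\g^{-1}\cap\g)$. Comparing the gradings \eqref{sograd} and \eqref{g2grad} gives $\tilde\g^{-1}\cap\g=\g_{-3}\oplus\g_{-1}\oplus\p$, which is a $Q$-module but \emph{not} a $P$-module. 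Now the hypothesis $K^o(X,\cdot)=0$ for $X\in\p$ upgrades $K^o$ to a $P$-equivariant function, and this forces the $\g_{-3}$ component to vanish: if some $K^o(u)(X,Y)$ had nonzero $\g_{-3}$ part, acting by a suitable $g\in\exp(\g_1)\subset P$ would produce a nonzero $\g_{-2}$ part at $u\cdot g^{-1}$, contradicting the established range. Hence $K^o$ lands in $\Lambda^2(\g/\p)^*\otimes\g^{-1}$, which is regularity. On the closed orbit the argument is simpler, since $\tilde\g^{-1}\cap\g=\bar\g^{-1}$ for the contact grading, giving regularity directly.
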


Our work combines  two approaches: a conceptual one based on theory of parabolic geometries, and explicit calculations in terms of exterior differential systems. The presentation of this article focusses mostly on the parabolic geometry part of our work.
 
 \subsubsection*{Acknowledgements} We thank Gil Bor and Jan Gutt for   reading parts of the paper and useful comments, and Arman Taghavi-Chabert and Vojt\v{e}ch \v{Z}\'adn\'ik for helpful discussions.

\section{Algebraic and geometric background}
A first step to understanding the construction from $(2,3,5)$-distributions to Lie contact structures is to understand the relationship between the homogeneous models of the two structures. 
%In particular, we shall need some background material on the exceptional Lie group $\mathrm{G}_2$.
 Besides presenting the algebra behind the construction, we  further discuss the geometric structures we are interested in.

\subsection{Split octonions and $\mathrm{G}_2$}
The exceptional complex simple Lie algebra $\g_2^{\mathbb{C}}$ has two real forms: the split real form and the compact real form. In the present paper we will be concerned with the split real form $\g_2$ and the corresponding (connected) Lie group $\mathrm{G}_2$ that can be defined  as the automorphism group of the \emph{split octonions} $(\mathbb{O}', \cdot)$. 

An algebra $(\mathcal{A},\cdot)$ with unit $1$
 together with a non-degenerate quadratic form $N$ that is multiplicative in the sense that
$$N(X \cdot Y)=N(X) N(Y)$$
is called a composition algebra. 
There are, up to isomorphism, precisely two $8$-dimensional real composition algebras: the octonions $\mathbb{O}$ and the split octonions $\mathbb{O}'$. The two can be distinguished by the signature of their quadratic forms. The split octonions  are the unique $8$-dimensional real  composition algebra with quadratic form $N:\mathbb{O}'\to\mathbb{R}$ of signature $(4,4)$.

Given a composition algebra, there is a notion of conjugation
$\bar{X}=2\left\langle X, 1\right\rangle 1- X,$
where $\left\langle\; , \;\right\rangle$ denotes the bilinear form determined by $N$ via polarization. 
The space of imaginary split octonions is then defined as 
\[\mathbb{V}=\mathrm{Im}\mathbb{O}'=\{ X\in\mathbb{O}': \bar{X}=- X\}=1^{\perp}.\] 
 Since the unit $1$ has norm one, $\left\langle\, ,\, \right\rangle$ restricts to a bilinear form $H$ of signature $(3,4)$ on $\mathbb{V}$. One can further define a   $3$-form $\Phi\in\Lambda^3\mathbb{V}^*$   as
 \[\Phi(X,Y,Z):=\left\langle X\cdot Y, Z\right\rangle= H(X\times Y,Z),\]
 where $$X\times Y=X\cdot Y +\left\langle X,Y\right\rangle 1$$ denotes the split octonionic cross product on $\mathbb{V}$.
 %= \mathrm{Im}(X\cdot Y)=X\cdot Y -\left\langle X\cdot Y,1\right\rangle 1
 Since an algebra automorphism of a composition algebra preserves the corresponding bilinear form, $\mathrm{G}_2$ preserves all these data. Indeed, it is known that $\mathrm{G}_2$ is precisely the stabilizer of $\Phi$ in $GL(\mathbb{V}),$ 
 and the representation on $\mathbb{V}$ defines an inclusion  $\mathrm{G}_2\hookrightarrow \mathrm{O}(H)=\mathrm{O}(3,4).$
 
\subsection{Explicit matrix presentations of $\g_2$ and $\mathfrak{so}(3,4)$}\label{matrices}
Here we will present an explicit matrix realization of the inclusion
\begin{align}
\mathfrak{g}_2\hookrightarrow \mathfrak{so}(3,4).
\end{align}
 Let $e_1,\cdots,e_7$ be a basis for $\mathbb{V}$ with dual basis $e^1,\cdots,e^7$, i.e. $e^i(e_j)=\delta_{ij}$. Consider the bilinear form 
\begin{align}\label{bilinearform}
H=-2 e^1 e^7-2 e^2 e^6-2 e^3 e^5 -e^4e^4,
\end{align} 
%of signature $(3,4)$, 
defining 
%the following matrix presentation 
 \begin{align}\label{somatrices}
\mathfrak{so}(3,4)=\left\{\begin{pmatrix}
   a^7&-a^3&-a^6&a^{11}&-a^{16}&a^{19}&0\\
    -a^{17}&a^{10}&a^9&a^{15}&-a^{20}&0&-a^{19}\\
    -a^{14}&a^8&a^{13}&a^{18}&0&a^{20}&a^{16}\\
    a^{12}&a^5&a^2&0&-a^{18}&-a^{15}&-a^{11}\\
    -a^{4}&-a^0&0&-a^2&-a^{13}&-a^{9}&a^{6}\\
    a^{1}&0&a^0&-a^5&-a^{8}&-a^{10}& a^{3}\\
    0&-a^1&a^4&-a^{12}&a^{14}&a^{17}&-a^7
    \end{pmatrix},\ a^0,\cdots,a^{20}\in\mathbb{R} \right\}.
\end{align} \\
 Then the subalgebra of $\mathfrak{so}(3,4)$ preserving the $3$-form 
\begin{align}\label{3-form}
\Phi= 2e^1\wedge e^4\wedge e^7+e^1\wedge e^5\wedge e^6+8 e^2\wedge e^3\wedge e^7-2 e^2\wedge e^4\wedge e^6- 2 e^3\wedge e^4 \wedge e^5\end{align}
is the exceptional Lie algebra
%a matrix presentation  of the Lie algebra $\g_2$,
\begin{align}\label{g2matrices}
\g_2=\left\{\begin{pmatrix}
   -q^1-q^4& -2b^6&-12b^5&-2q^5&q^6&-6q^7&0\\
   -\frac{1}{2}b^3&-q^4&6q^2&-6b^5&\frac{1}{2}q^5&0&6q^7\\
   -\frac{1}{12}b^4&\frac{1}{3}q^3&-q^1&b^6&0&-\frac{1}{2}q^5&-q^6\\
   \frac{1}{3}b^0&-\frac{1}{3}b^4&2b^3&0&-b^6&6b^5&2q^5\\
   -b^1&-\frac{2}{3}b^0&0&-2b^3&q^1&-6q^2&12b^5\\
   \frac{1}{6}b^2&0&\frac{2}{3}b^0&\frac{1}{3}b^4&-\frac{1}{3}q^3&q^4&2b^6\\
   0&-\frac{1}{6}b^2&b^1&-\frac{1}{3}b^0&\frac{1}{12}b^4&\frac{1}{2}b^3&q^1+q^4\\
\end{pmatrix},\ b^0,\cdots,q^7\in\mathbb{R}\right\}.
\end{align}
 \subsection{ Parabolic subalgebras
  of $\g_2$ and $\mathfrak{so}(3,4)$
  }\label{parabolics}
 %One possible definition of a parabolic subalgebra of a semisimple Lie algebra $\g$  is the following:
 A subalgebra $\p \subset\g$  of a semisimple Lie algebra $\g$ is a \emph{parabolic subalgebra} if and only if its maximal nilpotent ideal $\p_+$ coincides with the orthogonal complement $\p^{\perp}$  of $\p$ in $\g$ with respect to the
Killing form. 
In particular, this yields an identification $(\g/\p)^*\cong\p_+$.
%The quotient $ \p/\p_+$ is then a reductive Lie algebra. 
A parabolic subalgebra $\p$ determines a filtration 
\begin{align*}
\g=\g^{-k}\supset\cdots\supset\g^0\supset\cdots\supset\g^{k},
\end{align*}
 $[\g^i,\g^j]\subset\g^{i+j}$,\
where $\g^0=\p$, $\g^1=\p^{\perp}$, $\g^i=[\g^1,\g^{i-1}]$ and $\g^{-i}=(\g^i)^{\perp}$ for $i>1$. For a choice of (reductive)  subalgebra $\g_0\subset\p$ isomorphic to $\p/{\p}_+$, called a Levi subalgebra, the filtration splits 
%(into eigenspaces of the so-called grading element) 
which determines  a   grading of the Lie algebra 
\begin{align*}
\g=\g_{-k}\oplus\cdots\oplus\g_0\oplus\cdots\oplus\g_k,
\end{align*}
such that $[\g_i,\g_j ]\subset\g_{i+j}$ and $\g_{-1}$ generates $\g_{-k}\oplus\cdots\oplus\g_{-1}$.
Conversely, given such a grading, 
\begin{align*}
\p:=\g_0\oplus\cdots\oplus\g_k
\end{align*}
defines a parabolic subalgebra, and the filtration can be recovered from the grading as $\g^i=\g_{i}\oplus\cdots\oplus\g_k$.
%Thus, we can equivalently define parabolic subalgebras that way. 
We will now discuss the parabolic subalgebras of $\g=\g_2$ and $\tilde{\g}=\mathfrak{so}(3,4)$ that are relevant for this paper.
 
% \begin{example}
 Consider $\tilde{\g}=\mathfrak{so}(3,4)$ in the matrix presentation \eqref{somatrices}. Let  $\tilde{\p}\subset\mathfrak{so}(3,4)$ be a parabolic subalgebra defined as the stabilizer of a totally null $2$-plane $\mathbb{E}$ with respect to $H$ as in  $\eqref{bilinearform}$.
It preserves the filtration 
 \begin{align}\label{sofilt}\widetilde{\mathbb{V}}^{-1}\supset \widetilde{\mathbb{V}}^{0}\supset\widetilde{\mathbb{V}}^{1},\end{align}
of the standard representation, where $\widetilde{\mathbb{V}}^1=\mathbb{E}$, $\widetilde{\mathbb{V}}^0=(\mathbb{E})^{\perp}$, $\widetilde{\mathbb{V}}^{-1}=\mathbb{V},$
which is related to the filtration 
 $$\tilde{\g}^{-2}\supset\tilde{\g}^{-1}\supset\tilde{\g}^0\supset\tilde{\g}^{1}\supset\tilde{\g}^{2}$$
 determined by $\tilde{\p}$ by  $\tilde{\g}^i=\tilde{\g}\cap L(\widetilde{\mathbb{V}}^j,\widetilde{\mathbb{V}}^{k+i})$. 
 Any two parabolic subalgebras of $\mathfrak{so}(3,4)$ defined as stabilizers of distinct totally null $2$-planes are conjugated to each other by an inner automorphism of $\mathfrak{so}(3,4)$. Hence modulo  conjugation,  they define the same parabolic subalgebra and induced filtration. However, the  parabolics may be different concerning their  position relative to the given subalgebra $\g_2\subset\mathfrak{so}(3,4)$. This observation will be relevant for the purpose of this article.

 Let us first choose the totally null plane $\mathbb{E}'=\mathrm{span}(e_1,e_2)$, where $e_1,\cdots, e_7$ denotes the basis of $\mathbb{V}$ as in Section \ref{matrices}. Then $\tilde{\p}$ consists of upper block triangular matrices. The subalgebra $\tilde{\g}_0\subset\tilde{\p}$  of block diagonal matrices  is a Levi subalgebra  isomorphic to $\mathfrak{gl}(2,\mathbb{R})\oplus\mathfrak{so}(1,2)$. It corresponds to the grading
 \begin{align}\label{sograd1}
  \begin{pmatrix}
      \tilde{\g}_0   & \vline &  \tilde{\g}_1 & \vline &   \tilde{\g}_2 \\
  \hline
   \tilde{\g}_{-1}  & \vline &   \tilde{\g}_0 & \vline &   \tilde{\g}_1 \\
  \hline
   \tilde{\g}_{-2}  & \vline  &  \tilde{\g}_1& \vline &  \tilde{\g}_0\\
  \end{pmatrix}
   \begin{pmatrix}
     \widetilde{\mathbb{V}}_1\\
      \hline
     \widetilde{\mathbb{V}}_0\\
       \hline
     \widetilde{\mathbb{V}}_{-1}\\
     \end{pmatrix},
\end{align}
where the  splitting $\widetilde{\mathbb{V}}_1\oplus\widetilde{\mathbb{V}}_0\oplus{\widetilde{\mathbb{V}}_{-1}}$ of the filtration of $\mathbb{V}$ is given by  $\widetilde{\mathbb{V}}_1=\mathbb{E}'$, $\widetilde{\mathbb{V}}_0=\mathrm{span}(e_3,e_4,e_5)$ and $\widetilde{\mathbb{V}}_{-1}=\mathrm{span}(e_6,e_7)$ ($\widetilde{\mathbb{V}}_1$ is the defining representation for the $\mathfrak{gl}(2,\mathbb{R})$-summand and $\widetilde{\mathbb{V}}_0$  for the $\mathfrak{so}(1,2)$-summand of $\tilde{\g}_0$).
 %, and a splitting of the filtration of $\g$.
% \end{example}
%$$ MATRICES WITH BLOCKS AND STANDARD REP$$

Next we choose a different totally null $2$-plane, $\mathbb{E}=\mathrm{span}(e_2,e_3)$. Looking at the explicit form of the $3$-form $\Phi$ as in \eqref{3-form}, we immediately notice that while ${\mathbb{E}'}\hook\Phi={e_1}\hook {e_2}\hook\Phi=0$, this is not true for the new $2$-plane as  $e_2\hook e_3\hook\Phi= 8 e^7\neq 0$. So inserting the $2$-plane $\mathbb{E}$ into $\Phi$ we obtain the line in $\mathbb{V}^*$ spanned by $e^7$, or using the isomorphism $\mathbb{V}^*\cong\mathbb{V}$ induced by the metric $H$, the line in $\mathbb{V}$ spanned by $e_1$. The grading of $\tilde{\g}$ and corresponding  splitting $\widetilde{\mathbb{V}}_1=\mathrm{span}(e_2,e_3)$,  $\widetilde{\mathbb{V}}_0=\mathrm{span}(e_1,e_4,e_7)$,  $\widetilde{\mathbb{V}}_{-1}=\mathrm{span}(e_5, e_6)$ in this unusual form look as follows:
\begin{align}\label{sograd}
\begin{pmatrix}
  \tilde{\g}_0&\vline&\tilde{\g}_{-1}  &\vline&\tilde{\g}_0   &\vline&\tilde{\g}_1   &\vline&0  \\
    \hline
    \tilde{\g}_{1}&\vline&\tilde{\g}_0  &\vline& \tilde{\g}_1 &\vline&\tilde{\g}_2 &\vline&\tilde{\g}_1  \\
    \hline
   \tilde{\g}_{0} &\vline &\tilde{\g}_{-1}&\vline&\tilde{\g}_0 &\vline&\tilde{\g}_1 &\vline&\tilde{\g}_0  \\
    \hline
    \tilde{\g}_{-1}&\vline &\tilde{\g}_{-2}&\vline&\tilde{\g}_{-1} &\vline& \tilde{\g}_0&\vline&\tilde{\g}_{-1} \\
    \hline
    0&\vline&\tilde{\g}_{-1} &\vline& \tilde{\g}_{0} &\vline&\tilde{ \g}_{1}&\vline&\tilde{ \g}_0& \\
\end{pmatrix}
\begin{pmatrix}
    \widetilde{\mathbb{V}}_0\\
    \hline
    \widetilde{\mathbb{V}}_1\\
    \hline
    \widetilde{\mathbb{V}}_0\\
    \hline
   \widetilde{ \mathbb{V}}_{-1}\\
    \hline
   \widetilde{ \mathbb{V}}_0\\
\end{pmatrix}
\end{align}

Let us now discuss parabolic subalgebras of  $\g_2$; there are, up to conjugation by inner automorphisms of $\g_2$, three of them. Consider $\g=\g_2$ in the matrix representation \eqref{g2matrices}. Let  $\mathfrak{p}\subset \g$ 
 be  the stabilizer of a null-line $\ell$;  we take  the line $ \ell=\mathbb{R} e_1\subset \mathbb{V}$ through the first basis vector $e_1$. 
 (${\mathrm{G}_2}$ acts transitively on  null lines, see e.g. \cite{bryant-exceptional}, and thus different choices lead to conjugated subalgebras). 
 Since $\mathfrak{p}$ preserves $\ell$ and $\Phi$, it also preserves the filtration
\begin{align}\label{filtration}
\mathbb{V}^{-2}\supset \mathbb{V}^{-1}\supset\mathbb{V}^0\supset\mathbb{V}^{1}\supset\mathbb{V}^{2},
\end{align} 
where  $\mathbb{V}^2=\ell=\mathrm{span}(e_1)$, $\mathbb{V}^1=\{ Y\in\mathbb{V}: Y\hook X\hook\Phi =0\ \forall X\in\ell\   \}=\mathrm{span}(e_1,e_2,e_3),$ $\mathbb{V}^0={\mathbb{V}^{-1}}^{\perp}=\mathrm{span}(e_1,e_2,e_3,e_4)$ and $\mathbb{V}^1={\mathbb{V}^{-2}}^{\perp}=\mathrm{span}(e_1,e_2,e_3,e_4,e_5,e_6),$ $\mathbb{V}^{-2}=\mathbb{V}$. Hence $\p$ is an upper block triangular matrix. The filtration of $\mathbb{V}$ and the filtration
$$\g^{-3}\supset\g^{-2}\supset\g^{-1}\supset\g^0\supset\g^1\supset\g^2\supset \g^{3}$$
determined by the parabolic $\p$ are related as $\g^i=\g\cap L(\mathbb{V}^j,\mathbb{V}^{k+i})$.
A choice of Levi subalgebra $\g_0\cong\p/\p^{\perp}\cong \mathfrak{gl}(2,\mathbb{R})$ determines a splitting of the filtration of $\g$, and of $\mathbb{V}$, and vice versa. 
The Levi subalgebra of block diagonal matrices corresponds to the splitting depicted below:
\begin{align}\label{g2grad}
\begin{pmatrix}
  \g_0&\vline&\g_1  &\vline&\g_2   &\vline&\g_3   &\vline&0  \\
    \hline
    \g_{-1}&\vline&\g_0  &\vline& \g_1 &\vline&\g_2 &\vline&\g_3  \\
    \hline
   \g_{-2} &\vline &\g_{-1}&\vline&\g_0 &\vline&\g_1 &\vline&\g_2  \\
    \hline
    \g_{-3}&\vline &\g_{-2}&\vline&\g_{-1} &\vline& \g_0&\vline&\g_1 \\
    \hline
    0&\vline&\g_{-3} &\vline& \g_{-2} &\vline& \g_{-1}&\vline& \g_0& \\
\end{pmatrix}
\begin{pmatrix}
    \mathbb{V}_2\\
    \hline
    \mathbb{V}_1\\
    \hline
    \mathbb{V}_0\\
    \hline
    \mathbb{V}_{-1}\\
    \hline
    \mathbb{V}_{-2}\\
\end{pmatrix}
\end{align}

The other maximal parabolic subalgebra $\bar{\p}$ is the stabilizer in $\g$ of a totally null  $2$-plane    $\mathbb{E}'\subset\mathbb{V}$ such that  $X\hook Y\hook\Phi= 0$ for all $X,Y\in\mathbb{E}'$ (see e.g. \cite{LandsMan}). Let us take  $\mathbb{E}'=\mathrm{span}(e_1,e_2)$. Then, as in the case of the special orthogonal algebra discussed earlier, the parabolic subalgebra is an upper block triangular matrix, and   the induced filtration of $\g$ is of the form
\begin{align}\label{g2bargrad}
\bar{\g}^{-2}\supset\bar{\g}^{-1}\supset\bar{\g}^0\supset\bar{\g}^{1}\supset\bar{\g}^{2},
\end{align}
where $\bar{\g}^{i}=\g\cap\tilde{\g}^{i}$ for the filtration of $\tilde{\g}$ determined by  \eqref{sograd1}.

Finally there is the Borel subalgebra $\mathfrak{b}$ of $\g$; it 
%is the intersection $B=P_1\cap P_2$ and 
is the stabilizer of a filtration $\ell\subset\mathbb{E}'$ of a null line contained in a totally null $2$-plane such that $X\hook Y\hook\Phi= 0$ for all $X,Y\in\mathbb{E}$.

We shall use an analogous notation for the parabolic subgroups appearing in the course of this paper: $\widetilde{P}\subset \mathrm{O}(3,4)$ denotes the stabilizer of a totally null $2$-plane in the standard representation $\mathbb{V},$ $P\subset \mathrm{G}_2$ denotes the stabilizer of a null-line $\ell\subset\mathbb{V}$, $\bar{P}\subset \mathrm{G}_2$ denotes the stabilizer of a totally null $2$-plane  that inserts trivially into the defining $3$-form $\Phi$ for $\mathrm{G}_2$, and $B\subset \mathrm{G}_2$ denotes the stabilizer of a null-line contained in a  null $2$-plane of this type. For reasons that will become clear later, we will call $\widetilde{P}\subset \mathrm{SO}(3,4)$ the \emph{Lie contact parabolic}, $P\subset\mathrm{G}_2$ the \emph{(2,3,5) parabolic} and $\bar{P}\subset\mathrm{G}_2$ the \emph{$\mathrm{G}_2$ contact parabolic}.
  
 \subsection{Parabolic geometries} \label{parabolic}
Here we provide a very brief summary of basic notions from parabolic geometry, mostly to set notation. For a comprehensive introduction to parabolic geometries see \cite{cap-slovak-book}. See also \cite{tanaka} and \cite{yama-simple} for additional information.

A \emph{Cartan geometry} of type $(G,P)$ is given by 
\begin{itemize}
\item a principal bundle $\mathcal{G}\to M$ with structure group $P$,
\item and a Cartan connection  $\omega\in\Omega^1(\mathcal{G},\g)$, i.e., a $P$-equivariant Lie algebra valued $1$-form such that $\omega(u)(\zeta_{X})=X$ for all fundamental vector fields $\zeta_{X}$ and $\omega(u): T_u\mathcal{G}\to\g$ is a linear isomorphism.
\end{itemize}
The curvature of a Cartan connection $\omega$  is the $2$-form in $\Omega^2(\mathcal{G},\g)$ defined as
\[K(\xi,\eta)=d\omega(\xi,\eta)+[\omega(\xi),\omega(\eta)],\]
for $\xi,\eta\in\mathfrak{X}(\mathcal{G})$. It is $P$-equivariant and horizontal, and thus equivalently encoded in the  curvature function $K:\mathcal{G}\to\Lambda^2(\g/\p)^*\otimes\g$  given by
\[K(u)(X,Y)=d\omega\big(\omega^{-1}(u)(X),\omega^{-1}(u)(Y)\big)+[X,Y].\]
It is one of the basic results about Cartan connections that the curvature of a Cartan geometry vanishes, i.e. the geometry is flat, if and only if it is locally equivalent to $G\to G/P$ equipped with the Maurer Cartan form $\omega_{G}$. The latter geometry is referred to as the (homogeneous) model.

A Cartan geometry of type $(G,P)$ is  called a \emph{parabolic geometry} if $\g$ is a semisimple Lie algebra and $P\subset G$ a parabolic subgroup, i.e.,  a closed subgroup with Lie algebra a parabolic subalgebra  $\p\subset \g$. Given a principal bundle $P\hookrightarrow\mathcal{G}\to M$ and Lie algebra $\g$ there are a priori  several choices of Cartan connections $\omega\in\Omega^1(\mathcal{G},\mathfrak{g})$.
In pioneering work Tanaka established the following curvature conditions  that pin down the Cartan connection uniquely: A parabolic geometry is called
\begin{itemize}
\item  \emph{regular} if the curvature  $K$ is of homogeneity $\geq 1$, i.e.,
%\begin{align*}
$K(u)(X,Y)\subset \g^{i+j+1}$
%\end{align*}
for all $X\in\g^i$, $Y\in\g^{j}$ and $u\in\mathcal{G}$,
%. It is called 
\item \emph{normal} if 
%\begin{align*}
$\partial^*\circ K=0,$
%\end{align*}
where 
%\begin{align*}
$\partial^*:\Lambda^2(\g/\p)^*\otimes\g\to(\g/\p)^*\otimes\g$
%\end{align*}
 is  the ($P$-equivariant)  Kostant codifferential. Identifying $(\g/\p)^*=\p_{+}$ via the Killing form, it is the boundary operator  computing the Lie algebra homology $H_*(\p_+,\g),$ given on a decomposable element as 
 \begin{align}\label{KostantCo}
 \partial^*(Z_0\wedge Z_1\otimes A)=Z_0\otimes [Z_1,A]-Z_1\otimes[Z_0,A]-[Z_0,Z_1]\otimes A.
 \end{align}
\end{itemize}
Projecting the curvature $K$ of a regular, normal parabolic geometry to $\mathbb{H}^2:=\mathrm{ker}(\partial^*)/\mathrm{Im}(\partial^*)$ gives the \emph{harmonic curvature} $K_{H}$, which is the fundamental curvature quantity of a regular, normal parabolic geometry.

\subsection{$(2,3,5)$-distributions}
A $(2,3,5)$-distribution $\mathcal{D}\subset TM$ is a rank $2$ distribution on a $5$-manifold that is bracket generating in a minimal number of steps, i.e.
$$[\mathcal{D},[\mathcal{D},\mathcal{D}]]= T M.$$
It follows immediately from the definition that the weak derived  flag $\mathcal{D}\subset[\mathcal{D},\mathcal{D}]\subset T M$ is a sequence of nested  bundles of ranks $2$, $3$ and $5$. 

So, by definition, $(2,3,5)$-distributions are 
in a sense 
opposite to integrable distributions, and
 they are different in character.
While integrable rank $2$-distributions in dimension $5$ are all locally equivalent, $(2,3,5)$-distributions have local invariants. A solution to the
% problem how to determine when two $(2,3,5)$-distributions are locally equivalent 
 local equivalence problem
 was established in Cartan's influential 1910 paper \cite{cartan-cinq}. His work also shows that the symmetry algebra of a $(2,3,5)$ distributions is finite-dimensional; for the most symmetric of these distribution it is the  simple Lie algebra $\g_2$. 

Note that a relationship to $\g_2$ can be seen immediately: If one looks at the symbol algebra of a $(2,3,5)$-distribution, i.e. the associated graded of the derived flag
%$\mathcal{D}_x\oplus [\mathcal{D},\mathcal{D}]_x/\mathcal{D}_x\oplus T _xM/[\mathcal{D},\mathcal{D}]_x$ 
together with the tensorial bracket $\mathcal{L}$ induced by the Lie bracket of vector fields, then this is at each point  a nilpotent Lie algebra isomorphic to the negative part of the grading \eqref{g2grad} of $\g_2$.

Indeed (by Tanaka theory, or Cartan's equivalence method):
\begin{theorem}
There is an equivalence of categories between $(2,3,5)$-distributions and parabolic geometries of type $(\mathrm{G}_2,P)$, where $P\subset \mathrm{G}_2$ is the parabolic subgroup defined as the stabilizer of a null-line in the $7$-dimensional irreducible representation of $\mathrm{G}_2$.
\end{theorem}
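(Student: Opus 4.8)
The plan is to establish the equivalence of categories via Tanaka's prolongation procedure, as applied to parabolic geometries in the style of Morimoto and \v{C}ap--Slov\'ak. The key structural fact, already noted in the excerpt, is that the symbol algebra of a $(2,3,5)$-distribution at each point is isomorphic to the negative part $\mathfrak{m}=\g_{-3}\oplus\g_{-2}\oplus\g_{-1}$ of the grading \eqref{g2grad} of $\g_2$, equipped with the bracket induced by the Lie bracket of vector fields. Since the grading of $\g_2$ associated with the $(2,3,5)$ parabolic $P$ has the property that $\g_{-1}$ generates $\mathfrak{m}$ as a Lie algebra, and that $\mathfrak{m}$ is fundamental (the dimensions $2,3,5$ being forced), a $(2,3,5)$-distribution is precisely a \emph{regular infinitesimal flag structure} of type $(\mathrm{G}_2,P)$: the filtration $\mathcal{D}\subset[\mathcal{D},\mathcal{D}]\subset TM$ with its Levi bracket is pointwise modeled on $\mathfrak{m}$, and because $\g_0$ acts on $\g_{-1}$ as all of $\mathfrak{gl}(2,\mathbb{R})$, no additional reduction of structure group on the associated graded is required.

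First I would verify that the pair $(\g_2,P)$ satisfies the hypotheses of the general equivalence theorem between regular normal parabolic geometries and underlying structures (\v{C}ap--Slov\'ak, building on Tanaka and Morimoto). Two cohomological conditions must be checked: that $H^1(\mathfrak{m},\g)$ is concentrated in non-positive homogeneities so that a regular normal Cartan connection exists and is unique up to isomorphism, and that the first prolongation of $(\mathfrak{m},\g_0)$ in the sense of Tanaka is exactly $\g_{\geq 0}=\mathfrak{p}$, i.e. the grading is the full prolongation. Both facts are standard for $\g_2$ with this parabolic; they can be read off from Kostant's theorem, or cited from the parabolic geometry literature. Granting these, the machinery produces a functor from $(2,3,5)$-distributions to regular normal parabolic geometries of type $(\mathrm{G}_2,P)$.

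Next I would construct the functor in the other direction. Given a regular normal parabolic geometry $(\mathcal{G}\to M,\omega)$ of type $(\mathrm{G}_2,P)$, one forms the associated bundle $TM\cong\mathcal{G}\times_P(\g/\mathfrak{p})$; the $P$-invariant filtration of $\g/\mathfrak{p}$ by the $\g^i/\mathfrak{p}$ induces a filtration of $TM$, and regularity of $\omega$ guarantees that this filtration is compatible with the Lie bracket of vector fields in the way encoded by the bracket on $\mathfrak{m}$. In particular the piece corresponding to $\g^{-1}/\mathfrak{p}$ is a rank $2$ subbundle $\mathcal{D}\subset TM$ whose weak derived flag reproduces $2,3,5$ and whose bracket-generating step length is the minimal one; hence $\mathcal{D}$ is a $(2,3,5)$-distribution. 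One then checks that these two constructions are mutually inverse up to natural isomorphism and that they are compatible with morphisms, i.e. a diffeomorphism carrying one distribution to another lifts to an isomorphism of the associated Cartan geometries and conversely. This last bijectivity-on-morphisms statement follows from the uniqueness part of the prolongation theorem together with functoriality of the construction of the underlying structure.

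The main obstacle is genuinely the cohomological input: one must be certain that the harmonic curvature obstructing flatness lives only in degrees where the prolongation argument has already been completed, equivalently that $H^1(\g_2/\mathfrak{p},\g_2)$ sits in homogeneity $\geq 1$ only in the component $S^4\g_{-1}^*\otimes\g_{-1}$ (the Cartan quartic) and has nothing in homogeneity $\leq 0$; otherwise the normalization would not pin the Cartan connection down uniquely. This is exactly the computation underlying Cartan's quartic, and the cleanest route is to invoke Kostant's version of the Bott--Borel--Weil theorem to identify $H^1$ as the irreducible $\g_0$-module of highest weight corresponding to the simple root not in $\mathfrak{p}$, acting in the fourth symmetric power, and to observe its homogeneity. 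With that in hand the equivalence of categories follows formally, and the identification of the harmonic curvature with the Cartan quartic $\mathcal{C}\in\Gamma(S^4\mathcal{D}^*)$ mentioned in the introduction is a bonus consequence.
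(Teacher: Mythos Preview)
The paper does not actually prove this theorem; it states it as a known result, with the parenthetical attribution ``(by Tanaka theory, or Cartan's equivalence method)'' and a reference to the general machinery in \cite{cap-slovak-book}, \cite{tanaka}, \cite{yama-simple}. Your outline via Tanaka prolongation and the \v{C}ap--Slov\'ak equivalence theorem is precisely the route the paper has in mind, so in that sense your approach matches.

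That said, your final paragraph contains a genuine confusion that you should straighten out. The cohomological condition needed for the equivalence theorem is that $H^1(\mathfrak{m},\g)$ is concentrated in \emph{non-positive} homogeneity; this is what guarantees existence and uniqueness of the regular normal Cartan connection over a given regular infinitesimal flag structure. You state this correctly at first, but then in the last paragraph you contradict yourself (``has nothing in homogeneity $\leq 0$'') and, more seriously, you identify the Cartan quartic as a component of $H^1$. It is not: the harmonic curvature lives in $H^2(\mathfrak{m},\g)$, and for this parabolic the single irreducible $\g_0$-component of $H^2$ in positive homogeneity is the module isomorphic to $S^4\g_{-1}^*$ (not $S^4\g_{-1}^*\otimes\g_{-1}$), realized inside $\g_1\wedge\g_3\otimes\g_0$ as the paper notes in the proof of Lemma~\ref{normal}. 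The $H^1$ computation and the $H^2$ computation are separate applications of Kostant's theorem serving different purposes: the former yields the equivalence of categories, the latter identifies the fundamental invariant. Once you disentangle these two, your sketch is sound.
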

Based on the Cartan geometric interpretation of $(2,3,5)$-distributions, a relation  to conformal geometry was observed in \cite{nurowski-metric}:
\begin{theorem}
Every $(2,3,5)$-distribution $\mathcal{D}\subset T M$ determines a conformal class $[g]_{\mathcal{D}}$ of metrics of signature $(2,3)$ on $M$. The distribution $\mathcal{D}$ is totally null with respect to any metric from the conformal class $[g]_{\mathcal{D}}$.
\end{theorem}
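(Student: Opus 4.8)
The plan is to realise $[g]_{\mathcal{D}}$ as the conformal structure underlying the parabolic geometry obtained from the canonical Cartan geometry of $\mathcal{D}$ by extending its structure group along $\g_2\hookrightarrow\mathfrak{so}(3,4)$. First I would fix the algebraic picture. Let $\widehat{P}\subset\mathrm{SO}(3,4)$ be the (conformal) parabolic stabilising the null line $\ell=\bbR e_1\subset\mathbb{V}$ in the standard representation; it induces a $1$-grading $\mathfrak{so}(3,4)=\widehat{\g}_{-1}\oplus\widehat{\g}_0\oplus\widehat{\g}_1$ with $\widehat{\g}_{-1}\cong\bbR^{2,3}$ and $\widehat{\g}_0\cong\mathfrak{co}(2,3)$. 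The $(2,3,5)$-parabolic $P\subset\mathrm{G}_2$ is the stabiliser of the \emph{same} line $\ell$, so $P=\mathrm{G}_2\cap\widehat{P}$. Since $\mathrm{G}_2$ acts transitively on null lines in $\mathbb{V}$, the orbit map identifies the homogeneous model $\mathrm{G}_2/P$ with (the connected component through $[\ell]$ of) $\mathrm{SO}(3,4)/\widehat{P}$; in particular the inclusion induces a linear isomorphism $\g_2/\p\cong\mathfrak{so}(3,4)/\widehat{\p}=\widehat{\g}_{-1}$, so the two models share the same underlying $5$-manifold.

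\textbf{The conformal structure.} By the equivalence of categories recalled above, $\mathcal{D}$ carries a canonical regular normal parabolic geometry $(\mathcal{G}\to M,\omega)$ of type $(\mathrm{G}_2,P)$. I would form the extended bundle $\widehat{\mathcal{G}}:=\mathcal{G}\times_{P}\widehat{P}\to M$ and the unique extended Cartan connection $\widehat{\omega}\in\Omega^{1}(\widehat{\mathcal{G}},\mathfrak{so}(3,4))$ that restricts, along $\mathcal{G}\hookrightarrow\widehat{\mathcal{G}}$, to the composition of $\omega$ with $\g_2\hookrightarrow\mathfrak{so}(3,4)$. One then checks that $(\widehat{\mathcal{G}},\widehat{\omega})$ is regular and normal: regularity is automatic in the $1$-graded case, and the underlying structure carried on $M$ by a regular normal parabolic geometry of type $(\mathrm{SO}(3,4),\widehat{P})$ is exactly a conformal structure of signature $(2,3)$, the signature being read off from $\widehat{\g}_{-1}\cong\bbR^{2,3}$. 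This yields the conformal class $[g]_{\mathcal{D}}$, and functoriality of the extension makes it natural.

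\textbf{Total nullity, and the main obstacle.} Under $TM\cong\mathcal{G}\times_{P}(\g_2/\p)$ the distribution $\mathcal{D}$ corresponds to the $P$-invariant subspace $\g_2^{-1}/\p$, so it is enough to check, on the model at the base point $o$, that the corresponding $2$-plane in $\widehat{\g}_{-1}\cong\bbR^{2,3}$ is totally isotropic for the conformal quadratic form. Identifying $\widehat{\g}_{-1}$ with $e_1^{\perp}/\bbR e_1$, the tangent space at $[\ell]$ to the projectivised null cone of $H$ with the form induced by $H$, one computes that $\g_2^{-1}/\p$ maps onto $\mathbb{V}^1/\bbR e_1$, where $\mathbb{V}^1=\Span(e_1,e_2,e_3)=\{Y\in\mathbb{V}:Y\hook X\hook\Phi=0\ \forall\,X\in\ell\}$; from \eqref{bilinearform} the form $H$ vanishes identically on $\Span(e_1,e_2,e_3)$ — it pairs $e_1$ with $e_7$, $e_2$ with $e_6$, $e_3$ with $e_5$ and $e_4$ with itself — so this $2$-plane is totally null (and, by the same computation, $[\mathcal{D},\mathcal{D}]$ corresponds to $\Span(e_1,e_2,e_3,e_4)/\bbR e_1=\mathcal{D}^{\perp}$). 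Since this is a $P$-invariant statement about fixed algebraic data on $\g_2/\p$, it transfers to $\mathcal{D}\subset TM$ over every point of any $(2,3,5)$-manifold. The main obstacle is the normality verification: the extension functor transports the underlying structure and regularity essentially formally, but $\widehat{\partial}^{*}\circ\widehat{K}=0$ does not follow automatically from $\partial^{*}\circ K=0$; it must be obtained by comparing the $\g_2$- and $\mathfrak{so}(3,4)$-Kostant codifferentials — equivalently, by checking that the image in $\Lambda^{2}\widehat{\g}_{-1}^{*}\otimes\mathfrak{so}(3,4)$ of the harmonic $\g_2$-curvature, i.e.\ of the Cartan quartic $\mathcal{C}\in\Gamma(S^{4}\mathcal{D}^{*})$, lies in $\ker\widehat{\partial}^{*}$, or else by correcting $\widehat{\omega}$ by a $\widehat{P}$-equivariant $\widehat{\g}_1$-valued $1$-form so as to restore normality without disturbing its compatibility with $\omega$. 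Everything else is bookkeeping or the short matrix computation indicated above.
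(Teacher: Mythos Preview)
The paper does not supply its own proof of this theorem; it is quoted as a known result from \cite{nurowski-metric}, with the parabolic-geometric reformulation attributed to \cite{mrh-sag-rank2}. Your approach is precisely the latter one, and it is correct in outline.

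One clarification is worth making, however: you flag normality of the extended connection $\widehat{\omega}$ as ``the main obstacle,'' but for the statement as written normality is irrelevant. In the $|1|$-graded conformal case every Cartan geometry of type $(\mathrm{SO}(3,4),\widehat{P})$ --- automatically regular --- already induces a first-order $\widehat{G}_0\cong\mathrm{CO}(2,3)$-structure on $M$, i.e.\ a conformal class of signature $(2,3)$; one does not need $\widehat{\omega}$ to be the canonical normal connection for that class in order to read off the underlying structure. Normality only matters if one further wants to identify $\widehat{\omega}$ with the normal conformal Cartan connection and thereby compare curvature invariants or holonomies, which is beyond what the theorem asserts. (Normality does in fact hold --- this is the content of \cite{mrh-sag-rank2}, and the analogous verification for the Lie contact extension is Lemma~\ref{normal} here --- but you do not need it.) Your nullity argument is fine and is exactly the right kind of check: a $P$-invariant pointwise statement on the model, with $\g_2^{-1}/\p$ landing in $\mathbb{V}^1/\ell\subset\ell^{\perp}/\ell$, which is visibly isotropic for $H$ from \eqref{bilinearform}.

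For comparison, the paper does contain an alternative, more hands-on route in Section~\ref{edsd}: one writes the bilinear form $g_{\mathcal D}=\tfrac{4}{3}(\theta^0)^2+2\theta^1\theta^4-2\theta^2\theta^3$ on the Cartan bundle $\mathcal{G}$ and checks directly from the EDS of Theorem~\ref{g2_gs} that its Lie derivatives along the vertical directions are multiples of $g_{\mathcal D}$, so a conformal class descends to $M$. Total nullity of $\mathcal{D}$ is then immediate from the formula, since $\mathcal{D}$ is annihilated by $\theta^0,\theta^1,\theta^2$. This EDS argument is closer in spirit to the original construction in \cite{nurowski-metric} and bypasses the extension-of-structure-group functor altogether; your approach trades that explicitness for conceptual clarity about why $\mathrm{G}_2\hookrightarrow\mathrm{SO}(3,4)$ is responsible for the conformal structure.
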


\subsection{Lie contact structures}\label{liecontdef}
A contact distribution $\mathcal{H}\subset T M $ on a manifold of dimension $2n+1$ is a co-rank $1$ subbundle such that the  Levi-bracket  
\begin{align*}
\mathcal{L}:\Lambda^2\mathcal{H}\to  T M/\mathcal{H}, \ \mathcal{L}(\xi_x\wedge\eta_x)=[\xi,\eta]_x+\mathcal{H}_x,
\end{align*}
is non degenerate at each point $x\in M$. In other words,  locally, $\mathcal{H}$ is  the kernel of a contact form $\theta$.
%, that is a $1$-form $\theta\in\Omega^1(M)$ such that $\theta\wedge (d\theta)^n\neq 0$. 
Contact distributions do not have local invariants; locally one may always find coordinates $(t,q_i,p_j)$ such that $\theta= \der t-\sum_{i}p_i \der q_i$.

Lie contact structures have been introduced and studied  by Sato and Yamaguchi \cite{Sato-yama1, Sato-yama2}, and  Miyaoka \cite{miyaoka1, miyaoka2}. To state a definition, note that the symbol algebra of a contact distribution $\mathcal{H}$ is at each point isomorphic to the negative part $\tilde{\g}_{-}$ of grading of $\tilde{\g}=\mathfrak{so}(3,4)$ from section \ref{parabolics}. Hence the natural frame bundle for the contact distribution has structure group the grading preserving Lie algebra automorphisms $\mathrm{Aut}_{gr}(\tilde{\g}_{-})$  (which is isomorphic to the conformal symplectic group $\mathrm{CSp}(2n)$). Let $\widetilde{G}_0\cong \mathrm{GL}(2)\times\mathrm{O}(p,q)$ be the subgroup of $\widetilde{P}$ preserving the grading.  A Lie contact structure can be defined as a contact distribution equipped with a reduction of   structure group of the natural frame bundle  with respect to the obvious map   $\widetilde{G}_0\to \mathrm{Aut}_{gr}(\tilde{\g}_{-})$. 

Equivalently, see \cite{cap-slovak-book} and  \cite{vojtech}: 
A Lie contact structure of signature $(p,q)$ on a  manifold $M$ of dimension $2(p+q)+1$ is given by 
\begin{itemize}
\item a contact distribution $\mathcal{H}\subset T M$, 
\item two auxiliary vector bundles, $E\to M$ of rank $2$ and $F\to M$ of rank $p+q$,
and  a bundle metric $b$ of signature $(p,q)$ on $F$, 
\item  an isomorphism $\mathcal{H}\cong E^*\otimes F$ such that   the Levi bracket $\mathcal{L}$  is invariant under the induced action of the orthogonal group $O(b)$ on $\mathcal{H}$. 
\end{itemize}

\begin{theorem} \label{paraLie}There is an equivalence of categories between Lie contact structures of signature $(p,q)$ and regular, normal parabolic geometries of type $(\mathrm{O}(p+2,q+2),\widetilde{P})$, where $\widetilde{P}\subset \mathrm{O}(p+2,q+2)$ is the stabilizer of a totally null $2$-plane.
\end{theorem}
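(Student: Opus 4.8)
This is the standard equivalence for parabolic contact geometries; the plan is to prove it via the general prolongation machinery. I would show that a Lie contact structure of signature $(p,q)$ is exactly the \emph{underlying regular infinitesimal flag structure} of a regular normal parabolic geometry of type $(\mathrm{O}(p+2,q+2),\widetilde{P})$, and then quote the fundamental theorem of parabolic geometry which, away from the projective and contact-projective cases, realizes such underlying structures functorially by regular normal Cartan geometries, uniquely normalized via the Kostant codifferential $\partial^*$ of \eqref{KostantCo}. Throughout write $\tilde{\g}=\mathfrak{so}(p+2,q+2)$ with the contact $|2|$-grading $\tilde{\g}=\tilde{\g}_{-2}\oplus\tilde{\g}_{-1}\oplus\tilde{\g}_0\oplus\tilde{\g}_1\oplus\tilde{\g}_2$ determined by the parabolic $\tilde{\p}\subset\tilde{\g}$ stabilizing a totally null $2$-plane (the Lie contact parabolic), and let $\widetilde{G}_0\cong\mathrm{GL}(2,\mathbb{R})\times\mathrm{O}(p,q)$ be the grading-preserving subgroup of $\widetilde{P}$ as in Section~\ref{liecontdef}.

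First I would unwind the symbol from the splitting \eqref{sograd1} of the standard representation: with $E_0=\mathbb{R}^2$ the standard $\mathrm{GL}(2,\mathbb{R})$-module and $F_0$ the standard $\mathrm{O}(p,q)$-module carrying its invariant form $b$, one reads off $\tilde{\g}_{-1}\cong E_0^*\otimes F_0$ and $\tilde{\g}_{-2}\cong\Lambda^2E_0^*$ (one-dimensional), while the only nonzero bracket $\Lambda^2\tilde{\g}_{-1}\to\tilde{\g}_{-2}$ is forced by $\widetilde{G}_0$-equivariance and skew-symmetry to be $(\phi\otimes v)\wedge(\psi\otimes w)\mapsto b(v,w)\,\phi\wedge\psi$ up to scale, which is non-degenerate precisely because $b$ is. Consequently a regular parabolic geometry of type $(\mathrm{O}(p+2,q+2),\widetilde{P})$ induces on its base $M$ a corank-one subbundle $\mathcal{H}=T^{-1}M\subset TM$ whose Levi bracket is non-degenerate — a contact distribution — together with a reduction of the frame bundle of $\mathrm{gr}(TM)$ to $\widetilde{G}_0$; and by the module identification above such a reduction amounts exactly to a rank-$2$ bundle $E$, a rank-$(p+q)$ bundle $F$ with a signature-$(p,q)$ metric $b$, and an isomorphism $\mathcal{H}\cong E^*\otimes F$ identifying the Levi bracket with the map induced by $b$, i.e.\ exactly a Lie contact structure in the sense of Section~\ref{liecontdef}. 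Regularity is automatic, being built into non-degeneracy of the Levi bracket, and morphisms correspond in both directions, so this step gives an equivalence between Lie contact structures of signature $(p,q)$ and regular infinitesimal flag structures of type $(\mathrm{O}(p+2,q+2),\widetilde{P})$.

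The second and decisive step is the passage from the infinitesimal flag structure to the Cartan geometry. Here I would invoke the general equivalence of Tanaka, in the parabolic formulation of \v{C}ap and Slov\'ak (see \cite{tanaka, yama-simple, cap-slovak-book}): for a $|k|$-graded semisimple Lie algebra that is of neither projective nor contact-projective type, passage to the underlying structure is an equivalence from the category of regular normal parabolic geometries of type $(G,P)$ onto the category of regular infinitesimal flag structures of that type, the inverse functor building the Cartan bundle by iterated prolongation of the $\widetilde{G}_0$-frame bundle and then selecting the unique Cartan connection with $\partial^*\circ K=0$. The Lie contact grading of $\mathfrak{so}(p+2,q+2)$ is not among the two exceptional cases — the reduction $\widetilde{G}_0\hookrightarrow\mathrm{Aut}_{gr}(\tilde{\g}_-)\cong\mathrm{CSp}(2(p+q),\mathbb{R})$ found above is proper, so the infinitesimal flag structure carries genuine geometric information and nothing beyond the $\widetilde{G}_0$-reduction is needed to normalize $\omega$ — so the theorem applies, and composing with the previous step yields the asserted equivalence. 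I expect the only real content, beyond the routine module bookkeeping and functoriality checks, to be exactly this verification: that regularity together with the normalization $\partial^*\circ K=0$ pins down $\omega$ uniquely from the underlying data, which ultimately rests on Kostant's computation of the relevant Lie algebra cohomology $H^1(\tilde{\g}_-,\tilde{\g})$ and on placing the Lie contact grading outside the exceptional list.
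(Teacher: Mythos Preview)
Your proposal is correct and is precisely the standard route via the general prolongation theorem for parabolic geometries; it is the argument implicit in the references the paper cites (\cite{tanaka,Sato-yama1,Sato-yama2,cap-slovak-book}). The paper itself does not supply a proof of this theorem at all---it is stated as a background result from the literature---so there is no discrepancy to discuss, and your sketch faithfully unpacks what those citations contain.
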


Given a parabolic geometry $(\widetilde{\mathcal{G}}\to\widetilde{M},\widetilde{\omega})$ of type $(\mathrm{O}(p+2,q+2),\widetilde{P})$, vector bundles $E\to M$ and $F\to M$ as in the above description of Lie contact structures are obtained as associated bundles $E=\widetilde{\mathcal{G}}\times_{\widetilde{P}}\mathbb{E}$ and $F=\widetilde{\mathcal{G}}\times_{\widetilde{P}}(\mathbb{E}^{\perp}/\mathbb{E})$, where $\mathbb{E}\subset\mathbb{R}^{p+2,q+2}$ is the totally null $2$-plane stabilized by the parabolic subgroup $\widetilde{P}$.
%$\widetilde{\mathcal{G}}_0=\widetilde{\mathcal{G}}/\widetilde{P}_+$ and $\widetilde{\mathbb{V}}_1$ and $\widetilde{\mathbb{V}}_0$ are the defining representations for the $\mathrm{GL}(2,\mathbb{R})$ factor and the $\mathrm{SO}(1,2)$ factor of $\widetilde{G}_0$, respectively,  see \eqref{sograd1}.

\subsection{$\mathrm{G}_2$-contact structures}\label{G_2_contact}

A $\mathrm{G}_2$-contact structure is defined similarly as a Lie contact structure.
%A $\mathrm{G}_2$-contact structure 
It is given by a contact distribution $\mathcal{H}\subset M$ on a $5$-manifold $M$ together with a reduction of structure group of the natural frame bundle of $\mathcal{H}$ to $\bar{G}_0\cong\mathrm{GL}(2,\mathbb{R})\subset \mathrm{Aut}_{gr}(\bar{\g}_{-})$, where $\bar{\g}_{-}$ denotes the negative part in the grading associated to \eqref{g2bargrad}. Equivalently,  it is a contact distribution $\mathcal{H}$ together with an identification $\mathcal{H}\cong S^3 E$, for some rank $2$ bundle $E\to M$, such that the Levi bracket $\mathcal{L}$ is  invariant under the induced action of $\mathfrak{gl}(E)$, see  \cite{cap-slovak-book}.
Again, by the general theory:
\begin{theorem}
There is an equivalence of categories between $\mathrm{G}_2$-contact structures and parabolic geometries of type $(\mathrm{G}_2,\bar{P})$, where $\bar{P}\subset \mathrm{G}_2$ is the parabolic subgroup defined as the stabilizer of a totally  null $2$-plane that inserts trivially into the defining $3$-form for $\mathrm{G}_2$.
\end{theorem}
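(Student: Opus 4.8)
The statement is an instance of the general equivalence of categories between regular, normal parabolic geometries and their underlying infinitesimal flag structures, specialised to the contact parabolic $\bar{P}\subset\mathrm{G}_2$; the plan is to make the algebraic inputs of that machinery explicit. First I would describe the grading $\bar{\g}=\bar{\g}_{-2}\oplus\bar{\g}_{-1}\oplus\bar{\g}_{0}\oplus\bar{\g}_{1}\oplus\bar{\g}_{2}$ of $\g_2$ determined by $\bar{\p}$, see \eqref{g2bargrad}. Reading off the matrix form \eqref{g2matrices} one finds $\dim\bar{\g}_{-2}=1$ and $\dim\bar{\g}_{-1}=4$, and that the Levi bracket $\Lambda^2\bar{\g}_{-1}\to\bar{\g}_{-2}$ is a nondegenerate, hence (up to scale) symplectic, $\bar{\g}_0$-equivariant map; thus $\bar{\g}_-$ is the five-dimensional Heisenberg algebra and $\bar{\p}$ is a contact grading. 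A reductive Levi factor is $\bar{\g}_0\cong\mathfrak{gl}(2,\mathbb{R})$, cf.\ Section \ref{G_2_contact}; under it $\bar{\g}_{-1}$ is irreducible and isomorphic to $S^3E$, where $E$ is the standard $2$-dimensional representation of the semisimple part of $\bar{\g}_0$, and the Levi bracket is the canonical $\mathfrak{sl}(2,\mathbb{R})$-invariant symplectic form on $S^3E$.

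Next I would pass to the group level. The subgroup $\bar{G}_0\subset\bar{P}$ preserving the grading is isomorphic to $\mathrm{GL}(2,\mathbb{R})$, and the natural homomorphism $\bar{G}_0\to\mathrm{Aut}_{gr}(\bar{\g}_-)$ into the grading-preserving automorphisms of the Heisenberg algebra (the conformal symplectic group $\mathrm{CSp}(4,\mathbb{R})$) is injective, with image precisely the stabiliser of the decomposition $\bar{\g}_{-1}\cong S^3E$ together with the induced conformal symplectic form. Hence a reduction of structure group of the graded frame bundle of a contact distribution $\mathcal{H}$ to $\bar{G}_0$ along this homomorphism is exactly the datum of a rank-$2$ bundle $E$ with an isomorphism $\mathcal{H}\cong S^3E$ for which the Levi bracket is $\mathfrak{gl}(E)$-invariant; that is, a $\mathrm{G}_2$-contact structure in the sense of Section \ref{G_2_contact} is the same thing as a regular infinitesimal flag structure of type $(\mathrm{G}_2,\bar{P})$. (Regularity here is automatic, since the symbol algebra of any contact distribution is the Heisenberg algebra $\bar{\g}_-$.)

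It then remains to invoke the general prolongation result for parabolic geometries (see \cite{cap-slovak-book}, building on \cite{tanaka}): provided the Lie algebra cohomology $H^1(\bar{\g}_-,\g_2)$ is concentrated in non-positive homogeneous degrees, the functor assigning to a regular, normal parabolic geometry of type $(\mathrm{G}_2,\bar{P})$ its underlying regular infinitesimal flag structure is an equivalence of categories, the inverse functor being the passage to the canonical normal Cartan connection; composing with the identification of the previous paragraph yields the theorem. The only point that is not immediate is this cohomological vanishing, which I expect to be the main -- though still routine -- step. I would verify it by Kostant's theorem: the irreducible components of $H^1(\bar{\g}_-,\g_2)$ are indexed by the length-one elements of the Hasse diagram of $\bar{\p}$ in the Weyl group of $\mathrm{G}_2$, and computing their lowest weights shows that each of them sits in homogeneity $\le 0$; the same computation identifies the positive-homogeneity part of $H^2(\bar{\g}_-,\g_2)$, which is the harmonic curvature carrying the local invariants of a $\mathrm{G}_2$-contact structure. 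The main thing to be careful with is the bookkeeping of homogeneities for this \emph{second} maximal parabolic of $\g_2$, which are normalised differently from those of the $(2,3,5)$-parabolic $P$ used elsewhere in the paper; apart from that, the argument is an explicit finite computation entirely parallel to the established cases of Theorem \ref{paraLie} and of $(2,3,5)$-distributions.
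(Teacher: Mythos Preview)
Your proposal is correct and is essentially what the paper has in mind: the paper does not give an independent proof of this theorem but simply states it as a consequence of the general Tanaka/parabolic-geometry machinery (``Again, by the general theory''), referring to \cite{cap-slovak-book} and \cite{tanaka}. Your sketch makes explicit precisely the algebraic verifications (the contact grading, the identification $\bar{\g}_0\cong\mathfrak{gl}(2,\mathbb{R})$ with $\bar{\g}_{-1}\cong S^3E$, and the Kostant-type cohomology check) that the cited general theorem requires as input, so there is no divergence in approach.
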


\begin{remark}
Yet another description of Lie contact structures and $G_2$-contact structures in terms of a conformal symmetric rank $4$ tensor on the contact distribution  will be provided in sections \ref{underlying_Liecont} and \ref{associated_G_2cont}, respectively.
\end{remark}

\subsection{Relating the models}
\label{models}
  The model for $(2,3,5)$-distributions is the homogeneous space $\mathrm{G}_2/P$ together with its canonical $\mathrm{G}_2$-invariant distribution $\mathcal{D}$. Since $\mathrm{G}_2$ acts transitively on the projective quadric $\mathbb{P}(\mathcal{C})$  of all null-lines with respect to the invariant bilinear form $H$, and $P$ is the stabilizer of such a null-line $\ell$,  we  get an identification
\begin{align*}
\mathrm{G}_2/P\cong \mathbb{P}(\mathcal{C}).
\end{align*}
 The model for the Lie contact structures we are interested in is the homogeneous space
$O(3,4)/\widetilde{P}$ with its canonical left invariant Lie contact structure. Since  $\widetilde{P}$ is the stabilizer of a totally null $2$-plane and  $\mathrm{O}(3,4)$ acts transitively on such $2$-planes, this homogeneous space  can be identified with the $7$-dimensional orthogonal Grassmannian of totally null $2$-planes,
\begin{align*}
\mathrm{O}(3,4)/\widetilde{P}\cong \mathrm{Gr}(2,\mathbb{R}^{3,4}).
\end{align*}
Finally, the homogeneous model for $\mathrm{G}_2$-contact structures is $\mathrm{G}_2/\bar{P}$ with its canonical left-invariant $\mathrm{G}_2$-contact structure.
 
In order to relate the models, recall the $\mathrm{G}_2$-orbit decomposition of the orthogonal Grassmannian $\mathrm{Gr}(2,\mathbb{R}^{3,4})$:
 \begin{proposition}\label{orbits}
 Let $\mathbb{V}$ be a $7$-dimensional vector space with a bilinear form $H$ of signature $(3,4)$, and consider the Grassmannian $\mathrm{Gr}(2,\mathbb{R}^{3,4})$ of totally null $2$-planes in $\mathbb{V}$. Let $\Phi$ be a defining $3$-form for $\mathrm{G}_2\subset \mathrm{O}(3,4)$. 
 Then $\mathrm{Gr}(2,\mathbb{R}^{3,4})$ decomposes into two  $\mathrm{G}_2$-orbits:
 \begin{itemize}
 \item a closed, $5$-dimensional orbit of special $2$-planes $\mathbb{E}'=\mathrm{span}(V,W)$ for which 
\[{\mathbb{E}'}\hook\Phi=V\hook W\hook\Phi= 0,\]
  \item and an open orbit of generic $2$-planes $\mathbb{E}=\mathrm{span}(V,W)$ for which
\[{\mathbb{E}}\hook\Phi=V\hook W\hook\Phi\neq 0.\] 
\end{itemize}
 In the latter  case,  inserting the $2$-plane $\mathbb{E}$ into the $3$-form $\Phi$ defines a line $\ell\subset\mathbb{V}$, which is  null.
The stabilizer  in $\mathrm{G}_2$ of a generic $2$-plane is a $7$-dimensional subgroup $Q=G_0\ltimes \mathrm{exp}(\mathfrak{g}^2)$ isomorphic to $\mathrm{GL}(2,\mathbb{R})\ltimes \mathbb{R}^3$ contained in the parabolic subgroup $P$ that stabilizes the null-line $\ell$.
 \end{proposition}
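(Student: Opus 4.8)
The plan is to reduce the proposition to two transitivity assertions for the $\mathrm{G}_2$-action on $\mathrm{Gr}(2,\mathbb{R}^{3,4})$, and to settle these by combining the octonionic identities $\Phi(X,Y,Z)=H(X\times Y,Z)$ (the definition of $\Phi$) and the standard double-product relation $X\times(X\times Y)=H(X,Y)X-N(X)Y$ with the explicit matrix model of Sections~\ref{matrices}--\ref{parabolics}. The starting point: for $\mathbb{E}=\mathrm{span}(V,W)$ the covector $V\hook W\hook\Phi\in\mathbb{V}^*$ is well defined up to a nonzero scalar and vanishes exactly when $V\times W=0$ (as $H$ is nondegenerate); since $\mathrm{G}_2$ fixes $\Phi$, the locus $\mathcal{S}$ of special $2$-planes is $\mathrm{G}_2$-invariant and closed (being cut out by the algebraic equation $V\hook W\hook\Phi=0$), and its complement $\mathcal{O}$ is $\mathrm{G}_2$-invariant and open. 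On $\mathcal{O}$ the line $\ell_{\mathbb{E}}:=\mathbb{R}(V\times W)$ is well defined, depends $\mathrm{G}_2$-equivariantly on $\mathbb{E}$, and satisfies $\ell_{\mathbb{E}}\perp\mathbb{E}$ because $\Phi(V,W,V)=\Phi(V,W,W)=0$.

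To show $\mathcal{O}$ is a single orbit and that $\ell_{\mathbb{E}}$ is null, I would first use the Lagrange-type identity $N(X\times Y)=N(X)N(Y)-H(X,Y)^2$: total nullity of $\mathbb{E}$ forces $N(V\times W)=0$, so $\ell_{\mathbb{E}}$ is a null line, and $\mathbb{E}\mapsto\ell_{\mathbb{E}}$ is a $\mathrm{G}_2$-equivariant map from $\mathcal{O}$ to the space of null lines in $\mathbb{V}$, i.e.\ to $\mathrm{G}_2/P$. As $\mathrm{G}_2$ acts transitively on null lines \cite{bryant-exceptional}, it remains to check that $P=\mathrm{Stab}_{\mathrm{G}_2}(\ell_0)$, with $\ell_0=\mathbb{R}e_1$, is transitive on the fibre over $\ell_0$. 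For generic $\mathbb{E}$ with $\ell_{\mathbb{E}}=\ell_0$, applying the double-product identity to the totally null pair $V,W$ gives $V\times(V\times W)=0=W\times(W\times V)$, hence (since $V\times W\in\ell_0\setminus\{0\}$) $e_1\times V=e_1\times W=0$, i.e.\ $V,W\in\mathbb{V}^1=\mathrm{span}(e_1,e_2,e_3)$, the maximal totally null $3$-plane of \eqref{filtration}---which is precisely $\{v:e_1\times v=0\}$. Conversely any $2$-plane in $\mathbb{V}^1$ through $e_1$ is special, so the fibre of $\mathcal{O}$ over $\ell_0$ is exactly the $2$-dimensional affine space of complements to $\ell_0$ in $\mathbb{V}^1$. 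Finally $P$ is transitive on these complements: for $X\in\g_1$ (grading as in \eqref{g2grad}) the element $\exp(X)\in P$ acts on $\mathbb{V}^1$ by $v\mapsto v+X(v)$ with $X(v)\in\ell_0$, hence sends $\mathbb{V}_1=\mathrm{span}(e_2,e_3)$ to the graph of $X|_{\mathbb{V}_1}\in\mathrm{Hom}(\mathbb{V}_1,\ell_0)$, and $X\mapsto X|_{\mathbb{V}_1}$ maps $\g_1$ onto $\mathrm{Hom}(\mathbb{V}_1,\ell_0)$ (visible from the entries in positions $(1,2),(1,3)$ of \eqref{g2matrices}). Thus $\mathcal{O}$ is the orbit of $\mathbb{E}_0:=\mathrm{span}(e_2,e_3)$, for which $\mathbb{E}_0\hook\Phi=8e^7$, i.e.\ $\ell_{\mathbb{E}_0}=\mathbb{R}e_1$, by the computation in Section~\ref{parabolics}.

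The closed orbit is handled by the same scheme, only more easily: after moving a null line of a special plane to $\ell_0$, the plane becomes a $2$-plane through $e_1$ inside $\mathbb{V}^1$, and the Levi factor $G_0\cong\mathrm{GL}(2,\mathbb{R})$---acting as the full general linear group on $\mathbb{V}^1/\ell_0\cong\mathbb{R}^2$---is transitive on those; hence $\mathcal{S}=\mathrm{G}_2\cdot\mathrm{span}(e_1,e_2)\cong\mathrm{G}_2/\bar P$. Since $\bar\g_-$ is the symbol of a contact structure on a $5$-manifold (Section~\ref{G_2_contact}), $\dim\bar\p=9$, so $\dim\mathcal{S}=5$; as $\mathcal{S}$ is closed and $\dim\mathrm{Gr}(2,\mathbb{R}^{3,4})=7$, the orbit $\mathcal{O}$ is open and $\dim Q=7$, where $Q:=\mathrm{Stab}_{\mathrm{G}_2}(\mathbb{E}_0)$. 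To identify $Q$ I would read off \eqref{g2matrices}: demanding that columns two and three have vanishing entries in rows $1,4,5,6,7$ forces $b^0=\cdots=b^6=0$ and leaves exactly $\q=\g_0\oplus\g_2\oplus\g_3=\g_0\oplus\g^2$ (spanned by $q^1,\dots,q^7$); since $[\g^2,\g^2]\subset\g_{\geq 4}=0$ and $[\g_0,\g^2]\subset\g^2$, the abelian subalgebra $\g^2\cong\mathbb{R}^3$ is an ideal normalised by $\g_0\cong\mathfrak{gl}(2,\mathbb{R})$, whence $Q=G_0\ltimes\exp(\g^2)\cong\mathrm{GL}(2,\mathbb{R})\ltimes\mathbb{R}^3$, and $Q\subset P$ because $\g^2\subset\p$ (equivalently, because $\mathbb{E}\mapsto\ell_{\mathbb{E}}$ is equivariant).

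The step I expect to be the real work is the fibrewise transitivity in the second paragraph: identifying the fibre of $\mathcal{O}\to\mathrm{G}_2/P$ over $\ell_0$ with the affine space of complements to $\ell_0$ in $\mathbb{V}^1$ and verifying that $P$ acts transitively on it---that is, ruling out that $\mathcal{O}$ splits into several $P$-orbits sitting over one fixed null line. This is the point at which one genuinely uses the octonionic identities and the explicit form of $\Phi$, rather than just the fact that $\Phi$ is a stable $3$-form; everything else is equivariance bookkeeping together with a direct inspection of the matrices in \eqref{somatrices}--\eqref{g2matrices}.
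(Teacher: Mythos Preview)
Your proof is correct and follows essentially the same octonionic/matrix strategy as the paper: characterise the two types of null $2$-planes via the cross product, establish transitivity on each, and compute the stabiliser of a generic plane from the explicit grading. The one genuine difference is in the transitivity step. The paper invokes the fact (from Baez--Huerta) that $\mathrm{G}_2$ acts transitively on split-octonionic null triples, which handles both orbits at once. You instead fibre $\mathcal{O}$ over the null quadric $\mathrm{G}_2/P$ via $\mathbb{E}\mapsto\ell_{\mathbb{E}}$, cite only transitivity on null lines, and then prove by hand---using the double-product identity to pin the fibre down as the affine space of complements to $\ell_0$ in $\mathbb{V}^1$ and the explicit action of $\exp(\g_1)$---that $P$ is transitive on each fibre. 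This is slightly more work but also more self-contained: it uses only the standard cross-product identities and the matrix presentation already set up in Sections~\ref{matrices}--\ref{parabolics}, rather than importing the null-triple result. Your stabiliser computation (reading $\q=\g_0\oplus\g^2$ directly from the columns of~\eqref{g2matrices}) agrees with the paper's argument, which reaches the same conclusion by analysing how $\exp(\g_1)$ and $\exp(\g^2)$ act on $\mathbb{E}\subset\mathbb{V}^1$.
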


The $G_2$-orbit decomposition of the orthogonal Grassmannian is known, see e.g. \cite{LandsMan2, pas}. It can  be proven by means of  split octonionic algebra, and we outline the main arguments for a proof in the remark below.

\begin{remark} 
First, let us observe that for any totally null $2$-plane $\mathbb{W}$, $\mathbb{W}\hook\Phi$ is either zero or defines a null line:  Take $\mathbb{V}=\mathrm{Im}\mathbb{O}'$ and $\Phi(X, Y, Z)= H(X\times Y, Z)$. Consider a totally null $2$-plane  $\mathbb{W}=\mathrm{span}(W_1, W_2)\subset \mathbb{V},$ then 
\begin{align*}
W_1\cdot W_2= W_1\times W_2 - \left\langle W_1, W_2\right\rangle 1= W_1\times W_2,
\end{align*}
since $\left\langle W_1, W_2\right\rangle=0$. Hence $\mathbb{W}$ is special if and only if $W_1\cdot W_2=0$ (i.e., it corresponds to a null subalgebra) and generic if and only if $W_1\cdot W_2\neq 0.$ In the latter case 
\begin{align*}
\ell=\mathrm{span}(W_1\cdot W_2)\subset\mathbb{V}
\end{align*}
is a well-defined line  determined by the plane $\mathbb{W}$, and it is null since the quadratic form is multiplicative and both $W_1$ and $W_2$ are null.

Next one needs to prove transitivity of $\mathrm{G_2}$ on generic and special $2$-planes, respectively. This  follows, for instance,    from the fact  that $\mathrm{G}_2$ acts transitively on split octonionic null triples (see \cite{BaezHuerta}, Theorem 13 and Proposition 15): these are ordered triples $X, Y, Z$ of pairwise orthogonal null imaginary split octonions such that $\Phi(X,Y,Z)=\frac{1}{2}$. 
Having established transitivity, the stabilizers can be computed for  arbitrarily chosen $2$-planes in the respective orbits. 

 Let us discuss the stabilizer  $Q$ of a generic null $2$-plane $\mathbb{E}=\mathrm{span}(W_1, W_2)$ in  more detail. First, it preserves the null-line $\ell=\mathrm{span}(W_1\cdot W_2)$ determined by $\mathbb{E}$. Hence, evidently, $Q$ is contained in the parabolic subgroup $P$ stabilizing $\ell$. Next one can show that $ \mathbb{E}\oplus\ell=\mathrm{span}(W_1,W_2,W_1\cdot W_2)$ coincides with 
 \begin{align*}
\{ Z\in\mathbb{V}\ : \ Z\cdot(W_1\cdot W_2)=0\}=\{Z\ :\,Z\hook X\hook \Phi =0\ \forall X\in\ell \}=\mathbb{V}^1,\end{align*} the latter space being the $3$-dimensional filtrant
 in the filtration \eqref{filtration} preserved by the parabolic $P$.
So now we choose a subgroup $G_0\subset Q$, $G_0\cong P/\mathrm{exp}(\p_+)$. Then $P= G_0\ltimes \mathrm{exp}(\mathfrak{g}_1\oplus\mathfrak{g}_2\oplus\mathfrak{g}_3)$, where $\mathrm{exp}(\mathfrak{g}_1)$ acts by (non-zero) maps from $\mathbb{E}$ to $\ell$,
%$\mathrm{exp}(\mathfrak{g}_1)$ maps $\mathbb{W}$ to $\ell$,
 while $\mathrm{exp}(\mathfrak{g}_2\oplus\mathfrak{g}_3)$ acts trivially on $\mathbb{E}$. Hence the subgroup $Q$, which  preserves $\mathbb{E},$  is isomorphic to $G_0\ltimes \mathrm{exp}(\mathfrak{g}_2\oplus\mathfrak{g}_3)$.

\end{remark}

\begin{remark}
Proposition \ref{orbits} shows that the open $\mathrm{G}_2$-orbit in  $\mathrm{O}(3,4)/\tilde{P}$ fibres over $\mathrm{G}_2/P$,
  \[
\begin{tikzcd}
P/Q \arrow[hook]{r} &\arrow{d} \mathrm{G}_2/Q&\\
   &\mathrm{G}_2/P&
\end{tikzcd}
\]
and its five-dimensional boundary is isomorphic to $\mathrm{G}_2/\bar{P}$, i.e., the homogeneous model space for $\mathrm{G}_2$-contact structures.
\end{remark}

\begin{remark} 
%Root diagramm corresponding to  $Q=G_0\ltimes \mathrm{exp}(\mathfrak{g}_2\oplus\mathfrak{g}_3)\subset P$:
In the following root diagram, all black dots correspond to root spaces contained in the standard parabolic $\p$ and the ones with red circles correspond to root spaces contained in the subalgebra $\q=\g_0\oplus\g_2\oplus\g_3\subset\p$:
\begin{center}
\begin{tikzpicture}[scale=1]

%\filldraw(0,0) circle(0.15);

\draw[thick] (0,-1.732) -- (0,1.732);
 \filldraw(0,1.732)circle(0.08);
  \draw[red](0,1.732)circle(0.13);
 \draw(0,-1.732)circle(0.08);
 \filldraw[white](0,-1.732)circle(0.06);
 
 \draw[thick](-1.5,-0.866)--(1.5,0.866);
\draw(-1.5,-0.866)circle(0.08);
\filldraw[white](-1.5,-0.866)circle(0.06);
 \filldraw(1.5,0.866)circle(0.08);
  \draw[red](1.5,0.866)circle(0.13);
 
 \draw[thick](1.5,-0.866)--(-1.5,0.866);
\filldraw(-1.5,0.866)circle(0.08);
\draw[red](-1.5,0.866)circle(0.13);
 \filldraw(1.5,-0.866)circle(0.08);
\draw[red](1.5,-0.866)circle(0.13);

\draw[thick](-1,0)--(1,0);
\draw(-1,0)circle(0.08);
\filldraw[white](-1,0)circle(0.06);
 \filldraw(1,0)circle(0.08);
% \draw[red](1,0)circle(0.13);
 
 \filldraw[thick](0.5,-0.866)--(-0.5,0.866);
 \draw(0.5,-0.866)circle(0.08);
  \filldraw[white](0.5,-0.866)circle(0.06);
 \filldraw(-0.5,0.866)circle(0.08);
  %\draw[red](-0.5,0.866)circle(0.13);
 
  \draw[thick](-0.5,-0.866)--(0.5,0.866);
 \draw(-0.5,-0.866)circle(0.08);
 \filldraw[white](-0.5,-0.866)circle(0.06);
 \filldraw(0.5,0.866)circle(0.08);
  \draw[red](0.5,0.866)circle(0.13);

 \draw (2.5,1.2) node {\small {$\alpha_2+3\alpha_1$}};
 \draw (-2.5,-1.2) node {\small{$-\alpha_2-3\alpha_1$}};
 \draw (0.9,1.2) node {\small{$\alpha_2+2\alpha_1$}};
  \draw (-0.8,-1.2) node {\small{$-\alpha_2-2\alpha_1$}};
  %\draw[purple,rotate=-30,yshift=28pt] (-1.2,0.15);
  % rectangle (1.2,-0.15); % g_2
 \draw (1.55,0) node {\small{$ \alpha_1$}};   
  \draw (-1.55,0) node {\small{$- \alpha_1$}};   
\draw(2.1,-1.2) node {\small{$-\alpha_2$}} ;
\draw(-2.1,1.2) node {\small{$\alpha_2$}} ;
 \draw(0.8,-1.2) node {\small{$-\alpha_2-\alpha_1 $}};
  \draw(-0.8,1.2) node {\small{$\alpha_2+\alpha_1 $}};
 \draw (0,-2.1) node {\small{$-2\alpha_2-3\alpha_1$}};
 \draw (0,2.1) node {\small{$2\alpha_2+3\alpha_1$}};

\end{tikzpicture}
\end{center}
\end{remark}

\section{From $(2,3,5)$-distributions to Lie contact structures}\label{secContruction}
%\section{A $7$-manifold with interesting geometric structure}
In this section we  present a natural geometric construction of a  $7$-dimensional twistor bundle over a $5$-manifold equipped with a $(2,3,5)$-distribution, and we investigate the induced geometric structure on the twistor bundle. In particular, we will prove Theorem \ref{thm1.1}.

\subsection{The $(2,3,5)$ twistor bundle}\label{twistor-bundle}
Let $\mathcal{D}$ be a $(2,3,5)$ distribution on a $5$-manifold $M$ with derived flag $\mathcal{D}\subset[\mathcal{D},\mathcal{D}]\subset TM$ and conformal class $[g]_{\mathcal{D}}.$ Then we can form the  bundle 
\begin{align*}
\pi:\mathbb{P}([\mathcal{D},\mathcal{D}])=\cup_{x\in M}\{ \ell_x\subset[\mathcal{D},\mathcal{D}]_x\}\to M
\end{align*} 
of all lines contained in the rank $3$-distribution.
% $[\mathcal{D},\mathcal{D}]$, i.e.\[\mathcal{P}([\mathcal{D},\mathcal{D}])=\cup_{x\in M}\{ \ell_x\subset[\mathcal{D},\mathcal{D}]_x\}.\]  
The $7$-dimensional manifold  $\mathbb{P}([\mathcal{D},\mathcal{D}])$ decomposes as 
$\mathbb{P}([\mathcal{D},\mathcal{D}])=\mathbb{P}(\mathcal{D})\cup \mathbb{T}$ into the the $6$-dimensional subset $\mathbb{P}(\mathcal{D})$ of all lines contained in $\mathcal{D},$ and the open subset $\mathbb{T}$ of all lines in $[\mathcal{D},\mathcal{D}]$ transversal to $\mathcal{D}$. The  space $\mathbb{P}(\mathcal{D})$ has an interesting induced geometry, but here we are interested in the complement:

\begin{definition}\label{twistordef}
We call 
%the $7$-manifold
$$\mathbb{T}= \mathbb{P}([\mathcal{D},\mathcal{D}]) \setminus  \mathbb{P}(\mathcal{D})= \cup_{x\in M}\{ \ell_x\subset[\mathcal{D},\mathcal{D}]_x: \ell_x\not\subset\mathcal{D}\}$$
the twistor bundle of the $(2,3,5)$-distribution $\mathcal{D}$.
\end{definition}
\vspace{.05cm}

\begin{remark}~\\ \vspace{-.5cm}
\begin{itemize}
\item Since $\mathcal{D}$ is totally null with respect to $[g]_{\mathcal{D}}$, we can equivalently describe  $\mathbb{T}$ as the space of all non-null lines  contained in $[\mathcal{D},\mathcal{D}]. $
\item Via the conformal structure, we can identify  $\mathcal{P}(T M )$ with $\mathcal{P}(T^*M)$. Under this identification, $\mathbb{T}$ corresponds to the space of lines in the cotangent space that annihilate $\mathcal{D}$ but do not annihilate $[\mathcal{D},\mathcal{D}]$:
$$\mathbb{T}= \mathbb{P}({\mathcal{D}}^{\perp}) \setminus  \mathbb{P}({[\mathcal{D},\mathcal{D}]}^{\perp})=\cup_{x\in M}\{ \ell_x\subset {\mathcal{D}_x}^{\perp}: \ell_x\not\subset{[\mathcal{D},\mathcal{D}]_x}^{\perp}\}\subset \mathbb{P}(T^*M).$$
\end{itemize}
\end{remark}
Among the geometric structures that are naturally present on the twistor bundle $\mathbb{T}$ we are particularly interested in the  rank $6$ sub-bundle 
$$\mathcal{H}=\cup_{\ell\in\mathbb{T}}\{ \xi\in T_{\ell}\mathbb{T} : \pi_{*}(\xi)\in\ell^{\perp} \}, $$ where the orthogonal complement $\ell^{\perp}$ is taken with respect to the conformal class $[g]_{\mathcal{D}}$ on $M$.
Alternatively, if we realize $\mathbb{T}$  inside $\mathbb{P}(T^*M)$, then $\mathcal{H}$ is precisely the intersection of the canonical contact distribution on $\mathbb{P}(T^*M)$ with $T\mathbb{T}$.
Now it is not difficult to see that $\mathcal{H}\subset T\mathbb{T}$ defines   a contact structure on $\mathbb{T}$. In the following we will show more, we will prove that $\mathbb{T}$ has a naturally induced Lie contact structure of signature $(1,2)$.

\subsection{The induced  Lie contact structure }
We shall prove Theorem \ref{thm1.1} using the  descriptions of $(2,3,5)$-distributions and Lie contact structures, respectively, in terms  of Cartan geometries.
There is a very general functorial construction that assigns to a Cartan geometry of some type $(G,P)$ over a manifold $M$ a Cartan geometry of a different type $(\widetilde{G},\widetilde{P})$ over a manifold $\widetilde{M}$. In the context of parabolic geometries these constructions are referred to as Fefferman-type constructions, see  \cite{cap-constructions, cap-slovak-book}. We briefly recall the general principles.
 
Suppose we have an inclusion $i:G\hookrightarrow\widetilde{G}$ of Lie groups, and subgroups $P$ and $\widetilde{P}$ such that the $G$-orbit of $o=e\widetilde{P}\in\widetilde{G}/\widetilde{P}$ is open and  $Q:=i^{-1}(\widetilde{P})\subset G$ is contained in $P$.
Then the construction proceeds in two steps.  Let $(\mathcal{G}\to M,\omega)$ be a Cartan geometry of type $(G,P)$. 
%Denote by $Q$ the subgroup $Q:=i^{-1}(\widetilde{P})\subset P$,
Now form the so-called \emph{correspondence space}
\begin{align}\label{corres}
\widetilde{M}=\mathcal{G}/Q,
\end{align} and regard $\omega\in \Omega^1(\mathcal{G},\mathfrak{g})$ as a Cartan connection on the $Q$-principal bundle $\mathcal{G}\to \widetilde{M}$. Then $(\mathcal{G}\to \widetilde{M},\omega)$ is automatically a Cartan geometry of type $(G,Q)$.
In a second step, extend the structure group $$\widetilde{\mathcal{G}}:=\mathcal{G}\times_{Q}\widetilde{P},$$ such that $\widetilde{\mathcal{G}}\to\widetilde{M}$ is now a $\widetilde{P}$-principal bundle over $\widetilde{M}$. Let $j:\mathcal{G}\to\widetilde{\mathcal{G}}$ be corresponding bundle inclusion. Since the $G$-orbit of $e\widetilde{P}$ in $\widetilde{G}/\widetilde{P}$ is open, there is a unique extension of $\omega$ to a Cartan connection $\widetilde{\omega}\in\Omega^1(\widetilde{\mathcal{G}},\tilde{\g})$ such that
%\begin{align*}
$j^*\widetilde{\omega}=\omega,$
%\end{align*} 
see \cite{cap-slovak-book}. Thus, we obtain a Cartan geometry $(\widetilde{\mathcal{G}}\to \widetilde{M},\widetilde{\omega})$  of type $(\widetilde{G},\widetilde{P})$.

The curvature functions $\widetilde{K}:\widetilde{\mathcal{G}}\to\Lambda^2(\tilde{\g}/\tilde{\p})^*\otimes\tilde{\g}$ and $K:\mathcal{G}\to\Lambda^2(\g/\p)^*\otimes\g$ of the respective Cartan geometries are related as
\begin{align*}
\widetilde{K}\circ j=(\Lambda^2\varphi\otimes i')\circ K,
\end{align*}
where $i':\mathfrak{g}\to \tilde{\mathfrak{g}}$ is the derivative of the Lie group homomorphism $i$ and  $\varphi:(\mathfrak{g}/\mathfrak{p})^*\to (\tilde{\mathfrak{g}}/\tilde{\mathfrak{p}})^*$ is the dual map to the projection $\tilde{\mathfrak{g}}/\tilde{\mathfrak{p}}\cong \mathfrak{g}/\mathfrak{q}\to\mathfrak{g}/\mathfrak{p}$.

Now we specialize to our groups. We take $G=\mathrm{G}_2$ and $\widetilde{G}=\mathrm{O}(3,4)$, so in particular we have an inclusion $i:G\hookrightarrow \widetilde{G}$.  Then we take $P$ to be the parabolic subgroup in $\mathrm{G}_2$ that stabilizes a null-line $\ell\in\mathbb{R}^7$, and $\widetilde{P}$ to be the stabilizer in $\mathrm{O}(3,4)$ of a generic null $2$-plane $\mathbb{E}\subset\mathbb{R}^7$ such that the null-line determined by $\mathbb{E}$ is $\ell$, i.e. $\iota_{\mathbb{E}}\Phi=\ell$. By Proposition \ref{orbits} this  means that the $G$-orbit of $o=e\widetilde{P}\in\widetilde{G}/\widetilde{P}$ is open and the subgroup $Q=i^{-1}(\widetilde{P})$ is contained in the parabolic  $P$. 

Given a $(2,3,5)$ distribution $\mathcal{D}$ with its canonical Cartan geometry $(\mathcal{G}\to M,\omega)$ of type $(G,P)$, it then follows immediately from the general considerations outlined above that there is a naturally associated Cartan geometry $(\widetilde{\mathcal{G}}\to \widetilde{M},\widetilde{\omega})$ of type $(\widetilde{G},\widetilde{P}).$ It remains to show that this Cartan geometry (which is of the right type) determines a Lie contact structure on $\widetilde{M}$. This is the case provided the curvature $\widetilde{K}$ is regular, i.e. $\widetilde{K}(u)(\tilde{\g}^i,\tilde{\g}^j)\subset \tilde{\g}^{i+j+1}$ at any point $u\in\widetilde{\mathcal{G}}$.

 \begin{remark}
To understand the geometric meaning of the regularity condition, note that the Cartan connection $\widetilde{\omega}$ determines an isomorphism
$$T\widetilde{M}\cong\widetilde{\mathcal{G}}\times_{\widetilde{P}}\tilde{\g}/\tilde{\p}$$
and via this isomorphism  the $\widetilde{P}$-invariant subspace $\tilde{\g}^{-1}/\tilde{\p}\subset \tilde{\g}/\tilde{\p}$ gives rise to a rank $6$-subbundle 
 $$\mathcal{H}\cong\widetilde{\mathcal{G}}\times_{\widetilde{P}}\tilde{\g}^{-1}/\tilde{\p}.$$
Now the regularity condition ensures that the bundle map $\mathcal{L}$ on the graded bundle $\mathrm{gr}(T\widetilde{M})=\mathcal{H}\oplus T\widetilde{M}/\mathcal{H}$  induced by the Lie bracket of vector fields coincides with the one induced by the algebraic Lie bracket on $\mathrm{gr}(\tilde{\g}/\tilde{\p})=\tilde{\g}_{-2}\oplus\tilde{\mathfrak{g}}_{-1}$. Inspecting the Lie bracket on $\tilde{\g}_{-2}\oplus\tilde{\mathfrak{g}}_{-1}$ immediately shows that this implies that $\mathcal{L}:\Lambda^2\mathcal{H}\to T\widetilde{M}/\mathcal{H}$ is non-degenerate, i.e., $\mathcal{H}$ is a contact distribution. To see that one indeed gets an induced Lie contact structure, 
note that as a $\widetilde{P}$-representation $\tilde{\g}^{-1}/\tilde{\p}={\mathbb{E}}^*\otimes\mathbb{E}^{\perp}/\mathbb{E}$. See also \cite{cap-slovak-book}.
\end{remark}

\begin{proposition}\label{regular}
Suppose $(\mathcal{G}\to M,\omega)$ is a regular and normal parabolic geometry of type $(G,P)$, then the induced parabolic geometry $(\widetilde{\mathcal{G}}\to \widetilde{M},\widetilde{\omega})$  of type $(\widetilde{G},\widetilde{P})$ is regular.
 In particular it determines a Lie contact structure on the manifold $\widetilde{M}=\mathcal{G}/Q$.
\end{proposition}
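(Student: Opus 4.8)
The plan is to verify regularity of $\widetilde{\omega}$ by a homogeneity argument using the curvature relation
\[
\widetilde{K}\circ j=(\Lambda^2\varphi\otimes i')\circ K
\]
recalled above, together with the hypothesis that $K$ is regular. First I would set up the bookkeeping: the parabolic $\mathfrak{q}=\mathfrak{g}_0\oplus\mathfrak{g}_2\oplus\mathfrak{g}_3$ inside the $(2,3,5)$ parabolic $\mathfrak{p}$ carries the grading induced from $\mathfrak{g}_2$, and the identification $\tilde{\g}/\tilde{\p}\cong\g/\q$ is filtration-preserving. The key point is to compare the $\mathbb{Z}$-grading on $\tilde{\g}$ (the Lie contact grading, which is $2$-graded with $\tilde{\g}_{-2}\oplus\cdots\oplus\tilde{\g}_2$) with the grading on $\g=\g_2$ (which is $3$-graded). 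Under the inclusion $i':\g\hookrightarrow\tilde\g$ and the identifications of the tangent spaces, one must track where each graded piece $\g_i$ lands. Concretely, reading off the displays \eqref{sograd} and \eqref{g2grad}: the $(2,3,5)$ grading components $\g_{-3},\g_{-2},\g_{-1}$ map into $\tilde\g_{-2}$ and $\tilde\g_{-1}$ in a way that is compatible with the filtrations, so that the projection $\varphi$ dualizes correctly.

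The main step is then the following homogeneity comparison. Because $(\mathcal{G}\to M,\omega)$ is regular, $K(u)(X,Y)\in\g^{i+j+1}$ whenever $X\in\g^i$, $Y\in\g^j$; in fact for $(2,3,5)$-distributions the harmonic curvature is the Cartan quartic, which sits in homogeneity $\geq 4$ — but I only need homogeneity $\geq 1$. Transport this through $i'$ and $\varphi$: I would argue that the filtration degree can only go \emph{up} under these maps, i.e. if $\tilde X\in\tilde\g^a$ corresponds to $X\in\g^i$ with $i\geq $ (the degree forced by $a$), and similarly for $\tilde Y$, then $i'(K(u)(X,Y))\in\tilde\g^{i+j+1}\subseteq\tilde\g^{a+b+1}$, giving $\widetilde{K}(j(u))(\tilde X,\tilde Y)\in\tilde\g^{a+b+1}$. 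Since $\widetilde{\mathcal{G}}=\mathcal{G}\times_Q\widetilde{P}$ and $\widetilde{K}$ is $\widetilde{P}$-equivariant with $\widetilde{P}$ preserving the filtration of $\tilde\g$, regularity at points of $j(\mathcal{G})$ propagates to all of $\widetilde{\mathcal{G}}$. This establishes that $\widetilde{\omega}$ is regular, and by Theorem \ref{paraLie} the regular parabolic geometry of type $(\widetilde G,\widetilde P)$ determines a Lie contact structure on $\widetilde M=\mathcal{G}/Q$ (of signature $(1,2)$, since $\mathbb{E}^\perp/\mathbb{E}$ inherits the restriction of $H$, which has signature $(1,2)$ on the relevant $3$-space).

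The hard part will be the filtration-degree comparison in the middle step: one must check that for \emph{every} pair of filtration degrees $a,b$ in the Lie contact grading, the preimage degrees $i,j$ in the $\g_2$-grading satisfy $i+j+1\geq a+b+1$ after applying $\varphi$ and $i'$ — in other words that the "jump" in grading length from a $3$-grading to a $2$-grading does not destroy the homogeneity estimate. This is a finite check, but it is the place where the specific position of $\g_2$ inside $\mathfrak{so}(3,4)$ (as encoded in \eqref{sograd} versus \eqref{sograd1}) genuinely enters, and where one has to be careful that $Q\subset P$ sits correctly so that $\tilde\g^{-1}/\tilde\p$ pulls back to the correct subspace of $\g/\q$. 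I would also remark that an alternative, more hands-on route is available: use the explicit contact distribution $\mathcal{H}$ described geometrically on $\mathbb{T}$ in Section \ref{twistor-bundle} and verify directly that $\mathcal{L}:\Lambda^2\mathcal H\to T\mathbb{T}/\mathcal H$ is nondegenerate, which already forces regularity since the symbol algebra of $\mathcal H$ must then be $\tilde\g_{-}$; but the Cartan-geometric argument above is cleaner and is the one I would write up.
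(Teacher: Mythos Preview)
Your homogeneity-comparison argument has a genuine gap: the ``finite check'' you flag as the hard part actually fails if you use only regularity (homogeneity $\geq 1$) of $K$. The point is that under the identification $\tilde\g/\tilde\p\cong\g/\q$ followed by the projection to $\g/\p$, the contact filtration degree and the $(2,3,5)$ filtration degree are badly mismatched in the wrong direction. Concretely, comparing the gradings \eqref{sograd} and \eqref{g2grad} one sees that $\tilde\g^{-1}/\tilde\p$ corresponds to $(\g_{-3}\oplus\g_{-1}\oplus\p)/\q$, so an element $\tilde X\in\tilde\g^{-1}$ can project to something in $\g_{-3}$, i.e.\ have $\g$-filtration degree $i=-3$. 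For two such inputs regularity gives only $K(X,Y)\in\g^{-5}=\g$, which is no information. Moreover the inclusion $i'$ does \emph{not} satisfy $i'(\g^k)\subset\tilde\g^k$ in the range you need: for instance $\g_{-2}$ has a nonzero $\tilde\g_{-2}$-component, so $\g^{-2}\not\subset\tilde\g^{-1}$. Hence neither half of your inequality $i'(K(u)(X,Y))\in\tilde\g^{i+j+1}\subset\tilde\g^{a+b+1}$ holds.

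The fix, which is exactly what the paper does, is to use more than regularity: since the regular \emph{normal} $(\mathrm{G}_2,P)$ geometry is torsion-free, the curvature $K$ already takes values in $\p$. Then one only needs the single inclusion $\p\subset\tilde\g^{-1}$ (which one reads off from \eqref{sograd} and \eqref{g2grad}: each $\g_i$ with $i\geq 0$ lands in $\tilde\g_{-1}\oplus\tilde\g_0\oplus\tilde\g_1\oplus\tilde\g_2$). Hence $\widetilde K$ has all its values in $\tilde\g^{-1}$, and for a $|2|$-graded contact geometry that is precisely the regularity condition. Ironically, you noted the relevant fact (``the harmonic curvature is the Cartan quartic, which sits in homogeneity $\geq 4$'') and then discarded it; torsion-freeness is exactly the consequence of normality you need, and with it the proof is a one-line filtration check rather than a case analysis.
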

\begin{proof}
It is known, see  \cite{nurowski-metric} or Theorem \ref{g2_gs}, that  the regular, normal Cartan geometry $(\mathcal{G}\to M,\omega)$ associated with a $(2,3,5)$ distribution  is torsion-free, i.e. the curvature function $K$ takes values in $\Lambda^2(\g/\p)^*\otimes\p$. 
%Hence the curvature function of the $(G,Q)$ Cartan geometry $(\mathcal{G}\to \tilde{M},\omega)$ takes values in $\Lambda^2(\g/\q)^*\otimes\p$. 
Via the inclusion $\g\hookrightarrow\tilde{\g},$ the parabolic $\p$ is contained in the $\widetilde{P}$-module $\tilde{\g}^{-1}$, and so the curvature function $\widetilde{K}$ of the  Cartan geometry $(\widetilde{\mathcal{G}}\to \widetilde{M},\widetilde{\omega})$ takes values in $\Lambda^2(\tilde{\g}/\tilde{\p})^*\otimes\tilde{\g}^{-1}.$ This implies that the curvature $\widetilde{K}$ is of homogeneity $\geq 1$, i.e. the geometry is regular. 

\end{proof}

Next we show that $\widetilde{M}$ is the twistor bundle $\mathbb{T}$ as introduced in Definition \ref{twistordef}.
\begin{proposition}
The manifold $\widetilde{M}=\mathcal{G}/Q$ can be naturally identified with the twistor bundle $\mathbb{T}=\cup_{x\in M}\{ \ell_x\in[\mathcal{D},\mathcal{D}]_x: \ell_x\notin\mathcal{D}\}$ of all lines in $[\mathcal{D},\mathcal{D}]$ transversal to $\mathcal{D}$.
\end{proposition}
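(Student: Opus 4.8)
The plan is to identify the correspondence space $\widetilde M=\mathcal{G}/Q$ with $\mathbb{T}$ by exploiting that $Q$ is precisely the stabilizer in $G=\mathrm{G}_2$ of a generic null $2$-plane $\mathbb{E}$, together with the Cartan-geometric description of the distributions $\mathcal{D}\subset[\mathcal{D},\mathcal{D}]$. First I would recall from the equivalence of categories that for the canonical Cartan geometry $(\mathcal{G}\to M,\omega)$ of type $(G,P)$ one has $TM\cong\mathcal{G}\times_P(\g/\p)$, and under this isomorphism $\mathcal{D}$ and $[\mathcal{D},\mathcal{D}]$ correspond to the $P$-invariant subspaces $\g^{-1}/\p$ and $\g^{-2}/\p$ of $\g/\p$ coming from the filtration \eqref{filtration} of $\g$ associated to the $(2,3,5)$-parabolic. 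Equivalently, via the standard representation $\mathbb{V}$ and Proposition \ref{orbits}'s identification $\mathrm{G}_2/P\cong\mathbb{P}(\mathcal{C})$, the fibre of $[\mathcal{D},\mathcal{D}]$ over a point $\ell\in\mathbb{P}(\mathcal{C})$ is identified with $\mathbb{V}^1/\ell$ where $\mathbb{V}^1=\mathrm{span}(e_1,e_2,e_3)$ is the $3$-dimensional filtrant, and the sub-line $\mathcal{D}$ corresponds to $\mathbb{V}^{-1\perp}$ restricted appropriately; this is exactly the content recalled in the second Remark of Section \ref{models}, where $\mathbb{E}\oplus\ell=\mathbb{V}^1$.

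Next I would make the bundle identification precise. Since $\widetilde M=\mathcal{G}/Q$ and $Q\subset P$, there is a natural fibration $\widetilde M=\mathcal{G}/Q\to\mathcal{G}/P=M$ whose fibre is $P/Q$. On the other hand, $\mathbb{P}([\mathcal{D},\mathcal{D}])\to M$ is the projectivization of the associated bundle $[\mathcal{D},\mathcal{D}]\cong\mathcal{G}\times_P(\g^{-2}/\p)$, so its fibre over $x$ is $\mathbb{P}(\g^{-2}/\p)$ with $P$ acting through the quotient $P\to G_0$ (the Levi part), and this linear action is identified via $\g^{-2}/\p\cong\mathbb{V}^1/\ell$ (using the standard representation and $\g^{-2}=\mathbb{V}^{-2\perp}$-type reasoning, cf.\ \eqref{g2grad}) with the $P$-action on the $2$-plane $\mathbb{V}^1/\ell$. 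The subset $\mathbb{T}$ is the complement of $\mathbb{P}(\mathcal{D})$, i.e.\ of the $P$-invariant line $\g^{-1}/\g^0\subset\g^{-2}/\g^0$ inside each projective fibre. The key point is then that $P$ acts transitively on the complement $\mathbb{P}(\g^{-2}/\p)\setminus\mathbb{P}(\g^{-1}/\p)$ — equivalently, on lines in $\mathbb{V}^1/\ell=\mathbb{E}$-type planes transversal to the distinguished line — with stabilizer exactly $Q$. This transitivity-with-stabilizer statement is precisely what Proposition \ref{orbits} gives: $\mathbb{E}\oplus\ell$ is the $3$-plane $\mathbb{V}^1$, the line $\ell$ is the distinguished one (it lies in $\mathcal{D}$), and the stabilizer in $\mathrm{G}_2$ of the generic $2$-plane $\mathbb{E}$ — hence the stabilizer inside $P$ of the transversal line $\mathbb{E}/(\mathbb{E}\cap\ell)$ once one fixes $\ell$ — is $Q=G_0\ltimes\exp(\g_2\oplus\g_3)$. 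Therefore $P/Q$ is equivariantly diffeomorphic to that punctured projective line, and assembling fibrewise over $M$ yields $\mathcal{G}/Q\cong\mathbb{T}$ as bundles over $M$.

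Finally I would note that this identification is natural: it is induced by a $G$-equivariant map on the level of homogeneous models, $\mathrm{G}_2/Q\to\mathbb{P}([\mathcal{D},\mathcal{D}])|_{\mathrm{G}_2/P}$, which by the functoriality of the associated-bundle and correspondence-space constructions globalizes to arbitrary $(2,3,5)$-distributions; equivalently one checks that the two descriptions of the fibre transform the same way under the structure group $P$. I expect the main obstacle to be the careful bookkeeping of which filtrant of $\mathbb{V}$ (equivalently, which graded piece $\g_{i}$ of \eqref{g2grad}) corresponds to $\mathcal{D}$ versus $[\mathcal{D},\mathcal{D}]$, and verifying that the $P$-orbit of a transversal line really has stabilizer $Q$ rather than something slightly larger or smaller — but this is exactly pinned down by Proposition \ref{orbits} and the explicit grading, so the verification is essentially a matching of $P$-representations $\g^{-2}/\p\cong\mathbb{V}^1/\ell$ together with the observation that $\exp(\g_1)$ acts on this $2$-plane by the translations that move the distinguished line to any transversal line, while $\exp(\g_2\oplus\g_3)$ and $G_0$ preserve it — giving $\dim(P/Q)=\dim\g_1=1$, consistent with $\mathbb{T}\to M$ having one-dimensional fibre.
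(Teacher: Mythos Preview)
Your overall strategy is the same as the paper's: write $\widetilde M=\mathcal G\times_P P/Q$, identify $[\mathcal D,\mathcal D]$ and $\mathcal D$ with the $P$-modules $\g^{-2}/\p$ and $\g^{-1}/\p$, and then show that $P$ acts transitively on lines in $\g^{-2}/\p$ transversal to $\g^{-1}/\p$ with stabilizer $Q=G_0\ltimes\exp(\g_2\oplus\g_3)$. That core argument is correct and matches the paper.

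However, there are genuine dimension errors in your execution that you must fix. First, $\dim\g_1=2$, not $1$ (in the $(2,3,5)$ grading \eqref{g2grad} the component $\g_1$ consists of two root spaces), so $\dim(P/Q)=2$. Correspondingly, the fibre of $\mathbb T\to M$ is $\mathbb P(\g^{-2}/\p)\setminus\mathbb P(\g^{-1}/\p)$, the complement of a projective line in a projective plane, hence $2$-dimensional --- not a ``punctured projective line'' as you write. Second, your attempted identification $\g^{-2}/\p\cong\mathbb V^1/\ell$ cannot be right as stated: the left side is $3$-dimensional while $\mathbb V^1/\ell$ is $2$-dimensional. Relatedly, ``the transversal line $\mathbb E/(\mathbb E\cap\ell)$'' does not make sense, since $\mathbb E\cap\ell=0$ and $\mathbb E$ itself is a $2$-plane. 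The detour through the standard representation $\mathbb V$ is not needed here and is introducing confusion; the paper works entirely inside $\g/\p$. The clean statement (which is what the paper proves) is: the line $(\g_{-2}+\p)/\p\subset\g^{-2}/\p$ is transversal to $\g^{-1}/\p$, it is fixed by $G_0$ and by $\exp(\g_2\oplus\g_3)$, while $\exp(\g_1)$ acts on $\g^{-2}/\p$ by maps sending this line into $\g^{-1}/\p$, so $P$ acts simply transitively (modulo $Q$) on transversal lines. Once you correct the dimensions and drop the faulty $\mathbb V$-identifications, your argument is complete.
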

\begin{proof}
By definition,
\begin{align*}
\widetilde{M}=\mathcal{G}/Q=\mathcal{G}\times_{P}P/Q.
\end{align*}
Let  $\g^{-1}/\p\subset \g^{-2}/\p\subset\g^{-3}/\p$ be the $P$-invariant filtration on $\g/\p$. To prove the proposition it remains to identify the homogeneous space $P/Q$ with the set of lines in $\g^{-2}/\p$ that are not contained in $\g^{-1}/\p$. 

We have noticed in the proof of Proposition \ref{orbits} that $Q=G_0\ltimes \mathrm{exp}(\mathfrak{g}_2\oplus\mathfrak{g}_3)$ for some subgroup $G_0\cong P/P_+$ and corresponding $G_0$-invariant grading $\g_{-3}\oplus\g_{-2}\oplus\g_{-1}\oplus\g_0\oplus\g_1\oplus\g_2\oplus\g_3$. Now $\mathrm{exp}(\mathfrak{g}_2\oplus\mathfrak{g}_3)$ acts trivially on $\mathfrak{g}^{-2}/\mathfrak{p}$ and $G_0$ preserves the 
 line $\ell=(\g_{-2}+\mathfrak{p})/\mathfrak{p}$ (and acts non-trivially on it).
%decomposition $\mathfrak{g}^{-2}/\mathfrak{p}=\ell\oplus\g^{-1}/\p$, where  $\ell$ denotes the line $\ell=(\g_{-2}+\mathfrak{p})/\mathfrak{p}$. 
On the other hand, the action identifies $\mathrm{exp}(\g_1)$ with the space of linear maps from $\ell$ to $\g^{-1}/\p$. It follows that the $P$-action  is transitive on lines in $\g^{-2}/\mathfrak{p}$ not contained in $\g^{-1}/\mathfrak{p}$ and the stabilizer of $\ell$ as above is the subgroup $Q$.

\end{proof}
In particular, we have proven Theorem \ref{thm1.1}.

\begin{remark} In \cite{cap-slovak-book} a construction from conformal structures to Lie contact structures is presented, which generalizes the work of Miyaoka, Sato and Yamaguchi  \cite{miyaoka1,miyaoka2, Sato-yama2}.
Note that the Lie contact structure constructed here is different from the Lie contact structure associated with the conformal structure $[g_{\mathcal{D}}]$ following their construction. The latter one lives on a $9$-dimensional manifold, ours on a $7$-manifold.
\end{remark}

\subsection{Additional structure on the twistor bundle} One immediately observes that the Lie contact structures obtained from $(2,3,5)$-distributions are special. In particular, besides $\mathcal{H}$,
there are several other naturally defined distributions on $\mathbb{T}$.  First there is the vertical bundle 
%$\mathcal{V}\subset T \widetilde{M}$ 
$$\mathcal{V}=\cup_{\ell\in \mathbb{T}}\{ \xi\in T_{\ell}\mathbb{T}: \pi_{*}(\xi)=0 \}$$
for the projection $\pi:\mathbb{T}\to M$, which has rank $2$. Then there are 
the lifts of  $\mathcal{D}$ and $[\mathcal{D},\mathcal{D}]$,
$$\widetilde{\mathcal{D}}=\cup_{\ell\in \mathbb{T}}\{ \xi\in T_{\ell}\mathbb{T} : \pi_{*}(\xi)\in\mathcal{D} \},$$
and 
$$\widetilde{[\mathcal{D},\mathcal{D}]}=\cup_{\ell\in \mathbb{T}}\{ \xi\in T_{\ell}\mathbb{T}: \pi_{*}(\xi)\in[\mathcal{D},\mathcal{D}] \},$$
 which are bundles of of ranks  $4$ and $5$, respectively.
Finally, there is  a rank $3$ distribution 
$$\mathcal{S}=\cup_{\ell\in \mathbb{T}}\{ \xi\in T_{\ell}\mathbb{T}: \pi_{*}(\xi)\in\ell \}, $$
called the prolongation of $\mathcal{D}$.

These distributions can be  understood as follows: Since  $(\widetilde{\mathcal{G}}\to\widetilde{M},\widetilde{\omega})$ arises as the extension of a Cartan geometry $(\mathcal{G}\to\widetilde{M},\omega)$ of type $(G,Q)$, we have an isomorphism
\[T\widetilde{M}\cong\mathcal{G}\times_{Q}\g/\q\]
via the Cartan connection $\omega$. In particular,  every $Q$-invariant subspace of $\mathfrak{g}/\mathfrak{q}$ corresponds to a natural subbundle of $T\widetilde{M}$.  
The vertical bundle $\mathcal{V}$ corresponds to $\p/\q,$ the rank $3$ bundle $\mathcal{S}$ corresponds to $(\g_{-2}+\p)/\q,$ the contact subbundle $\mathcal{H}$ corresponds to $(\g_{-3}\oplus\g_{-1}\oplus \p)/\q,$ and $\widetilde{\mathcal{D}}$ and $\widetilde{[\mathcal{D},\mathcal{D}]}$ correspond to  $\g^{-1}/\q$ and $\g^{-2}/\q$, respectively. 
We can visualize these $Q$-submodules using the root diagram for $\mathrm{G}_2$.
\begin{center}

\begin{tikzpicture}[scale=1]

\filldraw(0,0) circle(0.15);

\draw[thick] (0,-1.732) -- (0,1.732);
 \filldraw(0,1.732)circle(0.09);
 \filldraw[blue](0,-1.732)circle(0.09);
 
 \draw[thick](-1.5,-0.866)--(1.5,0.866);
\filldraw[blue](-1.5,-0.866)circle(0.09);
 \filldraw(1.5,0.866)circle(0.09);
 
 \draw[thick](1.5,-0.866)--(-1.5,0.866);
\filldraw(-1.5,0.866)circle(0.09);
 \filldraw(1.5,-0.866)circle(0.09);

\draw[thick](-1,0)--(1,0);
\filldraw[yellow](-1,0)circle(0.09);
 \filldraw[red](1,0)circle(0.09);
 
 \draw[thick](0.5,-0.866)--(-0.5,0.866);
 \filldraw[yellow](0.5,-0.866)circle(0.09);
 \filldraw[red](-0.5,0.866)circle(0.09);
 
  \draw[thick](-0.5,-0.866)--(0.5,0.866);
 \filldraw[green](-0.5,-0.866)circle(0.09);
 \filldraw(0.5,0.866)circle(0.09);

 \draw (2.1,1.2) node {\small {$\g_3$}};
 \draw (-2.1,-1.2) node {\small{$-\g_3$}};
 \draw (0.9,1.2) node {\small{$\g_2$}};
  \draw (-0.8,-1.2) node {\small{$-\g_2$}};
  %\draw[purple,rotate=-30,yshift=28pt] (-1.2,0.15);
  % rectangle (1.2,-0.15); % g_2
 \draw (1.55,0) node {\small{$\g_1$}};   
  \draw (-1.55,0) node {\small{$- \g_1$}};   
\draw(2.1,-1.2) node {\small{$-\g_2$}} ;
\draw(-2.1,1.2) node {\small{$\g_2$}} ;
 \draw(0.8,-1.2) node {\small{$-\g_1 $}};
  \draw(-0.8,1.2) node {\small{$\g_1 $}};
 \draw (0,-2.1) node {\small{$-\g_3$}};
 \draw (0,2.1) node {\small{$\g_3$}};
\end{tikzpicture}
\end{center}

\section{The exterior differential system and examples }
%{The construction via the corresponding exterior differential system}
\label{secExamples}

Here we present a slightly different viewpoint on the construction of Lie contact structures from $(2,3,5)$ distributions, complementing the picture from the previous section. 
First we  present the structure equations, or exterior differential system (EDS),  for $(2,3,5)$ distributions. Then we  show how they can be applied to (locally) construct  the induced Lie contact structures in terms of a conformal symmetric  rank $4$ tensor on the contact distribution $\mathcal{H}$. This viewpoint has the advantage that it leads to explicit formulae and enables us, for instance, to solve the symmetry equations for a given structure.
 This is carried out  for a special class of distributions  parametrized by functions $F(q)=\frac{q^k}{k(k-1)}$.
 % $F=h(q)$   of a single variable $q$.
 %, which enables us the determine the symmetry algebras for the constructed geometries.

\subsection{The EDS for a $(2,3,5)$ distribution}
The EDS for a generic $(2,3,5)$ distribution was first introduced by Cartan in \cite{cartan-cinq}, and was then modified in \cite{nurowski-metric} to get a form adapted to the corresponding (reduced to $\mathfrak{g}_2$) normal conformal Cartan connection. Here we have rewritten the system from \cite{nurowski-metric} changing the notation to be more suitable to the contact structures we consider in this paper. The changes in notations with respect to \cite{nurowski-metric} are as follows: 
 $$\begin{tabular}{|c|c|} \hline
 1-forms in \cite{nurowski-metric} & the respective 1-forms in this paper \\ \hline\vspace{-.3cm}
 \phantom{}&\phantom{}\\
 $\theta^1, \theta^2, \theta^3, \theta^4, \theta^5$& $\theta^1, \theta^2, \theta^0, \theta^3, \theta^4$\\
$\Omega_5, \Omega_6$&$3\theta^6, 3\theta^5$\\
$\Omega_7, \Omega_8, \Omega_9$&$\Omega_5, \Omega_6, \Omega_7$\\\hline
  \end{tabular}$$
\begin{theorem}\label{g2_gs}
A $(2,3,5)$-distribution $\mathcal{D}$ on a $5$-manifold $M$ uniquely defines a $14$-dimensional bundle $P\to \mathcal{G}\to M$ together with a rigid coframe $(\theta^0,\theta^1,\theta^2,\theta^3,\theta^4,$ $\theta^5,\theta^6,\Omega^1,\Omega^2,\Omega^3,\Omega^4,\Omega^5,\Omega^6,\Omega^7)$  on it satisfying the following exterior differential system (EDS):
$$
%{\small
\begin{aligned}
\der\theta^0&=\theta^0\dz(\Omega_1+\Omega_4)+3\theta^1\dz\theta^6+3\theta^2\dz\theta^5+\theta^3\dz\theta^4,\\
\der\theta^1&=\theta^0\dz\theta^3+\theta^1\dz(2\Omega_1+\Omega_4)+\theta^2\dz\Omega_2,\\
\der\theta^2&=\theta^0\dz\theta^4+\theta^1\dz\Omega_3+\theta^2\dz(\Omega_1+2\Omega_4),\\
\der\theta^3&=4\theta^0\dz\theta^5+\theta^1\dz\Omega_5+\theta^3\dz\Omega_1+\theta^4\dz\Omega_2,\\
 \der\theta^4&=-4\theta^0\dz\theta^6+\theta^2\dz\Omega_5+\theta^3\dz\Omega_3+\theta^4\dz\Omega_4,\\
% \end{aligned}} $$
% $${\small  \begin{aligned}
  \der\Omega_1&=-\Omega_2\dz\Omega_3-\tfrac13\Omega_5\dz\theta^0-\Omega_6\dz\theta^1-2\theta^3\dz\theta^6+\theta^4\dz\theta^5\\
  &-b_2\theta^0\dz\theta^1-b_3\theta^0\dz\theta^2+\tfrac38 c_2\theta^1\dz\theta^2+a_2\theta^1\dz\theta^3\\
 & +a_3(\theta^1\dz\theta^4+\theta^2\dz\theta^3)+a_4\theta^2\dz\theta^4,\\
    \der\Omega_2&=-\Omega_1\dz\Omega_2-\Omega_2\dz\Omega_4-\Omega_7\dz\theta^1-3\theta^3\dz\theta^5\\ 
  &-b_3\theta^0\dz\theta^1-b_4\theta^0\dz\theta^2+\tfrac38 c_3\theta^1\dz\theta^2+a_3\theta^1\dz\theta^3\\
&  +a_4(\theta^1\dz\theta^4+\theta^2\dz\theta^3)+a_5\theta^2\dz\theta^4,\\
 %\end{aligned} $$
% $$  \begin{aligned}
\der\Omega_3&=\Omega_1\dz\Omega_3+\Omega_3\dz\Omega_4-\Omega_6\dz\theta^2-3\theta^4\dz\theta^6\\
 &+b_1\theta^0\dz\theta^1+b_2\theta^0\dz\theta^2-\tfrac38 c_1\theta^1\dz\theta^2-a_1\theta^1\dz\theta^3\\
 &-a_2(\theta^1\dz\theta^4+\theta^2\dz\theta^3)-a_3\theta^2\dz\theta^4,\\
  \end{aligned} $$
 $$  \begin{aligned}
   \der\Omega_4&=\Omega_2\dz\Omega_3-\tfrac13\Omega_5\dz\theta^0-\Omega_7\dz\theta^2+\theta^3\dz\theta^6-2\theta^4\dz\theta^5\\
 &+b_2\theta^0\dz\theta^1+b_3\theta^0\dz\theta^2-\tfrac38 c_2\theta^1\dz\theta^2-a_2\theta^1\dz\theta^3\\
 &-a_3(\theta^1\dz\theta^4+\theta^2\dz\theta^3)-a_4\theta^2\dz\theta^4,\\
  \der\theta^5&=\Omega_2\dz\theta^6+\Omega_4\dz\theta^5-\tfrac13\Omega_5\dz\theta^3-\tfrac13\Omega_7\dz\theta^0\\
 &-\tfrac14 c_2\theta^0\dz\theta^1-\tfrac14 c_3\theta^0\dz\theta^2+e_1\theta^1\dz\theta^2\\
 &+\tfrac14 b_2\theta^1\dz\theta^3+\tfrac14 b_3(\theta^1\dz\theta^4+\theta^2\dz\theta^3)+\tfrac14 b_4\theta^2\dz\theta^4,\\
\der\theta^6&=\Omega_1\dz\theta^6+\Omega_3\dz\theta^5+\tfrac13\Omega_5\dz\theta^4-\tfrac13\Omega_6\dz\theta^0\\
 &-\tfrac14 c_1\theta^0\dz\theta^1-\tfrac14 c_2\theta^0\dz\theta^2+e_2\theta^1\dz\theta^2\\
 &+\tfrac14 b_1\theta^1\dz\theta^3+\tfrac14 b_2(\theta^1\dz\theta^4+\theta^2\dz\theta^3)+\tfrac14 b_3\theta^2\dz\theta^4,\\
  \der\Omega_5&=\Omega_1\dz\Omega_5+\Omega_4\dz\Omega_5-\Omega_6\dz\theta^3-\Omega_7\dz\theta^4-12\theta^5\dz\theta^6\\
 &+4 e_2\theta^0\dz\theta^1+4 e_1\theta^0\dz\theta^2+f\theta^1\dz\theta^2-\tfrac38 c_1\theta^1\dz\theta^3\\
 &-\tfrac38 c_2(\theta^1\dz\theta^4+\theta^2\dz\theta^3)-\tfrac38 c_3\theta^2\dz\theta^4,\\
%   \end{aligned}
%   %} 
%   $$
% $$
% %{\small 
%  \begin{aligned}
\der\Omega_6&=2\Omega_1\dz\Omega_6+\Omega_3\dz\Omega_7+\Omega_4\dz\Omega_6-3\Omega_5\dz\theta^6\\
 &-p_1\theta^0\dz\theta^1-p_2\theta^0\dz\theta^2+q_1\theta^1\dz\theta^2+h_1\theta^1\dz\theta^3\\
 &+h_2(\theta^1\dz\theta^4+\theta^2\dz\theta^3)+h_3\theta^2\dz\theta^4,\\
\der\Omega_7&=\Omega_1\dz\Omega_7+\Omega_2\dz\Omega_6+2\Omega_4\dz\Omega_7-3\Omega_5\dz\theta^5\\
 &-\tfrac13(2f+3p_2)\theta^0\dz\theta^1-p_3\theta^0\dz\theta^2+q_2\theta^1\dz\theta^2+(h_2-e_2)\theta^1\dz\theta^3\\
 &+(h_3-e_1)(\theta^1\dz\theta^4+\theta^2\dz\theta^3)+h_4\theta^2\dz\theta^4.
\end{aligned}
%}
$$
The functions $a_1,a_2,a_3,a_4,a_5,   b_1,b_2,b_3,b_4,  c_1,c_2,c_3,  e_1,e_2,  f,  q_1,q_2,  p_1,p_2,p_3,  h_1,h_2,h_3,h_4$ appearing in the EDS can be understood as the curvature coefficients of the normal Cartan connection $\omega\in\Omega^1(\mathcal{G}, \mathfrak{g}_2)$ associated with the distribution $\mathcal{D}$. In terms of the rigid coframe the Cartan normal connection $\omega$ reads
\be
\omega=\begin{pmatrix}
   -\Omega^1-\Omega^4& -2\theta^6&-12\theta^5&-2\Omega^5&\Omega^6&-6\Omega^7&0\\
   -\frac{1}{2}\theta^3&-\Omega^4&6\Omega^2&-6\theta^5&\frac{1}{2}\Omega^5&0&6\Omega^7\\
   -\frac{1}{12}\theta^4&\frac{1}{3}\Omega^3&-\Omega^1&\theta^6&0&-\frac{1}{2}\Omega^5&-\Omega^6\\
   \frac{1}{3}\theta^0&-\frac{1}{3}\theta^4&2\theta^3&0&-\theta^6&6\theta^5&2\Omega^5\\
   -\theta^1&-\frac{2}{3}\theta^0&0&-2\theta^3&\Omega^1&-6\Omega^2&12\theta^5\\
   \frac{1}{6}\theta^2&0&\frac{2}{3}\theta^0&\frac{1}{3}\theta^4&-\frac{1}{3}\Omega^3&\Omega^4&2\theta^6\\
   0&-\frac{1}{6}\theta^2&\theta^1&-\frac{1}{3}\theta^0&\frac{1}{12}\theta^4&\frac{1}{2}\theta^3&\Omega^1+\Omega^4\\
\end{pmatrix}.
\label{carcon235}
\ee
The curvature $K$ of the connection $\omega$ is of the form
$$K=\frac{1}{2}K_{ij}\theta^i\wedge\theta^j,\quad {\rm where}\quad i,j=0,1,2,3,4,$$
and the above EDS is the same as 
$$d\omega=-\omega\wedge\omega+\frac{1}{2} K_{ij}\theta^i\wedge\theta^j.$$
%with the functions $(a_1,a_2,a_3,a_4,a_5)$, $(b_1,b_2,b_3,b_4)$, $(c_1,c_2,c_3)$, $(e_1,e_2)$, $f$, $(q_1,q_2),$ $(p_1,p_2,p_3),$ $(h_1,h_2,h_3,h_4)$ being the respective coefficients of the $\mathfrak{g}_2$-valued curvature functions $K_{ij}$.
%Assume in addition that the 1-forms $(\theta^0,\theta^1,\theta^2,\theta^3,\theta^4,\theta^5,\theta^6,$ $\Omega_1,\Omega_2,\Omega_3,\Omega_4,\Omega_5,\Omega_6,\Omega_7)$ satisfy the independence condition 
%\be
%\theta^0\dz\theta^1\theta^2\dz\theta^3\dz\theta^4\dz\theta^5\dz\theta^6\dz\Omega_1\dz\Omega_2\dz\Omega_3\dz\Omega_4\dz\Omega_5\dz\Omega_6\dz\Omega_7\neq 0,\label{np3}\ee
%on $\tilde{{\mathcal U}^5}$. 
%
%
%Then, there is a \emph{unique} solution $(\theta^0,\theta^1,\theta^2,\theta^3,\theta^4,\theta^5,\theta^6,$ $\Omega_1,\Omega_2,\Omega_3,\Omega_4,\Omega_5,$ $\Omega_6,\Omega_7)$, $(a_1,a_2,a_3,a_4,a_5)$, $(b_1,b_2,b_3,b_4)$, $(c_1,c_2,c_3)$, $(e_1,e_2)$, $f$, $h_1,h_2,h_3,h_4,$ $h_5,h_6,k_1,k_2,k_3$ to the system (\ref{np0})-(\ref{np3}) with the 1-forms $(\theta^0,\theta^1,\theta^2,\theta^3,\theta^4)$ as given in (\ref{su}) and (\ref{s3}). 
\end{theorem}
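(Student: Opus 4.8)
The plan is to obtain the statement from the equivalence of categories between $(2,3,5)$-distributions and regular, normal parabolic geometries of type $(\mathrm{G}_2,P)$, together with the explicit matrix model \eqref{g2matrices} of $\g_2$. By that equivalence, $\mathcal{D}$ determines a canonical --- hence unique --- regular, normal Cartan geometry $(\mathcal{G}\to M,\omega)$ of type $(\mathrm{G}_2,P)$; since $\dim\mathrm{G}_2=14$ and $\dim P=9$, the total space $\mathcal{G}$ is $14$-dimensional and $P\to\mathcal{G}\to M$ is a principal bundle. Fixing the realization \eqref{g2matrices}, write $\omega$ as a matrix of real-valued $1$-forms and let $(\theta^0,\dots,\theta^6,\Omega^1,\dots,\Omega^7)$ be its components along the linear coordinates $(b^0,\dots,q^7)$ on $\g_2$, so that by construction $\omega$ has the matrix form \eqref{carcon235}. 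That these $14$ one-forms constitute a coframe on $\mathcal{G}$ is immediate from the Cartan connection axiom that $\omega_u\colon T_u\mathcal{G}\to\g_2$ is a linear isomorphism for each $u$; the coframe is rigid because the normal Cartan connection is unique, leaving no residual freedom.

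Next I would record the structure equation. For any Cartan connection $K=d\omega+\tfrac12[\omega,\omega]$, and under the matrix realization $\tfrac12[\omega,\omega]$ is the matrix wedge product $\omega\dz\omega$, so $d\omega=-\omega\dz\omega+K$. The curvature form of a Cartan connection is horizontal, and by torsion-freeness of the normal Cartan geometry of a $(2,3,5)$-distribution (part of Cartan's analysis, see \cite{cartan-cinq, nurowski-metric}) it takes values in $\p\subset\g_2$; hence $K=\tfrac12 K_{ij}\theta^i\dz\theta^j$ with $i,j\in\{0,1,2,3,4\}$, these five forms being those dual to $\g/\p$ in the grading \eqref{g2grad}, and the $K_{ij}$ being $\p$-valued --- which is why no curvature terms appear in $d\theta^0,\dots,d\theta^4$ in the EDS. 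Expanding the matrix identity $d\omega=-\omega\dz\omega+\tfrac12 K_{ij}\theta^i\dz\theta^j$ entry by entry, using \eqref{carcon235}, yields one scalar equation for each coframe form, which is the displayed system.

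What remains is to identify the curvature entries. Normality, $\partial^*\circ K=0$ with $\partial^*$ the Kostant codifferential \eqref{KostantCo}, forces $K$ into $\ker\partial^*$ within $\Lambda^2(\g/\p)^*\otimes\p$; its harmonic part is, for this parabolic, the Cartan quartic $\mathcal{C}\in\Gamma(S^4\mathcal{D}^*)$, a $5$-dimensional object accounting for $a_1,\dots,a_5$, while the remaining functions $b_i,c_i,e_i,f$ and $p_i,q_i,h_i$ are the suitably weighted components of covariant derivatives of $\mathcal{C}$, placed in the positions shown by the Bianchi identity $d(d\omega)=0$. For the present theorem it is enough to know the EDS holds with these functions; the Bianchi relations among them are not needed. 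In practice, rather than redo this from scratch, I would start from the EDS and normal $\g_2$-connection already established in \cite{cartan-cinq, nurowski-metric} and check only that the notational substitution in the dictionary table carries that system into the one above and that \eqref{carcon235} indeed lies in \eqref{g2matrices} --- both finite comparisons of matrix entries. Reassembling the entrywise equations gives the final assertion $d\omega=-\omega\dz\omega+\tfrac12 K_{ij}\theta^i\dz\theta^j$.

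The main obstacle is bookkeeping rather than conceptual: one must expand a $7\times 7$ matrix equation while tracking the normalizations in \eqref{carcon235} (the coefficients $-2,-12,-6,\tfrac13,\dots$) and confirm that the curvature functions enter with exactly the stated numerical factors ($\tfrac38$, $\tfrac14$, $4$, $12$, $\dots$); delegating this verification to \cite{cartan-cinq, nurowski-metric} reduces the proof to the two mechanical checks just described.
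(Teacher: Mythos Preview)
Your proposal is correct and matches the paper's own treatment: the paper does not prove Theorem~\ref{g2_gs} from scratch but states that the EDS is the system of \cite{nurowski-metric} (itself a reworking of \cite{cartan-cinq}) rewritten via the displayed dictionary table, which is precisely the verification you reduce to in your final paragraph. Your added conceptual framing---the equivalence of categories, the Cartan-connection isomorphism giving the coframe, horizontality and torsion-freeness forcing $K=\tfrac12 K_{ij}\theta^i\wedge\theta^j$ with $i,j\in\{0,\dots,4\}$---is accurate and more explicit than what the paper itself records, but the substantive argument in both cases is the mechanical comparison with \cite{nurowski-metric}.
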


\subsection{From the EDS to  underlying structures}\label{edsd}
Suppose that the fourteen 1-forms $(\theta^0,\dots,\theta^6, \\ \Omega_1,\dots,\Omega_7)$ on $\mathcal{G}$ are linearly independent at each point, $\theta^0\dz\dots\theta^6\dz\Omega_1\dz\dots\dz\Omega_7\neq 0$, and  satisfy the EDS as in Theorem \ref{g2_gs}. 

\subsubsection{The underlying $(2,3,5)$-distribution and conformal metric} On the one hand,  we easily conclude the following:
\begin{itemize}
\item ${\mathcal G}$ is locally foliated by 9-dimensional submanifolds  tangent to the distribution $\mathcal{P}$ defined as the annihilator of the basis 1-forms $(\theta^0,\theta^1,\theta^2,\theta^3,\theta^4)$. That $\mathcal{P}$ is integrable follows immediately from the EDS, since it guarantees that
  $$\der\theta^k\dz\theta^0\dz\theta^1\dz\theta^2\dz\theta^3\dz\theta^4=0,\quad\quad\forall k=0,1,2,3,4.$$
\item The rank 2 distribution $\bar{\mathcal D}$ on ${\mathcal G}$ annihilated by the forms $(\theta^0,\theta^1,\theta^2,\theta^5\theta^6,\Omega_1,$ $\dots\Omega_7)$,
  $$\bar{\mathcal{D}}=\mathrm{ker}(\theta^0,\theta^1,\theta^2,\theta^5, \theta^6,\Omega_1,\dots,\Omega_7),$$ descends to a well defined rank 2-distribution ${\mathcal D}=\pi_*\bar{\mathcal D}$ on the  space $M={\mathcal G}/P$ of leaves of the distribution $\mathcal{P}$. 
  To see that this is the case,  consider the frame  $(X_0,\dots,X_6,Y_1,\dots,Y_7)$ dual to the coframe forms on ${\mathcal G}$. Then $\bar{\mathcal D}$ is spanned by $X_3$ and $X_4$, $$\bar{\mathcal D}=\Span(X_3,X_4).$$ To show that $\bar{\mathcal D}$  projects to a well-defined rank 2-distribution $M$ it is enough to show that, at each point of ${\mathcal G}$, the Lie derivatives of $X_3$ and $X_4$ with respect to the fiber directions $X_5,X_6,Y_1,Y_2,Y_3,Y_4,Y_5,Y_6,Y_7$ are spanned by no other vectors than the distribution  vectors, $X_3,X_4$, and the vertical vectors $X_5,X_6,Y_1,Y_2,Y_3,Y_4,Y_5,Y_6,Y_7$. Dually, this precisely means that in the considered EDS the terms $\theta^3\dz\theta^5$, $\theta^3\dz\theta^6$, $\Omega_i\dz\theta^3$, $\theta^4\dz\theta^5$, $\theta^4\dz\theta^6$, $\Omega_i\dz\theta^4$, $i=1,2,\dots,7$, cannot appear in the exterior derivatives of the forms $\theta^0$, $\theta^1$ and $\theta^2$. This is  the case for the EDS from Theorem \ref{g2_gs}.
  
   The  distribution ${\mathcal D}=\pi_*\bar{\mathcal D}$ on $M$ is $(2,3,5)$, since the EDS from Theorem \ref{g2_gs} guarantees the following expressions for the commutators $[X_3,X_4]=-X_0$, $[X_3,X_0]=X_1$ and $[X_4 ,X_0]=X_2$, where equality is considered \emph{modulo  terms vertical with respect to $\pi$}. 
  %Noting that after the projection to $M^5$ we have $\pi_*X_0\dz\pi_*X_1\dz\pi_*X_2\dz\pi_*X_3\dz\pi_*X_4\neq 0$ proves the claim in this item.  
  \item The conformal class of $(3,2)$ signature metrics $[g_{\mathcal D}]$ is represented by the bilinear form
    $$g_{\mathcal D}=\tfrac43(\theta^0)^2+2\theta^1\theta^4-2\theta^2\theta^3.$$
    The EDS from Theorem \ref{g2_gs} guarantees that the Lie derivatives of $g_{\mathcal D}$ with respect to its degenerate directions spanned by $X_5,X_6,Y_1,\dots,Y_7$ are always multiples of $g_{\mathcal D}$. Thus $g_{\mathcal D}$ descends to a well defined conformal class $[g_{\mathcal D}]$ of $(3,2)$ signature metrics on $M={\mathcal G}/{\mathcal P}$. 
\end{itemize}
\subsubsection{The corresponding Lie contact structure and  $(3,5,7)$-distribution}\label{underlying_Liecont}
On the other hand, the EDS in  from Theorem \ref{g2_gs}  can be viewed quite differently:
\begin{itemize}
\item Consider the rank 7-distribution
  ${\mathcal{Q}}$ on $\mathcal{G}$ defined as the annihilator of the
 seven linearly independent 1-forms $\theta^A$, $A=0,1,2,3,4,5,6$. This distribution is integrable  due to
  $$\der\theta^A\dz\theta^0\dz\theta^1\dz\theta^2\dz\theta^3\dz\theta^4\dz\theta^5\dz\theta^6=0,\quad \forall A=0,1,2,3,4,5,6.$$
   As such, it defines a foliation of ${\mathcal G}$ by 7-dimensional leaves, and a fibration
  $$Q\to{\mathcal G}\stackrel{\sigma}{\to}{\widetilde{M}}={\mathcal G}/Q,$$
  over the 7-dimensional leaf space $\widetilde{M}={\mathcal G}/Q$.
  \item The rank 6 distribution $\bar{\mathcal H}$ on ${\mathcal G}$ annihilated by the forms $(\theta^0,\Om_1,\dots,\Om_7)$,
  $$\bar{\mathcal H}=\mathrm{ker}(\theta^0,\Om_1,\dots,\Om_7),$$
    descends to a well defined rank 6 distribution ${\mathcal H}=\sigma_*\bar{\mathcal H}$ on the leaf  space $\widetilde{M}$. 
    
Moreover, using the EDS from Theorem \ref{g2_gs} and a similar reasoning as before, we easily show that the rank 6 distribution ${\mathcal H}=\sigma_*\bar{\mathcal H}$ is indeed a \emph{contact} distribution on $\widetilde{M}$. The one-form $\theta^0$ descends from ${\mathcal G}$ to a well-defined  \emph{line} of   contact forms $[\lambda]$ on $\widetilde{M}$.
\item Again using the EDS from Theorem \ref{g2_gs}, we show that the contact distribution $\mathcal H$ on $\widetilde{M}$ is equipped with additional structure. Consider the 2-form
\be
\rho=3\theta^1\dz\theta^6+3\theta^2\dz\theta^5+\theta^3\dz\theta^4,\label{rhok}\ee
and the symmetric rank 4 tensor
\be\begin{aligned}\label{Upsilonk}
  \Upsilon=2\theta^2(\theta^3){}^2&\theta^6-3(\theta^1){}^2(\theta^6){}^2-2\theta^1\theta^3\theta^4\theta^6-6\theta^1\theta^2\theta^5\theta^6+\\&2\theta^2\theta^3\theta^4\theta^5-2\theta^1(\theta^4){}^2\theta^5-3(\theta^2){}^2(\theta^5){}^2.\end{aligned}\ee 
Then the Lie derivatives of $\rho$ and $\Upsilon$ with respect to  the fiber directions $Y_A$  are
  $$\begin{aligned}{\mathcal L}_{Y_A}\rho=u_A \rho+
  \theta^0\dz\alpha_A \quad{\mathrm{and}}\quad
    {\mathcal L}_{Y_A}\Upsilon=v_A \Upsilon+\theta^0\odot\gamma_A,\end{aligned}$$
  where $u_A,v_A$ are functions, $\alpha_A$ are 1-forms,  and $\gamma_A$ are symmetric rank 3 tensors.     Since $\theta^0$ annihilates the distribution $\mathcal{H}$, $\rho$ and $\Upsilon$ descend to the respective objects $[\rho]$ and $[\Upsilon]$ on the distribution $\mathcal H$, where they are defined up to a scale, because some of the $u_A$, $v_A$ are non-zero. (In fact, the class of $\rho$ on $\mathcal H$ can be represented by $\der\theta^0$, so this is a line of symplectic forms on $\mathcal{H}$.)

\item The rank 3 distribution $\bar{\mathcal{S}}$ on the Cartan bundle $\mathcal{G}$ defined as
$$\bar{\mathcal S}=\mathrm{ker}(\theta^1,\theta^2,\theta^3,\theta^4,\Omega_1,\dots,\Omega_7)=\Span(X_0,X_5, X_6),$$
descends to a well-defined rank $3$-distribution $\mathcal{S}=\sigma_*\bar{\mathcal{S}}$ on $\widetilde{M}.$
This can be seen from the fact that 
 in the EDS from Theorem \ref{g2_gs} the exterior derivatives of the forms $\theta^1$, $\theta^2$, $\theta^3$ and $\theta^4$ do not contain terms of the form $\theta^0\wedge\Omega_i$, $\theta^5\wedge\Omega_i$ and $\theta^6\wedge\Omega_i$.
 
  One easily checks using the system that $[X_5,X_6]=0$, $[X_0,X_5]=-4X_3$, $[X_0,X_6]=4 X_4$, $[X_0,X_3]=-X_1$, $[X_0,X_4]=-X_2$ \emph{modulo vertical terms}. This shows that the first commutator $[\mathcal{S},\mathcal{S}]$ has rank $5$ (and is equal to  the lift $\widetilde{[\mathcal{D},\mathcal{D}]}$ of $[\mathcal{D},\mathcal{D}]$), and $[\mathcal{S},[\mathcal{S},\mathcal{S}]]=T\widetilde{M}.$ In particular,  the distribution $\mathcal{S}$ has growth vector $(3,5,7)$.
\end{itemize}

%\begin{remark}
Locally, the structure on $\widetilde{M}$ described above in terms of the contact distribution $\mathcal{H}$ equipped with the line of symmetric rank 4 tensors $[\Upsilon]$  is equivalent to a Lie contact structure as introduced in section \ref{liecontdef}. To see this, one needs to show that $[\Upsilon]$ reduces the structure group of the natural frame bundle of the contact distribution to the correct group $\widetilde{G}_0$. Now one verifies directly that the subalgebra of $\mathfrak{gl}(6)$  stabilizing $[\rho]$ and $[\Upsilon]$ is precisely   $\tilde{\g}_0=\mathfrak{gl}(2,\mathbb{R})\oplus\mathfrak{so}(1,2)$ in the proper representation.
%, and recall that $\mathfrak{sl}(2,\mathbb{R})\cong\mathfrak{so}(1,2)$.

\begin{remark}
Recall that $\mathfrak{so}(1,2)\cong\mathfrak{sl}(2,\mathbb{R})$. To see, algebraically,  where the tensor comes from, one can first verify  that there is precisely one trivial summand in the decomposition of the $\mathfrak{sl}(2,\mathbb{R})\oplus\mathfrak{sl}(2,\mathbb{R})$-representation $S^4(\mathbb{R}^2\boxtimes S^2\mathbb{R}^2)$ into irreducible components. Next, it is also not difficult to construct the invariant rank 4 tensor. Write an element $\psi\in\mathbb{R}^2\boxtimes S^2\mathbb{R}^2$ using index notation as ${\psi}^{A \dot{B}}$ and define a map
\begin{align*}
L(\psi):\mathbb{R}^2\to\mathbb{R}^2,\ \  {L(\psi)^{\dot{C}}}_{\dot{H}}=\psi^{A\dot{B}\dot{C}}\psi^{D\dot{E}\dot{F}}\epsilon_{AD}\epsilon_{\dot{B}\dot{E}}\epsilon_{\dot{F}\dot{H}},
\end{align*}
for volume forms  $\epsilon_{AB}\in\Lambda^2 \mathbb{R}^2$ and $\epsilon_{\dot{A}\dot{B}}\in\Lambda^2\mathbb{R}^2$. It turns out that the trace of this map is zero, but the trace of its square is not, and the unique up to constants invariant symmetric rank 4 tensor is 
\begin{align*}
\Upsilon(\psi)= \mathrm{Tr}(L(\psi)\circ L(\psi)).
\end{align*}
\end{remark}

\subsection{A particular solution  to the EDS in dimension $7$}\label{sformula}
Next we construct  the 1-forms $(\theta^0,\theta^1,\dots,\theta^6)$ explicitly with respect to a section  for a special class of distributions. In particular, this yields an explicit local description of the induced Lie contact structure (which however is not so simple even for the nice class of distributions that we consider).

Recall that we can specify a $(2,3,5)$-distribution ${\mathcal D}_F$ defined in a neighbourhood $\mathcal{U}^5$ around the origin of $\mathbb{R}^5$ with local coordinates $(x,y,p,q,z)$ by specifying a single function of five variables $F=F(x,y,p,q,z)$ such that $F_{qq}\neq 0$.  Let us consider a 
differentiable function $F=h(q)$ of one variable $q$ only. We assume that $h''\neq 0$. Then the distribution  $\mathcal D_{h}$ is given as the kernel  of the three one-forms
$$\omega^0=\der p-q\der x,\quad \om^1=\der y-p\der x, \quad \om^2=\der z-h\der x.$$
 The one-forms $(\omega^0,\omega^1,\omega^2)$ can be supplemented to a coframe $(\omega^i)$, $i=0,1,2,3,4$, on ${\mathcal U}^5$ given by:
\be
\begin{aligned}
\omega^0&=\der p-q\der x,\\
\om^1&=\der y-p\der x,\\
\om^2&=\der z-h\der x,\\
\om^3&=\der q,\\
\om^4&=\der x.
\end{aligned}
\label{s0}\ee
Now one introduces forms
\be
\bma\theta^0\\\theta^1\\\theta^2\\\theta^3\\\theta^4\ema=
\bma u_1&u_2&u_3&0&0\\
u_4&u_5&u_6&0&0\\
u_7&u_8&u_9&0&0\\
u_{10}&u_{11}&u_{12}&u_{13}&u_{14}\\
u_{15}&u_{16}&u_{17}&u_{18}&u_{19}
\ema\bma\om^0\\\om^1\\\om^2\\\om^3\\\om^4\ema,\label{s3}
\ee
with the 19 free parameters $(u_1,u_2,\dots, u_{19})$. It follows that there exists a choice of these parameters, in which the forms $(\theta^0,\theta^1,\dots,\theta^4)$ satisfy the EDS as in Theorem \ref{g2_gs}, with corresponding functions $(a_1, a_2, \dots, h_3,h_4)$ and 1-forms $(\theta^5,\theta^6,\Omega_1,\Omega_2,\dots,\Omega_7)$, such that
$$\theta^0\dz\theta^1\dz\theta^2\dz\theta^3\dz\theta^4\dz\theta^5\dz\theta^6\neq 0,$$
and
$$\Omega_i\dz \theta^0\dz\theta^1\dz\theta^2\dz\theta^3\dz\theta^4\dz\theta^5\dz\theta^6\equiv 0, \quad\forall i=1,2,\dots,7.$$
This means that there is an effective algorithm of solving the EDS of Theorem \ref{g2_gs} for forms $(\theta^0,\theta^1,\dots,\theta^6,\Omega_1,\Omega_2,\dots,\Omega_7)$ and the coefficients $(a_1,a_2,\dots,h_3,h_4)$ on a certain seven dimensional manifold, which we below parametrized by $(x,y,p,q,z,v,w)$. 

Explicitly, the forms  corresponding to this choice are given below:
\be{\small
\begin{aligned}\label{dupa}
  \theta^0&=\frac{v  {h''}^{4/3}}{9 w^4}\der y+\frac{ {h''}^{4/3}}{9 w^4}\der z-\frac{  \left(w+h'\right) {h''}^{4/3}}{9 w^4}\der p-\frac{ \left(v p-w q+h-q h'\right) {h''}^{4/3}}{9 w^4}\der x \\
  \theta^1&=-\frac{p  {h''}^{4/3}}{27 w^4} \der x+\frac{  {h''}^{4/3}}{27 w^4}\der y\\
  \theta^2&=\frac{v   {h''}^{5/3}}{27 w^5}\der y+\frac{  {h''}^{5/3}}{27 w^5}\der z-\frac{  h' {h''}^{5/3}}{27 w^5}\der p-\frac{  \left(v p+h-q h'\right) {h''}^{5/3}}{27 w^5}\der x  \\
     \theta^3&=\frac{v   h''}{3 w^3}\der y-\frac{  \big(-20 {h''}^4-4 w^2 {h^{(3)}}^2+3 w^2 h'' h^{(4)}\big)}{90 w^3 {h''}^3}\der z-\\&\frac{  \big(40 w {h''}^4+20 h' {h''}^4+10 w^2 {h''}^2 h^{(3)}+4 w^2 h' {h^{(3)}}^2-3 w^2 h' h'' h^{(4)}\big)}{90 w^3 {h''}^3}\der p+\\&\frac{1}{90 w^3 {h''}^3}  \big(30 w^2 {h''}^3-30 v p {h''}^4+40 w q {h''}^4-20 h {h''}^4+20 q h' {h''}^4+10 w^2 q {h''}^2 h^{(3)}-\\&4 w^2 h {h^{(3)}}^2+4 w^2 q h' {h^{(3)}}^2+3 w^2 h h'' h^{(4)}-3 w^2 q h' h'' h^{(4)}\big)\der x\\
       \theta^4&=-\frac{{h''}^{4/3}}{3 w^2}\der q+\frac{v^2   {h''}^{4/3}}{9 w^4}\der y-\frac{v   \big(-10 {h''}^4-4 w^2 {h^{(3)}}^2+3 w^2 h'' h^{(4)}\big)}{90 w^4 {h''}^{8/3}}\der z+\\&\frac{v  \big(-10 h' {h''}^4-10 w^2 {h''}^2 h^{(3)}-4 w^2 h' {h^{(3)}}^2+3 w^2 h' h'' h^{(4)}\big)}{90 w^4 {h''}^{8/3}}\der p-\\&\frac{v}{90 w^4 {h''}^{8/3}}   \big(-30 w^2 {h''}^3+10 v p {h''}^4+10 h {h''}^4-10 q h' {h''}^4-10 w^2 q {h''}^2 h^{(3)} +\\&4 w^2 h {h^{(3)}}^2-4 w^2 q h' {h^{(3)}}^2-3 w^2 h h'' h^{(4)}+3 w^2 q h' h'' h^{(4)}\big)\der x\\
  \theta^5&=\frac{\der w }{{h''}^{1/3}}+\frac{\big(10 {h''}^4-10 w {h''}^2 h^{(3)}+4 w^2 {h^{(3)}}^2-3 w^2 h'' h^{(4)}\big)}{30 {h''}^{10/3}}\der q+\\&\frac{v   \big(-5 {h''}^6+40 w^3 {h^{(3)}}^3-45 w^3 h'' h^{(3)} h^{(4)}+9 w^3 {h''}^2 h^{(5)}\big)}{90 w^2 {h''}^{16/3}}\der z+\\&\frac{v}{90 w^2 {h''}^{16/3}}   \big(-15 w {h''}^6+5 h' {h''}^6-12 w^3 {h''}^2 {h^{(3)}}^2-40 w^3 h' {h^{(3)}}^3+9 w^3 {h''}^3 h^{(4)}+\\&45 w^3 h' h'' h^{(3)} h^{(4)}-9 w^3 h' {h''}^2 h^{(5)}\big)\der p-\frac{v}{90 w^2 {h''}^{16/3}}   \big(-15 w q {h''}^6-5 h {h''}^6+5 q h' {h''}^6-\\&12 w^3 q {h''}^2 {h^{(3)}}^2+40 w^3 h {h^{(3)}}^3-40 w^3 q h' {h^{(3)}}^3+9 w^3 q {h''}^3 h^{(4)}-45 w^3 h h'' h^{(3)} h^{(4)}+\\&45 w^3 q h' h'' h^{(3)} h^{(4)}+9 w^3 h {h''}^2 h^{(5)}-9 w^3 q h' {h''}^2 h^{(5)}\big)\der x\\
         % \end{aligned} \ee
 % \be{\small  \begin{aligned}
  \theta^6&=-\der v +\frac{v^2  \big(-4 {h^{(3)}}^2+3 h'' h^{(4)}\big)}{90 w {h''}^3}\der z +\frac{v \big(10 {h''}^4-4 w^2 {h^{(3)}}^2+3 w^2 h'' h^{(4)}\big)}{30 w {h''}^3}\der q+\\&\frac{v^3   \big(40 {h^{(3)}}^3-45 h'' h^{(3)} h^{(4)}+9 {h''}^2 h^{(5)}\big)}{90 {h''}^5}\der y-\frac{v^2}{90 w^2 {h''}^5}   \big(5 {h''}^6-10 w {h''}^4 h^{(3)}-\\&12 w^2 {h''}^2 {h^{(3)}}^2-4 w h' {h''}^2 {h^{(3)}}^2+40 w^3 {h^{(3)}}^3+9 w^2 {h''}^3 h^{(4)}+3 w h' {h''}^3 h^{(4)}-45 w^3 h'' h^{(3)} h^{(4)}+\\&9 w^3 {h''}^2 h^{(5)}\big)\der p-\frac{v^2}{90 w^2 {h''}^5}   \big(30 w {h''}^5-5 q {h''}^6+10 w q {h''}^4 h^{(3)}+12 w^2 q {h''}^2 {h^{(3)}}^2-\\&4 w h {h''}^2 {h^{(3)}}^2+4 w q h' {h''}^2 {h^{(3)}}^2+40 v w^2 p {h^{(3)}}^3-40 w^3 q {h^{(3)}}^3-9 w^2 q {h''}^3 h^{(4)}+3 w h {h''}^3 h^{(4)}-\\&3 w q h' {h''}^3 h^{(4)}-45 v w^2 p h'' h^{(3)} h^{(4)}+45 w^3 q h'' h^{(3)} h^{(4)}+9 v w^2 p {h''}^2 h^{(5)}-9 w^3 q {h''}^2 h^{(5)}\big)\der x.
\end{aligned}  } \ee
We could also write down the remaining forms $(\Omega_1,\Omega_2,\dots,\Omega_7)$ that together with the above $(\theta^0,\theta^1,\dots,\theta^6)$ satisfy the EDS from Theorem \ref{g2_gs}, but since they are not interesting for the rest of our paper we will skip them. 

\subsection{Examples}
 The particular solution $(\theta^0,\theta^1,\dots,\theta^6)$ constructed above, enables us to write down  the structural tensors associated with the $(2,3,5)$ distribution
  \begin{align}\label{distribution_example}
  {\mathcal D}_h~=~\Span(~\partial_x+p\partial_y+q\partial_p+h(q)\partial_z,~\partial_q~),\end{align}
    explicitly in the coordinates $(x,y,p,q,z;v,w)$. 
  %   In particular, one obtains an explicit formula for the conformal symmetric  fourth rank tensor  $\Upsilon$  on the contact distribution $\mathcal{H}=\mathrm{ker}(\theta^0)$. 
   
    It should be clear that the coordinates $(x,y,p,q,z)$ parametrize the 5-manifold $M$ on which the distribution ${\mathcal D}_h$ resides, and that  $(v,w)$ are the fiber coordinates of the bundle $\widetilde{M}\to M$. In particular, $(v,w)$ locally parameterize  directions $\ell(v,w)=\mathrm{dir}(\xi(v,w))$ in the 3-distribution $[{\mathcal D}_h,{\mathcal D}_h]$ as follows:
    $$\xi(v,w)=\partial_x+p\partial_y+q\partial_p+h\partial_z+\frac{v}{h''}\partial_q+\frac{w}{h''}(\partial_p+h'\partial_z).$$
Note that in this parametrization the directions \emph{transverse} to the 2-distribution ${\mathcal D}_h$ have $w\neq 0$, and that $w\equiv 0$ corresponds to the directions in the 2-distribution ${\mathcal D}_h$. Thus, when the coordinate $w\to 0$ we approach points $(x,y,p,q,z,v)$ of the 6-dimensional boundary $\mathbb{P}({\mathcal D}_h)$ of $\widetilde{M}\cong\mathbb{P}([{\mathcal D}_h,{\mathcal D}_h])\setminus\mathbb{P}({\mathcal D}_h)$.

In the following we will restrict our examples to the distributions $\mathcal{D}_{h}$ with
\be\label{h_example}
h(q)=\frac{1}{k (k-1)}q^k, \quad \mathrm{where} \quad k\in\mathbb{R}, \quad k\neq 0, 1.
\ee
Since in such case ${\mathcal D}_h$ is totally determined by a real number $k$, we will denote these distributions by ${\mathcal D}_k$. We have excluded the cases $k= 0, 1$ because they do not correspond to $(2,3,5)$ distributions. 
\subsubsection{Conformal metric on $M$} 
For the class of examples given by \eqref{h_example} the conformal class of metrics $[g_{\mathcal{D}_{k}}]$ may be represented by
%%/home/pawel/worksheets/distribution_C4/totally_new_examples_q_to_k_paper_metric.mw
    \be\begin{aligned}\label{metric_example}
      g_{{\mathcal D}_k}&=(k-1)^2(9k^2-9k+2)q^2\der x^2-2k(k-1)(9k^2-9k-8)q\der x \der p+\\&30k^2(k-1)^2p \der x \der q-4k(k-1)^2(3k^2+2k-1)q^{2-k}\der x \der z-\\&30k^2(k-1)^2\der y\der q+k^2(9k^2-9k+2)\der p^2+\\&4k^2(k-1)(3k^2-8k+4)q^{1-k}\der p\der z-k^2(k-1)^2(k^2-k-2)q^{2-2k}\der z^2.\end{aligned}\ee
It is well known \cite{nurowski-metric} that this metric is conformally flat if and only if the corresponding distribution $\mathcal{D}_{k}$ is flat, and this happens \cite{cartan-cinq} precisely in the four cases when $k\in\{2,\tfrac{2}{3},\tfrac{1}{3},-1\}$. For example for $k=2$ we get the conformally flat metric 
    \be
      g_{{\mathcal D}_2}=4\Big(30  \der x \der z-5q^2 \der x^2-20  \der p^2+10 q \der p \der x-30 p  \der q \der x+30  \der q \der y\Big).\label{conffl}\ee

Now if $k\notin\{2,\tfrac{2}{3},\tfrac{1}{3},-1\}$ the distribution $\mathcal{D}_{k}$ has  $7$-dimensional symmetry algebra (the submaximal dimension) spanned by

\begin{align*}
&X_1=\partial_x,\quad X_2=\partial_y,\quad X_3=\partial_z,\quad X_4=\partial_p+x \partial_y,\\
&X_5=x\partial_x-p\partial_p-2 q \partial_q+(1-2k)z\partial_z,\quad X_6=y\partial_y+p\partial_p+q\partial_q+k z \partial_z,\\
&X_7=q^{k-1}\partial_x+\big(p q^{k-1}+(1-k)z\big)\partial_y+\frac{k-1}{k}q^k\partial_p+\frac{q^{2k-1}}{k(2k-1)}\partial_z.
\end{align*}
The conformal class represented by \eqref{metric_example}  has $9$-dimensional symmetry algebra,   spanned by $X_1,\dots, X_7$ and the two \emph{additional} generators

\begin{align*}
&X_8=q^{-\frac{1}{2}+\frac{\sqrt{10k^2-10k+5}}{10}}\big(\partial_x+p\partial_y+\tfrac{3k^2-2\sqrt{10k^2-10k+5}-3k+4}{(3k-2)(k-2)}q\partial_p+2\tfrac{4k^2-4k+2-k\sqrt{10k^2-10k+5}}{(3k-2)(k-2)k(k-1)}q^{-k}\partial_z\big),\\
&X_9=q^{-\frac{1}{2}+\frac{\sqrt{10k^2-10k+5}}{10}}\big(\partial_x+p\partial_y+\tfrac{3k^2+2\sqrt{10k^2-10k+5}-3k+4}{(3k-2)(k-2)}q\partial_p+2\tfrac{4k^2-4k+2+k\sqrt{10k^2-10k+5}}{(3k-2)(k-2)k(k-1)}q^{k}\partial_z\big).\\
\end{align*}

It is instructive to look at the symmetries in one of the flat cases, say $k=2$. One sees that in this case $X_8$ and $X_9$ are singular, but the rescaling by a factor $(k-2)$ regularizes them at $k=2$. These however, in the limit $k\to 2$, lead to one  symmetry only, namely to $Z_1=\mathrm{lim}_{k\to 2}X_9=\partial_p+q\partial_z$, since the limit of the regularized $X_8$ is zero. In this case the 8 conformal symmetries $(X_1,X_2,\dots,X_7,X_9{}')$ are of course extendible to the full 21-dimensional algebra of symmetries $\soa(3,4)$.

      We close this section providing the full algebra of symmetries of the distribution ${\mathcal D}_k$ with $k=2$ and the full algebra of conformal symmetries of $[g_{{\mathcal D}_k}]$ in such case. On top of the 7 symmetries $(X_1,X_2,\dots, X_7)$ with $k=2$ this distribution has additional 7 symmetries, so that its full algebra of symmetries has dimension 14. The remaining 7 symmetries look as follows:
      \be\begin{aligned}
        Y_1=&\tfrac12 x^2\partial_y+x\partial_p+\partial_q+p\partial_z,\\
        Y_2=&\tfrac16 x^3\partial_y+\tfrac12x^2\partial_p+x\partial_q+(xp-y)\partial_z,\\
        Y_3=& x^2\partial_x+3xy\partial_y+(3y+xp)\partial_p+(4p-qx)\partial_q+2p^2\partial_z,\\
        Y_4=& (8p-6qx)\partial_x+(4p^2+6xz-6pqx)\partial_y+(6z-3q^2x)\partial_p-2q^2\partial_q-q^3x\partial_z,\\
        Y_5=& (16xp-12y-6qx^2)\partial_x+(6x^2z+8p^2x-6pqx^2)\partial_y+(12xz+4p^2-3q^2x^2)\partial_p+\\&(12z+4pq-4q^2x)\partial_q+(12pz-q^3x^2)\partial_z,\\
        Y_6=& (24px^2-6qx^3-36xy)\partial_x+(12p^2x^2+6x^3z-36y^2-6pqx^3)\partial_y+\\&(12p^2x+18x^2z-3q^2x^3-36py)\partial_p+(12pqx-6q^2x^2-24p^2+36xz)\partial_q+\\&(36pxz-8p^3-q^3x^3-36yz)\partial_z,\\
        Y_7=& (12p^2-18qy)\partial_x+(8p^3-18pqy+18yz)\partial_y+(18pz-9q^2y)\partial_p+(18qz-6pq^2)\partial_q+\\&(18z^2-3q^3y)\partial_z.
        \end{aligned}\label{next7}\ee
The 14-dimensional Lie algebra spanned by $(X_1,X_2,\dots,X_7,Y_1,Y_2,\dots,Y_7)$ is isomorphic to the split real form of the exceptional simple Lie algebra $\mathfrak{g}_2$. As for the conformal symmetries of $[g_{{\mathcal D}_2}]$: we have the 14 conformal symmetries of the distribution, $(X_1,X_2,\dots,X_7,Y_1,Y_2,\dots,Y_7)$, forming the Lie algebra of $\mathfrak{g}_2$, but also 7 additional conformal symmetries given by:
$$\begin{aligned}
  Z_1=&\partial_p+q\partial_z,\\
  Z_2=&\partial_x+p\partial_y+\tfrac34q\partial_p+\tfrac14q^2\partial_z,\\
  Z_3=&\partial_q+\tfrac14x\partial_p+\tfrac14qx\partial_z,\\
  Z_4=&4px\partial_y+(3qx+6p)\partial_p+12q\partial_q+(q^2x+2qp+12z)\partial_z,\\
  Z_5=&4x^2\partial_x+4px^2\partial_y+(3qx^2+4px-6y)\partial_p+8(qx-p)\partial_q+(q^2x^2+4xqp-6qy)\partial_z,\\
  Z_6=&12qx\partial_x+(12xqp+8p^2-12xz)\partial_y+(6q^2x+12qp)\partial_p+12q^2\partial_q+(2q^3x+4q^2p+12qz)\partial_z,\\
  Z_7=&4(px-3y)\partial_x+4p(px-3y)\partial_y+(3xqp-2p^2-9qy+3xz)\partial_p+(2q^2x-8qp+12z)\partial_q+\\&(xpq^2-2p^2q-3yq^2+3zqx)\partial_z.
\end{aligned}
  $$
  The 21-dimensional algebra generated by $(X_1,X_2,\dots,X_7,Y_1,Y_2,\dots,Y_7,Z_1,\dots,Z_7)$ is isomorphic to the Lie algebra $\soa(3,4).$ 
  %and is the full algebra of conformal symmetries of the conformal structure $[g_{{\mathcal D}_2}]$. 

\subsubsection{The Lie contact structure}
The Lie contact structure $([\lambda],[\Upsilon])$ on $\widetilde{M}$ associated with the distribution $\mathcal{D}_h$ is totally expressible in terms of the forms $(\theta^0,\theta^1,\dots,\theta^6)$ as written in Section \ref{sformula}, formulas (\ref{rhok}), (\ref{Upsilonk}). For $h(q)=\frac{q^k}{k(k-1)}$, the line of contact forms can be represented by
\be
  \lambda=\der z- ( w+\frac{q^{k-1}}{k-1})\der p +v \der y + (w q -v p +\frac{q^k}{k})\der x.\label{cfk}
\ee
To get this, we took $\theta^0$ from (\ref{dupa}), calculated it for $h=\frac{q^k}{k(k-1)}$ and rescaled, so that the term at $\der z$ is equal to one.
One easily checks that
\begin{align*}
\der \lambda\wedge\der \lambda\wedge\der\lambda\wedge\lambda=-6 w \der x\wedge\der y\wedge\der p\wedge\der q\wedge\der z \wedge \der v \wedge\der w,
\end{align*}
so $\lambda$ is a contact form everywhere on $\widetilde{M}$ except the boundary $w=0$. Even in the simple case we are considering, the structural tensor $\Upsilon$ on ${\mathcal D}_k$, when written via the formula \eqref{Upsilonk} in coordinates $(x,y,p,q,z,v,w)$, has a very ugly look. For this reason we will not write it here. Instead we determine the symmetries of the Lie contact structure $([\lambda],[\Upsilon])$ on $\widetilde{M}=\mathbb{P}([{\mathcal D}_k,{\mathcal D}_k])\setminus\mathbb{P}({\mathcal D}_k)$ with this ugly $\Upsilon$.

In general, an infinitesimal symmetry of a Lie contact structure $([\lambda],[\Upsilon])$ on $\widetilde{M}$ is a vector field $X$ on $\widetilde{M}$ such that
\be
(\mathcal{L}_X\lambda)\wedge\lambda=0,\quad \mathrm{and}\quad\mathcal{L}_X\Upsilon=f \Upsilon +\lambda\odot\tau,\label{symeq}
\ee
 where $\tau$ is a rank $3$ tensor and $f$ is a function on $\widetilde{M}$. We calculated the infinitesimal symmetries of the Lie contact structure $([\lambda],[\Upsilon])$ with $\lambda$ as in (\ref{cfk}) and $\Upsilon$ determined by (\ref{Upsilonk}), (\ref{dupa}) with $h=\frac{q^k}{k(k-1)}$, obtaining the following proposition.
%%/home/pawel/worksheets/distribution_C4/totally_new_examples_q_to_k_paper_metric_solving_1.mw
      \begin{proposition}\label{prop_symmetries}
        If $k\notin\{2,\tfrac23,\tfrac13,0,1,-1\}$ the algebra of infinitesimal symmetries of the Lie contact structure $([\lambda],[\Upsilon])$ on $\widetilde{M}=\mathbb{P}([{\mathcal D}_k,{\mathcal D}_k])\setminus\mathbb{P}({\mathcal D}_k)$ is seven dimensional and is spanned by the infinitesimal symmetries:
        \begin{align*}\begin{aligned}
          \tilde{X}_1=&\partial_x,\quad\tilde{X}_2=\partial_y,\quad\tilde{X}_3=\partial_z,\quad\tilde{X}_4=\partial_p+x\partial_y,\\
          \tilde{X}_5=&x\partial_x-p\partial_p-2q\partial_q+(1-2k)z\partial_z+(1-2k)v\partial_v+2(1-k)w\partial_w,\\
          \tilde{X}_6=&y\partial_y+p\partial_p+q\partial_q+kz\partial_z+(k-1)v\partial_v+(k-1)w\partial_w,\\
          \tilde{X}_7=&q^{k-1}\partial_x+\big(pq^{k-1}+(1-k)z\big)\partial_y+\frac{k-1}{k}q^k\partial_p+\frac{q^{2k-1}}{k(2k-1)}\partial_z+(1-k)v^2\partial_v+(1-k)vw\partial_w.
                  \end{aligned}\end{align*}
      \end{proposition}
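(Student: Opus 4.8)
The plan is to prove the two bounds $\dim\ge 7$ and $\dim\le 7$ separately. For the first I would observe that $\tilde X_1,\dots,\tilde X_7$ are exactly the canonical lifts to $\widetilde M=\mathbb P([\mathcal D_k,\mathcal D_k])\setminus\mathbb P(\mathcal D_k)$ of the infinitesimal symmetries $X_1,\dots,X_7$ of $\mathcal D_k$ written down in Section~\ref{secExamples}: each $\tilde X_i$ restricts to $X_i$ along the $(x,y,p,q,z)$-directions, and its $(v,w)$-components record the infinitesimal action of the flow of $X_i$ on the line $\ell(v,w)=\mathrm{dir}(\xi(v,w))$ inside $[\mathcal D_k,\mathcal D_k]$, in the parametrisation of Section~\ref{sformula}. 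Since, by Theorem~\ref{thm1.1}, the Lie contact structure $([\lambda],[\Upsilon])$ on $\widetilde M$ is constructed functorially from $\mathcal D_k$, every symmetry of $\mathcal D_k$ lifts to a symmetry of $([\lambda],[\Upsilon])$, and the lift is injective; hence $\mathrm{span}(\tilde X_1,\dots,\tilde X_7)$ is a $7$-dimensional space of symmetries, these fields being linearly independent whenever $k\neq 0,1$. Alternatively one substitutes $\tilde X_1,\dots,\tilde X_7$, together with $\lambda$ from \eqref{cfk} and $\Upsilon$ from \eqref{Upsilonk}, \eqref{dupa} with $h(q)=q^k/(k(k-1))$, directly into \eqref{symeq} and reads off the accompanying function $f$ and rank-$3$ tensor $\tau$ in each case.

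\textbf{There are no further symmetries.} For $\dim\le 7$ I see two routes. The \emph{direct} one exploits that the first equation in \eqref{symeq} forces $X$ to preserve the contact distribution $\mathcal H=\ker\lambda$, so $X=X_H$ is determined by a single contact Hamiltonian $H$ on $\widetilde M$; substituting $X_H$ into the second equation of \eqref{symeq} and recasting it as a condition on the induced conformal rank-$4$ tensor $[\Upsilon]$ on $\mathcal H$ yields an overdetermined linear PDE system for $H$, of finite type; one prolongs and solves it — necessarily by computer algebra, given the shape of $\Upsilon$ in the coordinates $(x,y,p,q,z,v,w)$ — and finds that its solution space is the $7$-dimensional $\mathrm{span}(\tilde X_1,\dots,\tilde X_7)$ precisely when $k\notin\{2,\tfrac{2}{3},\tfrac{1}{3},-1\}$. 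The \emph{conceptual} route uses that, by construction, $(\widetilde{\mathcal G}\to\widetilde M,\widetilde\omega)$ is the extension of a correspondence space of the $(2,3,5)$ Cartan geometry over $M$, and that by Proposition~\ref{prop-hol} its Cartan holonomy lies in $\mathrm G_2$. Provided this holonomy is \emph{all} of $\mathrm G_2$ — which is the case because $\mathcal D_k$ is non-flat for the admissible $k$ — the $\mathrm G_2$-holonomy reduction is unique, so every automorphism of $([\lambda],[\Upsilon])$ preserves it, hence preserves the distinguished rank-$2$ distribution $\mathcal V$ of Theorem~\ref{thm1.2} (here the vertical bundle of $\pi:\widetilde M\to M$) and descends to an automorphism of $\mathcal D_k$; since a correspondence space admits no automorphisms acting along the fibres \cite{cap-slovak-book}, this descent is injective, so $\dim\le 7$, the symmetry algebra of $\mathcal D_k$ being $7$-dimensional (the submaximal value) for exactly these $k$ by Cartan's classification as recalled in Section~\ref{secExamples}, and the seven lifts of the previous step already exhaust it.

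\textbf{Main obstacle, and the excluded parameters.} In the direct route the only real difficulty is bulk: $\Upsilon$ is very unwieldy even for $h(q)=q^k/(k(k-1))$, so the prolonged system is large and must be organised carefully, for instance by treating the $k$-independent terms and the $q^{k}$-terms separately. In the conceptual route the delicate point is the claim that the Cartan holonomy of $\widetilde\omega$ is exactly $\mathrm G_2$ and not a proper subgroup: without uniqueness of the holonomy reduction an automorphism of the Lie contact structure need not preserve it and the descent argument collapses, so establishing full holonomy for the non-flat $\mathcal D_k$ is the crux of that argument. Finally I would record why the excluded parameters are genuinely different: for $k\in\{2,\tfrac{2}{3},\tfrac{1}{3},-1\}$ the distribution $\mathcal D_k$, hence by Theorem~\ref{thm1.1} the Lie contact structure, is flat and the symmetry algebra jumps to the full $21$-dimensional $\mathfrak{so}(3,4)$, whereas $k\in\{0,1\}$ do not define $(2,3,5)$-distributions.
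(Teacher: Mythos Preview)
Your direct route is exactly what the paper does: the authors state explicitly (in the text preceding the proposition and in the remark following it) that they obtained the result ``by directly solving the symmetry equations \eqref{symeq} and finding all their solutions'', emphasising that they did \emph{not} assume the ansatz $\tilde X_i=X_i+a_i\partial_v+b_i\partial_w$. So your first route, including the observation that the resulting symmetries are the prolongation lifts of the $X_i$, matches the paper both in method and in commentary.

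Your conceptual route is a genuinely different idea and would be attractive if it worked, but the gap you identify is real and not merely a technicality. The implication ``$\mathcal D_k$ non-flat $\Rightarrow$ $\mathrm{Hol}(\widetilde\omega)=\mathrm G_2$'' is neither proved in the paper nor a general fact: a non-flat parabolic geometry can have Cartan holonomy strictly smaller than the full group. Establishing full holonomy for these particular $\mathcal D_k$ would itself require computing enough of the curvature and its covariant derivatives to see that they generate $\mathfrak g_2$, which is comparable in effort to the direct calculation. Without it, the uniqueness of the parallel compatible tractor $3$-form fails and an automorphism of the Lie contact structure need not preserve the distinguished $\mathrm G_2$-reduction, so the descent to $M$ is not forced. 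Your injectivity step (no automorphisms along the fibres of a correspondence space) is fine, but it only bounds the kernel, not the image, until the preservation of $\mathcal V$ is secured. In short: the conceptual argument trades one hard computation for another of similar size, and the paper opts for the straightforward one.
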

      
       \begin{remark}
        Note that the seven symmetries $(\tilde{X}_1,\tilde{X}_2,\dots,\tilde{X}_7)$ above correspond to the seven symmetries $(X_1,X_2,\dots,X_7)$ of the distribution ${{\mathcal D}_k}$ defining the Lie contact structure $([\lambda],[\Upsilon])$. Explicitly note that we have: $\tilde{X}_i=X_i+a_i\partial_v+b_i\partial_w$, $i=1,2,\dots,7$, with specific functional cooeficients $a_i$ and $b_i$. We remark that we obtained $(\tilde{X}_1,\tilde{X}_2,\dots,\tilde{X}_7)$ by directly solving the symmetry equations (\ref{symeq}), and \emph{not} by assuming that the symmetry $\tilde{X}_i$ have the form $\tilde{X}_i=X_i+a_i\partial_v+b_i\partial_w$.

        There is, however, more direct way of getting these seven symmetries. This is related to the general fact that every symmetry of a $(2,3,5)$ distribution $\mathcal D$ induces a symmetry of its twistor Lie contact structure. The simplest way of seeing this is via the \emph{prolongation lift} (or simply prolongation) of an infinitesimal symmetry $X$ of a distribution. We claim that given ${\mathcal D}$ and its infinitesimal symmetry $X$ on $M$ we can lift it to a vector field $\tilde{X}$ on the twistor bundle $\widetilde{M}$. This is done point by point as follows:

        Suppose that we want to lift $X_p$, i.e. the vector defined by an infinitesimal symmetry $X$ at $p\in M$, from point $p$ to a point $(p,\ell)$ in the fiber in $\widetilde{M}$ over $p$. At $p$ the point $(p,\ell)$ defines a direction $\ell$ in the 3-distribution $[{\mathcal D},{\mathcal D}]$. We transport this direction by a flow $\phi(t)$ of $X$ along its integral curve $p(t)$ passing through $p$, $p(0)=p$. This defines a direction $\ell(t)=\phi_*(t)\ell$ at every point of the curve $p(t)$. Thus starting with $\ell(0)=\ell$ at $p(0)=p$, we have a direction $\ell(t)$ at $p(t)$ for every $t$. Since $X$ is a symmetry of a $(2,3,5)$ distribution, its flow preserves the 3-distribution, so for any value of $t$ the direction $\ell(t)$ sits in the 3-distribution. Thus, choosing a point $\ell$ at a fiber of $p$, at each point $p(t)$ of an integral curve of a symmetry vector field $X$ we have a direction $\ell(t)$ in the 3-distribution. We thus have a curve $(p(t),\ell(t))$ in the bundle $\widetilde{M}$, which starts at $(p,\ell)$ and which projects to $p(t)$. The tangent vector $\tilde{X}_{(p,\ell)}$ to this curve at $t=0$, is the \emph{prolongation lift} of the symmetry vector $X_p$ from $p\in M$ to $(p,\ell)\in \widetilde{M}$. By repeating this procedure for all pairs $(p,\ell)\in\widetilde{M}$ we define a vector field $\tilde{X}$ on $\widetilde{M}$ consisting of vectors $\tilde{X}_{(p,\ell)}$. We call $\tilde{X}$ the prolongation of $X$. It follows from the construction that the prolongation $\tilde{X}$ of an infinitesimal symmetry $X$ of a $(2,3,5)$ distribution $\mathcal D$ is an infinitesimal symmetry of the corresponding Lie contact structure $([\lambda],[\Upsilon])$ on $\widetilde{M}$.

        Finishing the remark we stress that \emph{all} infinitesimal symmetries of the Lie contact structures $([\lambda],[\Upsilon])$ on $\widetilde{M}=\mathbb{P}([{\mathcal D}_k,{\mathcal D}_k])\setminus\mathbb{P}({\mathcal D}_k)$ with all $k\notin\{2,\tfrac23,\tfrac13,0,1,-1\}$ are just prolongations of infinitesimal symmetries of the distribution ${\mathcal D}_k$. We have proven this by explicitly solving the symmetry equations and finding all their solutions.
        %{\color{red} Shall I give explicit example of a symmetry $X$ of $\mathcal{D}$ in which I explicitely calculate the coefficients $(a,b)$?}
        
        \end{remark}

\subsubsection{A $(3,5,7)$ distribution}
It is also interesting to look at the infinitesimal symmetries of the prolongation 
$$\mathcal{S}= \mathrm{Span} \Big(\, \partial_x +p\,\partial_y+q\, \partial_p+ h \, \partial_z+ \frac{v}{h''}\, \partial_q+ \frac{w}{h''}(\partial_p+q\,\partial_z),\, \partial_v,\, \partial_w \,\Big)$$  of $\mathcal{D}_h$.
For \eqref{h_example} and  $k\notin\{2,\tfrac{2}{3},\tfrac{1}{3},-1\}$, the $7$ lifts of infinitesimal symmetries of the distribution $\mathcal{D}_k$ from Proposition \ref{prop_symmetries}
clearly preserve the $(3,5,7)$ distribution $\mathcal{S}$. We calculated that  all infinitesimal symmetries of ${\mathcal S}={\mathcal S}_k$ are contained in the span of these $7$ symmetries. We further calculated that in the flat case  $k=2$, the symmetry algebra of the distribution $\mathcal{S}_{2}$ is precisely $\mathfrak{g}_2$  (the symmetry algebra of the Lie contact structure is of course $\mathfrak{so}(3,4)$ in this case; see the end of this chapter for details).
It turns out that for any $(2,3,5)$ distribution, the prolongation $\mathcal{S}$ has the same symmetry algebra as the underlying $(2,3,5)$-distribution $\mathcal{D}$:

\begin{proposition}
For any $(2,3,5)$ distribution $\mathcal{D}\subset T M$, the infinitesimal symmetries of the prolongation $\mathcal{S}\subset T\widetilde{M}$ are precisely the lifts of the infinitesimal symmetries of $\mathcal{D}$.
\end{proposition}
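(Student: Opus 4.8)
The plan is to prove the two inclusions separately, relying on the structural facts established in Section~\ref{underlying_Liecont}: the prolongation $\mathcal{S}$ has growth vector $(3,5,7)$, with first derived distribution $[\mathcal{S},\mathcal{S}]=\widetilde{[\mathcal{D},\mathcal{D}]}$ and $[\mathcal{S},[\mathcal{S},\mathcal{S}]]=T\widetilde{M}$, and the bracket relations $[X_5,X_6]=0$, $[X_0,X_5]=-4X_3$, $[X_0,X_6]=4X_4$, $[X_0,X_3]=-X_1$, $[X_0,X_4]=-X_2$ hold modulo vertical terms. The inclusion ``$\supseteq$'' is immediate: if $X$ is an infinitesimal symmetry of $\mathcal{D}$ with local flow $\bar{\phi}_t$, then by construction its prolongation $\tilde{X}$ has flow $\Phi_t(x,\ell)=(\bar{\phi}_t(x),(\bar{\phi}_t)_*\ell)$, and from $\pi\circ\Phi_t=\bar{\phi}_t\circ\pi$ one gets $\pi_*\circ(\Phi_t)_*=(\bar{\phi}_t)_*\circ\pi_*$, so $(\Phi_t)_*$ carries $\{\xi:\pi_*\xi\in\ell\}$ to $\{\xi:\pi_*\xi\in(\bar{\phi}_t)_*\ell\}$; thus $\Phi_t$ preserves the tautological configuration defining $\mathcal{S}$, and $\tilde{X}$ is a symmetry of $\mathcal{S}$.

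For ``$\subseteq$'', let $X$ be an infinitesimal symmetry of $\mathcal{S}$. The key step is to recover the vertical bundle $\mathcal{V}=\ker\pi_*$ intrinsically from $\mathcal{S}$, namely as $\mathcal{V}=\mathrm{Ch}([\mathcal{S},\mathcal{S}])$, the Cauchy characteristic space of the rank~$5$ distribution $[\mathcal{S},\mathcal{S}]=\widetilde{[\mathcal{D},\mathcal{D}]}$. The inclusion $\mathcal{V}\subseteq\mathrm{Ch}([\mathcal{S},\mathcal{S}])$ follows from the fact that $\widetilde{[\mathcal{D},\mathcal{D}]}$ is the $\pi$-pullback of a distribution on $M$: in a local trivialisation, a section of $\widetilde{[\mathcal{D},\mathcal{D}]}$ has horizontal part taking values in the fixed subspace $[\mathcal{D},\mathcal{D}]_x$ for each $x$, hence so does its derivative along the fibre, which is exactly the horizontal part of its bracket with a vertical field. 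Equality then follows by a dimension count: a strict inclusion would exhibit $[\mathcal{S},\mathcal{S}]$, after reduction along its (integrable) Cauchy characteristics, as a bracket-generating distribution of rank $\leq 2$ and corank $\geq 2$, which is impossible. Since $X$ preserves $\mathcal{S}$ it preserves $[\mathcal{S},\mathcal{S}]$ and hence $\mathcal{V}$, so $X$ is $\pi$-related to a vector field $\bar{X}$ on the local leaf space~$M$.

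It remains to see that $\bar{X}$ is a symmetry of $\mathcal{D}$ and that $X$ is its prolongation. Since $X$ preserves $\widetilde{[\mathcal{D},\mathcal{D}]}$ and is $\pi$-related to $\bar{X}$, the field $\bar{X}$ preserves $[\mathcal{D},\mathcal{D}]$; it then automatically preserves $\mathcal{D}$, because $\mathcal{D}$ is recovered canonically from $[\mathcal{D},\mathcal{D}]$ as the rank~$2$ subbundle determined by the decomposable line $\ker(\mathcal{L}\colon\Lambda^2[\mathcal{D},\mathcal{D}]\to TM/[\mathcal{D},\mathcal{D}])$ --- in a local frame $\xi_1,\xi_2$ of $\mathcal{D}$ this kernel is spanned by $\xi_1\wedge\xi_2$, whose associated $2$-plane is $\mathcal{D}$. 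Thus $\bar{X}$ is an infinitesimal symmetry of $\mathcal{D}$, its prolongation $\tilde{\bar{X}}$ is defined and is $\pi$-related to $\bar{X}$, so $W:=X-\tilde{\bar{X}}$ is a vertical field, i.e.\ a section of $\mathcal{V}\subseteq\mathcal{S}$, and it preserves $\mathcal{S}$ (being the difference of two symmetries of $\mathcal{S}$); hence $W\in\Gamma(\mathrm{Ch}(\mathcal{S}))$. But $\mathrm{Ch}(\mathcal{S})=0$: either by the same dimension count (a rank-$1$ Cauchy characteristic would make $\mathcal{S}$ descend to a rank-$2$ distribution on a $6$-manifold with derived ranks $2,4,6$, which violates $\binom{2}{2}<2$), or directly from Theorem~\ref{g2_gs}, where $[X_0,X_5]=-4X_3$ and $[X_0,X_6]=4X_4$ modulo vertical terms show that no nonzero element of $\mathcal{S}$ lies in its own Cauchy characteristic. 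Therefore $W=0$ and $X=\tilde{\bar{X}}$.

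The step I expect to be the main obstacle is the intrinsic identification $\mathcal{V}=\mathrm{Ch}([\mathcal{S},\mathcal{S}])$ --- in particular the equality, not merely the inclusion --- together with making the Cauchy-characteristic reductions precise: integrability of $\mathrm{Ch}$, its compatibility with passing to derived distributions, and the attendant rank bookkeeping. Once $\mathcal{V}$ is reconstructed from $\mathcal{S}$ alone, the remaining steps are formal.
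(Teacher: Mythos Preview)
Your proof is correct and follows the same overall architecture as the paper's: recover $\mathcal{V}$ intrinsically from $\mathcal{S}$, deduce that every symmetry of $\mathcal{S}$ is projectable, then recover $\mathcal{D}$ intrinsically from $[\mathcal{D},\mathcal{D}]$ via the kernel of its Levi bracket. The two proofs differ only in the first step. The paper characterises $\mathcal{V}$ as the rank-$2$ subbundle of $\mathcal{S}$ determined by the (necessarily decomposable) one-dimensional kernel of the Levi bracket $\Lambda^2\mathcal{S}\to[\mathcal{S},\mathcal{S}]/\mathcal{S}$; this is a one-line linear-algebra observation, since the map is surjective from a three-dimensional to a two-dimensional space. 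You instead characterise $\mathcal{V}$ as $\mathrm{Ch}([\mathcal{S},\mathcal{S}])$, which is equally valid but requires the reduction-by-Cauchy-characteristics argument you sketch. The paper's route is somewhat more economical here, while yours has the advantage of invoking a standard normal form for distributions.

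One point worth noting: your proof is actually more complete than the paper's. The paper stops once it has shown that a symmetry of $\mathcal{S}$ projects to a symmetry of $\mathcal{D}$; it does not explicitly argue that the original symmetry coincides with the prolongation lift of its projection. You close this gap with the observation that the difference $W=X-\tilde{\bar X}$ is a vertical section of $\mathcal{S}$ preserving $\mathcal{S}$, hence lies in $\mathrm{Ch}(\mathcal{S})=0$. This last step is genuinely needed for the statement as formulated, and your argument for $\mathrm{Ch}(\mathcal{S})=0$ (either via the growth obstruction or directly from the bracket relations) is fine.
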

\begin{proof}  
Since the construction is natural, every infinitesimal  symmetry $X\in\mathfrak{X}(M)$ of the $(2,3,5)$-distribution $\mathcal{D}$ lifts to a vector field $\widetilde{X}\in\mathfrak{X}(\widetilde{M})$ that preserves the induced geometric structure on the twistor bundle. In particular, it defines a symmetry of the $(3,5,7)$ distribution $\mathcal{S}$.
It remains to show that every infinitesimal  symmetry of $\mathcal{S}$ projects to an infinitesimal symmetry of $\mathcal{D}$.
Consider the tensorial map $\Lambda^2\mathcal{S}\to [\mathcal{S},\mathcal{S}]/\mathcal{S}$ induced by the Lie bracket. At every point  this is a surjective map from a  $3$-dimensional to a $2$-dimensional space and thus it has a $1$-dimensional kernel spanned by a decomposable element. 
%The corresponding distribution is the unique rank $2$ distribution $\mathcal{D}'$ such that $[\mathcal{D}',\mathcal{D}']=\tilde{\mathcal{S}}$ and hence it is precisely the vertical bundle $\tilde{\mathcal{V}$.
So this defines a rank $2$ distribution on $\widetilde{M}$. Since the vertical distribution $\mathcal{V}$ for $\widetilde{M}\to M$ is evidently contained in this rank $2$-distribution  and of the same dimension, the two coincide. Note that this means that the vertical bundle is characterised as the unique rank $2$ subbundle in $\mathcal{S}$ such that Lie brackets of its sections are again contained in $\mathcal{S}$. This  in particular implies that any infinitesimal  symmetry $\widetilde{X}\in\mathfrak{X}(\widetilde{M})$ of $\mathcal{S}$ also preserves the vertical bundle  $\mathcal{V}$ and thus it is projectable to a vector field $X\in\mathfrak{X}(M)$. Moreover, $\widetilde{\xi}$ also preserves $ [\mathcal{S},\mathcal{S}]$, and since 
$ [\mathcal{S},\mathcal{S}]=\widetilde{[\mathcal{D},\mathcal{D}]}$, then naturality of the Lie bracket implies that $X$ preserves $[\mathcal{D},\mathcal{D}]$. By the same line of argument as above, $\mathcal{D}$ can be characterised as the unique rank $2$ subbundle in  $[\mathcal{D},\mathcal{D}]$ such that Lie brackets of its sections are again contained in  $[\mathcal{D},\mathcal{D}]$, and this implies that $\xi$ is an infinitesimal symmetry for the $(2,3,5)$-distribution $\mathcal{D}$.
\end{proof}

\subsubsection{Flat Lie contact structure} We conclude this chapter with the flat case, corresponding to the $(2,3,5)$ distribution with $h(q)=\tfrac12 q^2$, i.e. $k=2$. In this case we have
$$\lambda=\der z- ( w+q)\der p +v \der y + (w q -v p +\frac{q^2}{2})\der x$$
and the conformal tensor $\Upsilon$ on $\mathrm{ker}(\lambda)$ can be represented by
\begin{align*}
\Upsilon=& -3v^2(2vp-2wq-q^2)\der x^4+ 6 v^3 \der x^3 \der y- 6 v^2(w+q)\der p\der x^3 +\\
&6 v  (3vp-2wq-q^2) \der q\der x^3 -2(-9 w^2q-12wq^2+9vpw+9pvq-4q^3)\der v\der x^3 +\\
&6 v (-q^2+3vp-3wq)\der w\der x^3+ 3 v^2\der p^2 \der x^2 -3(-2wq-q^2+6vp)\der q^2\der x^2-\\
&9p^2\der v^2\der x^2 - 324 q^2  \der w^2 \der x^2 + 9 v  (w+q) \der p\der q \der x^2+\\
&6  (-3w^2 -8wq -4 q^2 +3vp) \der p \der v \der x^2 +6 v  (3 w +2 q) \der p \der w \der x^2+\\
&18 p  (w+q)\der q\der v \der x^2  - 6  (-3 w q+6 v p-q^2)\der q\der w\der x^2 - 18 v^2 \der q\der y\der x^2+ \\
&18  pq \der v\der w\der x^2+
18 v  (w+q) \der v \der y \der x^2- 18 v^2 \der w \der y \der x^2- 6 v \der p^2 \der q \der x+\\
&24  (w+q) \der p^2\der v \der x    -6v  \der p^2 \der w \der x   -6  (w+q)  \der p \der q ^2 \der x -\\
& 18  p \der p\der q \der v \der x- 6  (3 w +2 q) \der p \der q \der w \der x -18  p \der p \der v \der w \der x -\\
&18 v  \der p \der v \der y \der x  + 18 q  \der p \der w^2 \der x + 6 p  \der q^3 \der x + 18 p  \der q^2 \der w \der x+\\
&  18 v \der q^2 \der x \der y - 18  (w+q) \der q\der v \der y \der x+ 36 v  \der q \der w \der x \der y  +
 18 p  \der v^2 \der x \der y - \\&  18 q  \der v \der w \der x \der y- 8  \der p^3\der v + 3  \der p^2 \der q^2  + 
6  \der p^2 \der q \der w - 9  \der p^2 \der w^2+ \\& 18  \der p \der q \der v \der y + 18  \der p \der v \der w \der y -
 6  \der q^3 \der y -18  \der q^2 \der w \der y -9  \der v^2 \der y^2.\\
\end{align*}
The infinitesimal symmetries of the Lie contact structure $([\lambda],[\Upsilon])$ form a Lie algebra $\soa(3,4)$ and are naturally grouped as  $(\tilde{X}_1,\dots,\tilde{X}_7)$,  $(\tilde{Y}_1,\dots,\tilde{Y}_7)$ and  $(\hat{Z}_1,\dots,\hat{Z}_7)$, where we have: 
\begin{itemize}
\item 
The first 7 symmetries are just prolongations $(\tilde{X}_1,\dots,\tilde{X}_7)$ of the 7 symmetries $(X_1,\dots,X_7)$ of the distribution ${\mathcal D}_k$, as given in Proposition \ref{prop_symmetries}, and restricted to the case $k=2$: 
 \begin{align*}\begin{aligned}
          \tilde{X}_1=&\partial_x,\quad\tilde{X}_2=\partial_y,\quad\tilde{X}_3=\partial_z,\quad\tilde{X}_4=\partial_p+x\partial_y,\\
          \tilde{X}_5=&x\partial_x-p\partial_p-2q\partial_q-3z\partial_z-3v\partial_v-2w\partial_w,\\
          \tilde{X}_6=&y\partial_y+p\partial_p+q\partial_q+2z\partial_z+v\partial_v+w\partial_w,\\
          \tilde{X}_7=&q\partial_x+\big(pq-z\big)\partial_y+\tfrac12q^2\partial_p+\tfrac16q^3\partial_z-v^2\partial_v-vw\partial_w.
 \end{aligned}\end{align*}
\end{itemize}
\begin{itemize}
\item 
The second group of symmetries are the lifts of the 7 symmetries $(Y_1,\dots,Y_7)$ of the flat distribution ${\mathcal D}_2$ given in (\ref{next7}).
  \begin{align*}\begin{aligned}
        \tilde{Y}_1=&\tfrac12 x^2\partial_y+x\partial_p+\partial_q+p\partial_z,\\
        \tilde{Y}_2=&\tfrac16 x^3\partial_y+\tfrac12x^2\partial_p+x\partial_q+(xp-y)\partial_z+\partial_v,\\
        \tilde{Y}_3=& x^2\partial_x+3xy\partial_y+(3y+xp)\partial_p+(4p-qx)\partial_q+2p^2\partial_z-(3vx-3w-3q)\partial_v-wx\partial_w,\\
        \tilde{Y}_4=& (8p-6qx)\partial_x+(4p^2+6xz-6pqx)\partial_y+(6z-3q^2x)\partial_p-2q^2\partial_q-q^3x\partial_z+\\
        &(6v^2x-6vw-6vq)\partial_v+(6vwx-6w^2-4wq)\partial_w,\\
        \tilde{Y}_5=& (16xp-12y-6qx^2)\partial_x+(6x^2z+8p^2x-6pqx^2)\partial_y+(12xz+4p^2-3q^2x^2)\partial_p+\\&(12z+4pq-4q^2x)\partial_q+(12pz-q^3x^2)\partial_z+\\&(6v^2x^2-12vwx-12vqx+12wq+6q^2)\partial_v+
        (6vwx^2-12w^2x-8wqx+4wp)\partial_w,\\
        \tilde{Y}_6=& (24px^2-6qx^3-36xy)\partial_x+(12p^2x^2+6x^3z-36y^2-6pqx^3)\partial_y+\\&(12p^2x+18x^2z-3q^2x^3-36py)\partial_p+(12pqx-6q^2x^2-24p^2+36xz)\partial_q+\\&(36pxz-8p^3-q^3x^3-36yz)\partial_z+\\&(6v^2x^3-18vw x^2-18vqx^2+36wqx+18q^2x+36vy-36wp-36pq+36z)\partial_v+\\
        &(6vwx^3-18w^2x^2-12wqx^2+12wpx)\partial_w,\\
      %  \end{aligned}\end{align*}
 % \begin{align*}\begin{aligned}
        \tilde{Y}_7=& (12p^2-18qy)\partial_x+(8p^3-18pqy+18yz)\partial_y+(18pz-9q^2y)\partial_p+(18qz-6pq^2)\partial_q+\\&(18z^2-3q^3y)\partial_z+(18v^2y-18vwp-18vpq+9wq^2+3q^3+18vz)\partial_w+\\
        &(18vwy-18w^2p-12wpq+18wz)\partial_w.
\end{aligned}\end{align*}
  %As the lifts of $(X_1,\dots,X_7)$ with $k=2$ and $(Y_1,\dots,Y_7)$, t
  The 14 symmetries $(\tilde{X}_1,\dots,\tilde{X}_7,\tilde{Y}_1,\dots,\tilde{Y}_7)$ form a Lie algebra isomorphic to the split real form of the exceptional Lie algebra $\mathfrak{g}_2$.
\end{itemize}
\begin{itemize}
  \item The third group of 7 symmetries is given by 
 $$\begin{aligned}
        \hat{Z}_1=&\tfrac{1}{w}\Big(\partial_x+(p-wx)\partial_y+q\partial_p+v\partial_q+(qw+\tfrac12q^2)\partial_z\Big),\\
        \hat{Z}_2=&\tfrac{1}{w}\Big(x\partial_x+(px-\tfrac12wx^2)\partial_y+qx\partial_p+vx\partial_q+(wqx-wp+\tfrac12q^2x)\partial_z\Big)-\partial_w,\\
        \hat{Z}_3=&\tfrac{1}{w}\Big(q\partial_x+(pq-pw)\partial_y+q^2\partial_p+vq\partial_q+\tfrac12(q^2w+q^3)\partial_z\Big)-v\partial_w,\\
       \hat{Z}_4=&\tfrac{1}{w}\Big((2p-qx)\partial_x+(2p^2-pqx-3wy+wpx)\partial_y+q(2p-qx)\partial_p+v(2p-qx)\partial_q+\\&(2pq^2-q^3x-6zw+4pqw-q^2xw)\partial_z\Big)+(vx-3w-q)\partial_w,\\
    %   \end{aligned}$$
  %  $$\begin{aligned}
        \hat{Z}_5=&\tfrac{1}{w}\Big((4px-qx^2-6y)\partial_x+(wpx^2-pqx^2-3wxy+4p^2x-6py)\partial_y+\\&(3wpx-q^2x^2+4pqx-9wy-6qy)\partial_p+(4vpx-vqx^2+3wqx-6vy-6wp)\partial_q+\\&\tfrac12(8wpqx-q^3x^2-wq^2x^2+4pq^2x-4wp^2-12wqy-6q^2y)\partial_z\Big)+\\&(3vx-9w-3q)\partial_v+(vx^2-3wx-2qx+2p)\partial_w,\\
        \end{aligned}$$
    $$\begin{aligned}
        \hat{Z}_6=&\tfrac{1}{w}\Big((4pq-3wqx-12z)\partial_x+(3wxz-3wpqx-2wp^2+4p^2q-12pz)\partial_y+\\&\tfrac12(8pq^2-3wq^2x-18wz-24qz)\partial_p+(4vpq-3wq^2-12vz)\partial_q+\\&\tfrac12q(4wpq-wq^2x+4pq^2-24wz-12qz)\partial_z\Big)+(3v^2x+9vw-3vq)\partial_v+\\&(3vwx-4vp+9w^2+6wq+2q^2)\partial_w,\\
        \hat{Z}_7=&\tfrac{1}{w}\Big((3wqx^2-8pqx-18wy-16p^2+24qy+24xz)\partial_x+\\&(3wpqx^2+4wp^2x-3wx^2z-8p^2qx-24wpy-16p^3+24pqy+24pxz)\partial_y+\\&\tfrac12(3wq^2x^2-16pq^2x-36wp^2+36wxz-32p^2q+48q^2y+48qxz)\partial_p+\\&2(3wq^2x-4vpqx-8vp^2+12vqy+12vxz-9wpq+9wz)\partial_q+\\&\tfrac12(wq^3x^2-8wpq^2x-8pq^3x-32wp^2q+24wq^2y+48wqxz-16p^2q^2+\\&24q^3y+24q^2xz-12wpz)\partial_z\Big)+(6vqx-3v^2x^2-18vwx+18wq-3q^2)\partial_v+\\&(8vpx-3vwx^2-18w^2x-12wqx-4q^2x-24vy+30wp+16pq-24z)\partial_w.
  \end{aligned}$$
These symmetries are \emph{not} lifts of vector fields from $M$. In particular they are not lifts of conformal symmetries of the conformal class $[g_{{\mathcal D}_2}]$ of the distribution. In other words the algebra $\soa(3,4)$ of symmetries of the Lie conatct structure $([\lambda],[\Upsilon])$ on $\widetilde{M}=\mathbb{P}([{\mathcal D}_2,{\mathcal D}_2])\setminus\mathbb{P}({\mathcal D}_2)$ is \emph{not} a lift of the symmetry algebra $\soa(3,4)$ of the flat conformal structure $[g_{{\mathcal D}_2}]$ associated with the distribution ${\mathcal D}_2$.    
\end{itemize}
\subsubsection{Geometry on the boundary $\mathbb{P}(\mathcal{D}_2)$ of $\mathbb{P}([{\mathcal D}_2,{\mathcal D}_2])$}
Next we observe what happens if we pass to the $6$-dimensional boundary $\mathbb{P}(\mathcal{D}_2)$, which in our parametrization is given by  $w=0$. This is done by considering an inclusion \begin{align*} \iota:\mathbb{P}(\mathcal{D}_2)\hookrightarrow \mathbb{P}([\mathcal{D}_2,\mathcal{D}_2]),\quad \iota(x,y,p,q,z,v)=(x,y,p,q,z,v,0),\end{align*}  of the boundary $\mathbb{P}(\mathcal{D}_2)$ into $\mathbb{P}([\mathcal{D}_2,\mathcal{D}_2])$ and by pullbacking the structural objects $\lambda$ and $\Upsilon$ to the boundary. Taking $\lambda$ as in (\ref{cfk}) with $k=2$ gives:
$$\lambda_0=\iota^*\lambda=\der z-q\der p+v\der y+(\tfrac12q^2-vp)\der x.$$
This defines a 5-distribution ${\mathcal H}_0$ on $\mathbb{P}(\mathcal{D}_2)$ via
${\mathcal H}_0=\mathrm{ker}(\lambda_0)$.

Let us recall the following definition: Given a contact distribution ${\mathcal D}=\mathrm{ker}(\lambda)$ defined in terms of a 1-form $\lambda$ on a manifold $M$, a nonzero vector field $X$ on $M$ is called its \emph{Cauchy characteristic} if $X\hook\lambda=0$ and $X\hook\der\lambda=0\,\mathrm{mod}\, \lambda$. A Cauchy characteristic is a particular infinitesimal symmetry of $\mathcal D$, since the definition implies ${\mathcal L}_X\lambda\dz\lambda=0$. It follows that, in general, distributions have no Cauchy characteristics. However, it turns out that the distribution ${\mathcal H}_0$ on $\mathbb{P}(\mathcal{D}_2)$, has a Cauchy characteristic 
$$X=\partial_x +p\,\partial_y+q\, \partial_p+ \frac{q^2}{2}\partial_z+v\partial_q.$$
Interestingly this characteristic preserves $\Upsilon_0$ also, and we have ${\mathcal L}_{fX}\Upsilon_0=0$. To explicitly see this we adapt coordinates in such a way that five of them are invariant with respect to $X$ and the sixth one is choosen so that it ramifies $X$. Explicitly we pass from coordinates $(x,y,p,q,z,v)$ to coordinates $(x_0,x_1,x_2,x_3,x_4,x_5)$, where
\begin{align*}
&x=x_5\,, y=\tfrac{1}{6}x_1{x_5}^3+\tfrac{6^{1/3}}{2}x_2{x_5}^2+\tfrac{6^{2/3}}{2}x_3x_5+x_4,\, p=\tfrac{1}{2} x_1{x_5}^2+{\scriptstyle 6}^{1/3}x_2 x_5 +\tfrac{6^{2/3}}{2}x_3, \\& q=x_1 x_5+{\scriptstyle 6}^{1/3}x_2,\,z= \tfrac{6^{2/3}}{2} {x_2}^2x_5+\tfrac{6^{1/3}}{2} x_1 x_2{x_5}^2+\tfrac16{x_1}^2{x_5}^3+ x_0, \,v=x_1.
\end{align*}
In these new coordinates
\begin{align*}
X=\partial_{x_5},\quad\quad\quad\lambda_0=\der x_0-3x_2\der x_3+ x_1 \der x_4 ,
\end{align*}
and the pullback of the conformal symmetric rank 4 tensor is represented by
\begin{align*}
\Upsilon_0= -3 \der x_2^2 \der x_3^2 + 4 \der x_1 \der x_3^3 +4 \der x_2^3 \der x_4-6 \der x_1 \der x_2\der x_3\der x_4 + \der x_1^2 \der x_4^2 .
\end{align*}
This suggests to consider the five-dimensional quotient $N=\mathbb{P}(\mathcal{D}_2)/X$ of  $\mathbb{P}(\mathcal{D}_2)$ by the foliation given by $X$.
\subsubsection{Associated flat contact $G_2$ geometry in dimension 5}\label{associated_G_2cont}
The above formulae show that $\lambda_0$ and $\Upsilon_0$ descend to $N$. Moreover, we have \begin{align*}
\der\lambda_0\wedge\der\lambda_0\wedge\lambda_0=2 \der x_1\wedge\der x_2\wedge\der x_3\wedge\der x_4\wedge\der x_0,
\end{align*} so $\lambda_0$ defines a contact distribution $\mathcal{H}_0=\mathrm{ker} \lambda_0$ on $N$.  We equip this contact distribution with the line $[\Upsilon_0]$ of symmetric rank 4 tensors on $\mathcal{H}_0$ spanned by $\Upsilon_0$. Then one finds that the pointwise common stabilizer of $[\Upsilon_0]$ and $[(\der {\lambda_0})_{|\mathcal{H}_0}]$ is  isomorphic to $\mathrm{GL}(2,\mathbb{R})$ in the irreducible $4$-dimensional representation. That means that $([\lambda_0],[\Upsilon_0])$ describes a $G_2$-contact structure on $N$ as introduced in section \ref{G_2_contact}. 

The algebra of infinitesimal symmetries of the structure $([\lambda_0],[\Upsilon_0])$ is then defined as the set  of vector fields $X\in\mathfrak{X}(N)$ such that
\begin{align*}
(\mathcal{L}_X\lambda_0)\wedge\lambda_0=0,\quad \mathrm{and}\quad\mathcal{L}_X\Upsilon_0=f \Upsilon_0 +\lambda_0\odot\tau,
\end{align*}
 where $\tau$ is a rank $3$ tensor and $f$ is a function on $M$.
 We  calculated that the algebra of infinitesimal symmetries of $([\lambda_0],[\Upsilon_0])$ is the exceptional Lie algebra $\mathfrak{g}_2$, as described in the following proposition.

 \begin{proposition}\label{pr1}
All symmetries $X$ of the structure $([\lambda_0],[\Upsilon_0])$ defined by the representatives:
\be
\lambda_0=\der x_0-3x_2\der x_3+x_1\der x_4\label{la1}\ee
and
\be\Upsilon_0=-3\der x_2^2\der x_3^2+4\der x_1\der x_3^3+4\der x_2^3\der x_4-6\der x_1\der x_2\der x_3\der x_4+\der x_1^2\der x_4^2\label{up1}\ee
are $\bbR$-linear combinations of the following 14 vector fields:
$$
\begin{aligned}
X_1&=(x_0^2+3x_3^3x_1-3x_3x_1x_4x_2-x_4x_2^3-3x_3^2x_2^2)\partial_0+(x_1^2x_4+x_0x_1+x_2^3)\partial_1+\\&(2x_3x_2^2+x_2x_1x_4+x_0x_2-x_1x_3^2)\partial_2+(x_0x_3+x_2x_3^2+x_4x_2^2)\partial_3+\\&(x_0x_4+3x_3x_2x_4-x_3^3)\partial_4,\\
X_2&=-(x_0x_4-2x_3^3)\partial_0+(x_1x_4+x_0)\partial_1-x_3^2\partial_2-x_3x_4\partial_3-x_4^2\partial_4,\\
X_3&=-(\tfrac 12x_3x_1x_4+\tfrac12 x_4x_2^2+x_2x_3^2)\partial_0+\tfrac12 x_2^2\partial_1+(\tfrac23 x_3x_2+\tfrac16 x_1x_4+\tfrac16 x_0)\partial_2+\\&(\tfrac16 x_3^2+\tfrac13 x_2x_4)\partial_3+\tfrac12 x_3x_4\partial_4,\\
X_4&=-(x_2x_4+x_3^2)\partial_0+x_2\partial_1+\tfrac23 x_3\partial_2+\tfrac13 x_4\partial_3,\\
X_5&=-x_4\partial_0+\partial_1,\\
X_6&=(x_0 x_2-2 x_1 x_3^2)\partial_0+x_1 x_2 \partial_1+(\tfrac13 x_2^2+\tfrac23 x_3 x_1) \partial_2+(\tfrac13 x_3 x_2-\tfrac13 x_0) \partial_3+x_3^2 \partial_4,\\
X_7&=x_0\partial_0+x_1\partial_1+\tfrac23 x_2\partial_2+\tfrac13 x_3\partial_3,\\
X_8&=-(\tfrac32 x_3x_1x_2+\tfrac12 x_2^3)\partial_0+\tfrac12 x_1^2\partial_1+\tfrac12 x_1x_2\partial_2+\tfrac12 x_2^2\partial_3+(\tfrac32 x_3x_2+(\tfrac12 x_0)\partial_4\\
X_9&=-(x_3x_1+x_2^2)\partial_0+\tfrac13 x_1\partial_2+\tfrac23 x_2\partial_3+x_3\partial_4,\\
X_{10}&=\partial_4,\\
X_{11}&=x_0\partial_0+\tfrac13 x_2\partial_2+\tfrac23 x_3\partial_3+x_4\partial_4,\\
X_{12}&=-3x_2\partial_0+\partial_3\\
X_{13}&=\partial_2\\
X_{14}&=\partial_0.
\end{aligned}
$$
Here the symbols $\partial_\mu$ denote the partial derivatives with respect to the variables $x_\mu$: $\partial_\mu=\tfrac{\partial}{\partial x_\mu}$, $\mu=0,1,2,3,4$. 
The Lie algebra generated by the 14 vector fields $X_A$, $A=1,2,\dots, 14$ is isomorphic to the split real form of the exceptional simple Lie algebra ${\mathfrak g}_2$, and thus the $G_2$ contact structure $([\lambda_0],[\Upsilon_0])$ is flat.
\end{proposition}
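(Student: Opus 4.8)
The plan is to deduce the statement from two elementary verifications together with the general bound on the dimension of automorphism algebras of parabolic geometries. By the equivalence of categories recalled in Section~\ref{G_2_contact} --- and since Section~\ref{associated_G_2cont} already exhibits $([\lambda_0],[\Upsilon_0])$ as a genuine $\mathrm{G}_2$-contact structure on $N$ via the stabiliser computation --- this structure corresponds to a regular normal parabolic geometry of type $(\mathrm{G}_2,\bar{P})$. For such a geometry the Lie algebra of infinitesimal automorphisms has dimension at most $\dim\g_2=14$, with equality if and only if the Cartan curvature vanishes, i.e.\ the geometry is locally isomorphic to the homogeneous model $\mathrm{G}_2/\bar{P}$, whose automorphism algebra is $\g_2$. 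Hence it suffices to exhibit fourteen $\bbR$-linearly independent infinitesimal symmetries: the symmetry algebra then has dimension exactly $14$, the structure is automatically flat, and the algebra is the split real form $\g_2$.

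First I would check that each $X_A$, $A=1,\dots,14$, satisfies the symmetry equations for the representatives \eqref{la1} and \eqref{up1}: that $\mathcal{L}_{X_A}\lambda_0=g_A\,\lambda_0$ for some function $g_A$ --- equivalently $(\mathcal{L}_{X_A}\lambda_0)\dz\lambda_0=0$, which also shows $X_A$ preserves the contact distribution $\mathcal{H}_0=\mathrm{ker}\,\lambda_0$ --- and that $\mathcal{L}_{X_A}\Upsilon_0=f_A\,\Upsilon_0+\lambda_0\odot\tau_A$ for some function $f_A$ and some symmetric rank $3$ tensor $\tau_A$; the latter condition is equivalent to saying that the pullback of $\mathcal{L}_{X_A}\Upsilon_0$ to $\mathcal{H}_0$ is a functional multiple of the pullback of $\Upsilon_0$. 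The translation-type fields $X_{14}=\partial_0$, $X_{13}=\partial_2$, $X_{10}=\partial_4$, together with $X_5,X_{12}$ and the grading fields $X_7,X_{11}$, are immediate; the remaining cases are elementary but voluminous and are best handled with computer algebra. Linear independence of $X_1,\dots,X_{14}$ is then read off from their values and first jets at a generic point of $N$, or from the triangular pattern of their leading coefficients in the coordinates $(x_0,\dots,x_4)$.

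To make the isomorphism with $\g_2$ explicit --- beyond the abstract argument above --- I would compute the brackets $[X_A,X_B]$ and verify that they close on $\mathrm{Span}(X_1,\dots,X_{14})$, and then identify the resulting $14$-dimensional Lie algebra through the $|2|$-graded contact grading attached to $\bar{P}$ in Section~\ref{parabolics}: the $\mathrm{ad}$-eigenspace decomposition with respect to a suitable grading element of the Levi subalgebra $\g_0\cong\mathfrak{gl}(2,\bbR)$ (spanned inside $\mathrm{Span}(X_1,\dots,X_{14})$ by $X_7$, $X_{11}$ and two further vector fields) should reproduce a grading $\g_{-2}\oplus\g_{-1}\oplus\g_0\oplus\g_1\oplus\g_2$ with graded dimensions $1,4,4,4,1$, in which $\g_0$ acts on $\g_{-1}\cong S^3\bbR^2$ and on $\g_1$ in the irreducible four-dimensional representation --- this pins the algebra down as the split real form $\g_2$ and matches the description of the $\mathrm{G}_2$-contact parabolic in Section~\ref{parabolics}. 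As a conceptual check on flatness one may also note that $([\lambda_0],[\Upsilon_0])$ was produced from the \emph{flat} distribution $\mathcal{D}_2$ (the case $k=2$), as the descent to the leaf space of the Cauchy characteristic of the structure induced on the boundary $\mathbb{P}(\mathcal{D}_2)$ of the twistor bundle $\mathbb{T}$, so that by naturality and homogeneity the induced $\mathrm{G}_2$-contact structure on $N$ is the homogeneous model; but this remark is not logically needed once the fourteen symmetries have been exhibited.

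The main obstacle is the completeness assertion, namely that there are \emph{no} symmetries outside $\mathrm{Span}(X_1,\dots,X_{14})$. The clean way around it is exactly the a priori dimension bound $\dim\mathrm{Aut}\leq\dim\g_2$ for parabolic geometries invoked in the first paragraph, which upgrades ``fourteen independent symmetries'' to ``precisely fourteen''. The more pedestrian route --- the one the explicit computation in fact follows --- is to solve the overdetermined linear system $(\mathcal{L}_X\lambda_0)\dz\lambda_0=0$ and $\mathcal{L}_X\Upsilon_0\equiv f\,\Upsilon_0\ (\mathrm{mod}\ \lambda_0)$ directly for a general vector field $X=\sum_{\mu=0}^{4}X^\mu\,\partial_\mu$ on $N$ by prolongation, which yields the fourteen generators and nothing more. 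The verifications in the first step and the bracket computations are each routine but lengthy, and are carried out with computer algebra.
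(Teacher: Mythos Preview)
Your proposal is correct. The paper itself offers no detailed proof of this proposition: it simply states that the symmetry algebra was \emph{calculated} (i.e., the overdetermined linear system $(\mathcal{L}_X\lambda_0)\dz\lambda_0=0$, $\mathcal{L}_X\Upsilon_0\equiv f\,\Upsilon_0\pmod{\lambda_0}$ was solved directly by computer algebra), yielding exactly the fourteen listed generators, and that the resulting Lie algebra is $\g_2$. This is precisely the ``pedestrian route'' you describe in your final paragraph.

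Your primary argument is genuinely different and more conceptual: rather than solving the symmetry equations in full, you invoke the a~priori bound $\dim(\text{infinitesimal automorphisms})\le\dim\g_2=14$ for regular normal parabolic geometries of type $(\mathrm{G}_2,\bar{P})$, with equality forcing flatness. Once the fourteen $X_A$ are verified to be symmetries and to be linearly independent, completeness, flatness, and the isomorphism type of the algebra all follow at once. What this buys you is that the hard direction --- \emph{no further symmetries exist} --- comes for free from general theory, so the computer-algebra burden reduces to checking that each $X_A$ satisfies the two symmetry conditions (and linear independence). What the paper's brute-force approach buys is self-containment: it does not need to appeal to the equivalence of categories in Section~\ref{G_2_contact} or to the general dimension bound, and it produces the generators rather than merely verifying them. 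Your explicit bracket computation and identification of the $|2|$-grading is a nice independent confirmation, but as you note it is logically redundant once the dimension bound has been invoked.
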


\section{$\mathrm{G}_2$-reduced Lie contact structures}\label{secReduction}
Here we  show  that the Lie contact structures  on $\mathbb{P}([{\mathcal D},{\mathcal D}])\setminus\mathbb{P}({\mathcal D})$ associated with $(2,3,5)$-distributions $\mathcal{D}$
 have holonomy  reduced to $\mathrm{G}_2\subset \mathrm{O}(3,4)$. We  further study, more generally, Lie contact structures in dimension $7$ whose holonomy is reduced to $\mathrm{G}_2$. In particular, we  prove Proposition \ref{prop-hol} and Theorem \ref{thm1.2}.

\subsection{Normality of the induced Cartan connection} We start this chapter with a technical result: we will prove that the induced Lie contact Cartan connection $\widetilde{\omega}$ satisfies the normality condition $\widetilde{\partial}^*\widetilde{\omega}=0$.
Note that we did not need this information to show that the twistor bundle of a $(2,3,5)$-distribution carries an induced Lie contact structure.  However, the fact that  $\widetilde{\omega}$  is the canonical normal Cartan connection 
 will be of importance for further applications, in particular  Proposition \ref{prop-hol}.
 
 Given Theorem \ref{g2_gs}, proving normality of $\widetilde{\omega}$  is a straightforward task, although computationally involved.
The following alternative proof uses methods from parabolic geometry,
in particular Kostant's theorem  \cite{kostant} and  Corollary 3.2 in \cite{cap-correspondence}, which we will use to derive information about the full curvature  of  regular, normal parabolic geometries associated with $(2,3,5)$-distributions from information about their harmonic curvature space. 

The Kostant codifferential can be written in terms of basis as follows: Let $X_1,\dots,X_n\in\g$ project to a basis for $\g/\p$ and let $Z_1,\dots,Z_n\in\mathfrak{p}_+\cong (\g/\mathfrak{p})^*$ the dual basis, then for any $\phi\in\Lambda^2(\g/\p)^*\otimes\g$ and $X\in\g$,
\begin{align*}
\partial^*\phi (X)=2\sum_{i}[\phi(X_i,X),Z_i]+\sum_{i}\phi(X_i,[Z_i,X]),
\end{align*}
see \cite[Lemma 3.1.11]{cap-slovak-book}.

\begin{lemma}\label{normal}
Suppose $(\mathcal{G}\to M,\omega)$ is a regular and normal parabolic geometry of type $(\mathrm{G}_2,P)$, then the induced parabolic geometry $(\widetilde{\mathcal{G}}\to \widetilde{M},\widetilde{\omega})$  of type $(\mathrm{O}(3,4),\widetilde{P})$ is normal.
\end{lemma}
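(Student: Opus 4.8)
The goal is to show that the Fefferman-type extension of the normal Cartan connection associated to a $(2,3,5)$-distribution is again normal, i.e. $\widetilde{\partial}^*\widetilde{K}=0$ where $\widetilde{K}$ is the curvature function of $(\widetilde{\mathcal{G}}\to\widetilde{M},\widetilde{\omega})$. The strategy is to avoid a direct computation with the full curvature and instead exploit what is already known: from Proposition~\ref{regular} the induced geometry is regular, and from \cite{nurowski-metric} (or Theorem~\ref{g2_gs}) the $(2,3,5)$ Cartan connection $\omega$ is torsion-free, with curvature function $K$ taking values in $\Lambda^2(\g/\p)^*\otimes\p$. Moreover, by Kostant's theorem the harmonic curvature of a $(2,3,5)$-distribution lives in a single irreducible $\g_0$-component, the Cartan quartic, sitting in $S^4\g_{-1}^*\otimes\g_{-3}$-type homogeneity; combined with regularity and the Bianchi identity this pins down the homogeneous components of the full curvature $K$ rather tightly. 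The relation $\widetilde{K}\circ j=(\Lambda^2\varphi\otimes i')\circ K$ from the Fefferman-type construction then expresses $\widetilde{K}$ (along the image of $j$, and hence everywhere by $\widetilde{P}$-equivariance) purely in terms of $K$.

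**Key steps.** First I would fix compatible bases: take $X_1,\dots,X_n\in\g$ projecting to a basis of $\g/\p$ and dual $Z_i\in\p_+$, and similarly $\widetilde{X}_a,\widetilde{Z}_a$ for $\widetilde{\g}$, chosen so that under $i':\g\hookrightarrow\widetilde{\g}$ and the projection $\widetilde{\g}/\widetilde{\p}\cong\g/\q\to\g/\p$ the two codifferential formulas are comparable term by term. Second, I would use Corollary~3.2 of \cite{cap-correspondence}: since $\widetilde{M}=\mathcal{G}/Q$ is (locally) the correspondence space of $M$, normality of $\widetilde{\omega}$ is governed by the behaviour of $\widetilde{\partial}^*$ on curvature pulled back from $M$, and the relevant obstruction is a specific $\widetilde{P}$-homomorphism applied to $K$. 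Third, I would verify that $\widetilde{\partial}^*(\Lambda^2\varphi\otimes i')(K)=0$ by a component count: the $\g_0$-type (equivalently $\widetilde{\g}_0$-restricted) decomposition of the space where $\widetilde{K}$ could a priori live, intersected with $\ker\widetilde{\partial}^*{}^{\perp}$, is shown to be disjoint from the one irreducible component (and its Bianchi-forced companions) that $K$ actually occupies. This is where Kostant's description of $H_*(\p_+,\g)$ versus $H_*(\widetilde{\p}_+,\widetilde{\g})$ does the real work: the harmonic curvature of a $\g_2$ $(2,3,5)$-structure is the Cartan quartic, and one checks that its image under the inclusion, together with all curvature components of the same or higher homogeneity that can be generated from it via the regularity constraint and the Bianchi identity, all lie in $\ker\widetilde{\partial}^*$.

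**Main obstacle.** The crux is controlling the \emph{non-harmonic} part of the full curvature $K$ and showing its image is still $\widetilde{\partial}^*$-closed. Regularity and torsion-freeness constrain $K$ to homogeneities $\geq 2$ with values in $\p$, but $K$ itself need not be harmonic; in principle $\widetilde{\partial}^*$ could detect a lower-homogeneity piece of $(\Lambda^2\varphi\otimes i')(K)$ even though $\partial^* K=0$, because $\varphi$ mixes the $\q$-grading relative to the $\p$-grading. The way around this is the observation, used already in the proof of Proposition~\ref{regular}, that $\p$ sits inside $\widetilde{\g}^{-1}$, so $\widetilde{K}$ has values in $\Lambda^2(\widetilde{\g}/\widetilde{\p})^*\otimes\widetilde{\g}^{-1}$; one then checks that on this restricted target the two codifferentials are genuinely intertwined by the Fefferman maps, so $\partial^* K=0$ forces $\widetilde{\partial}^*\widetilde{K}=0$ directly. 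Making this intertwining precise — that $\widetilde{\partial}^*\circ(\Lambda^2\varphi\otimes i')$ agrees with $(\varphi\otimes i')\circ\partial^*$ on the subspace $\Lambda^2(\g/\p)^*\otimes\p$ — is the one genuinely computational lemma, best done by plugging the basis formula for $\partial^*$ into both sides and using that the extra terms involve brackets $[\widetilde{Z}_a,\cdot]$ with generators $\widetilde{Z}_a$ not coming from $\p_+$, which annihilate $\p\subset\widetilde{\g}^{-1}$ for grading reasons.
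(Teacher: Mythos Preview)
Your overall strategy aligns with the paper's: reduce to the image of $j$ by $\widetilde{P}$-equivariance, compare the two codifferentials using compatible bases, and use Kostant's theorem together with Corollary~3.2 of \cite{cap-correspondence} to control the $P$-module in which the curvature $K$ actually lives. However, your proposed ``one genuinely computational lemma'' --- that $\widetilde{\partial}^*\circ(\Lambda^2\varphi\otimes i')$ and $(\varphi\otimes i')\circ\partial^*$ agree on $\Lambda^2(\g/\p)^*\otimes\p$ because the extra $\widetilde{Z}_a$ annihilate $\p$ for grading reasons --- is where the argument breaks down.

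The extra terms are not only the contributions from the two new dual generators $\widetilde{Z}_6,\widetilde{Z}_7$ corresponding to the $\g_1$-directions in $\g/\q$; those are harmless, since $\widetilde{\phi}$ vanishes upon insertion of anything in $\p$. The genuine obstruction comes from the fact that for $i=1,\dots,5$ the duals $\widetilde{Z}_i\in\tilde{\p}_+$ and $Z_i\in\p_+$ are \emph{different elements of $\tilde{\g}$}: one has $\widetilde{Z}_i-Z_i\in\g^{\perp}\cong\mathbb{V}$, specifically $\widetilde{Z}_i-Z_i\in\mathbb{V}_1$ for $i=1,2$ and $\widetilde{Z}_3-Z_3\in\mathbb{V}_2$. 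These do \emph{not} annihilate $\p$: for instance $[\mathbb{V}_1,\g_0]\subset\mathbb{V}_1$ is nonzero. So your intertwining identity is false on all of $\Lambda^2(\g/\p)^*\otimes\p$, and the difference
\[
\widetilde{\partial}^*\widetilde{\phi}(X)=2\sum_{i\le 5}[\widetilde{Z}_i-Z_i,\widetilde{\phi}(X,X_i)]-\sum_{i\le 5}\widetilde{\phi}([\widetilde{Z}_i-Z_i,X],X_i)
\]
has to be controlled by other means. The paper kills the second sum using $[\widetilde{Z}_i-Z_i,X]\in\mathbb{V}\subset\g^{-2}+\tilde{\p}$ together with the (nontrivial) fact that $K$ vanishes on $\g^{-2}\wedge\g^{-2}$. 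For the first sum no blanket grading argument works; instead one uses the $\mathbb{V}$-grading to show only the homogeneity $4$ and $5$ parts of $K$ can contribute, reducing to two explicit $G_0$-irreducibles (the lowest weight vector $\phi_1\in\g_1\wedge\g_3\otimes\g_0$ and one raised vector $\phi_2$), for which $\widetilde{\partial}^*\widetilde{\phi}_i=0$ is verified directly. That finite check is not avoided by the structure you invoke.
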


\begin{proof} Let $K:\mathcal{G}\to\Lambda^2(\g/\p)^*\otimes\tilde{\g}$ be the curvature function of $\omega$ and let  $\widetilde{\mathcal{G}}\to\Lambda^2(\tilde{\g}/\tilde{\p})^*\otimes\tilde{\g}$ be the curvature function of $\widetilde{\omega}$. By  $\widetilde{P}$-equivariancy of $\widetilde{\partial}^*\widetilde{K}$ it suffices to prove that $\widetilde{\partial}^*\widetilde{K}(u)=0$ for any $u\in\mathcal{G}$ (rather than  $u\in\widetilde{\mathcal{G}}$) in order to show that the induced geometry is normal. Recall that, for $u\in\mathcal{G}$,  
\begin{align*}
\widetilde{K}(u)=(\Lambda^2\varphi\otimes i')(K(u)),
\end{align*}
where $i':\mathfrak{g}\to \tilde{\mathfrak{g}}$ is the Lie algebra inclusion and  $\varphi:(\mathfrak{g}/\mathfrak{p})^*\to (\tilde{\mathfrak{g}}/\tilde{\mathfrak{p}})^*$ is the dual map to the projection $\tilde{\mathfrak{g}}/\tilde{\mathfrak{p}}\cong \mathfrak{g}/\mathfrak{q}\to\mathfrak{g}/\mathfrak{p}$.

Next let us  recall some facts from the general theory of parabolic geometries, see  \cite{kostant, cap-correspondence} for details.
One can, as a $G_0$-representation, identify the harmonic curvature space $\mathrm{ker}(\partial^*)/\mathrm{im}(\partial^*)$ with the kernel of the so-called Kostant Laplacian $\mathrm{ker}(\square)\subset\Lambda^2\p_{+}\otimes\g$. A lowest weight vector of $\mathrm{ker}(\square)$  can be algorithmically  determined using Kostant's theorem. Consider the grading  \eqref{g2grad} of $\g$, then  in our case the lowest weight vector  is an element of the form 
$$\phi_1=Z_1\wedge Z_4\otimes A\in \g_1\wedge\g_3\otimes\g_0.$$
Now, since regular, normal parabolic geometries of type $(\mathrm{G}_2,P)$ are torsion-free,    Corollary 3.2 from \cite{cap-correspondence} implies that the curvature function $K$ takes values  in the $P$-module generated  by 
successively raising
 this  lowest weight vector. Note that this implies, for instance, that  $K(u)(X,Y)=0$ whenever both $X$ and $Y$ are contained in $\g^{-2}$. (Of course, as mentioned earlier, we can also read off this information from Theorem \ref{g2_gs}.)

Now to prove the lemma, pick an arbitrary map $\phi\in\Lambda^2\p_+\otimes\g$ contained in the $P$-module generated by raising  the lowest weight vector in $\mathrm{ker}(\square)$; in particular $\partial^*\phi=0$. Let 
\begin{align*}
\widetilde{\phi}=(\Lambda^2\varphi\otimes i')(\phi) 
\end{align*}
be the corresponding element in $\Lambda^2\tilde{\p}_{+}\otimes\tilde{\g}$.
Choose
% the grading  \eqref{g2grad} of $\g$ and 
elements
$X_1,X_2\in\mathfrak{g}_{-1}$, $X_3\in\mathfrak{g}_{-2}$, $X_4,X_5\in\mathfrak{g}_{-3}$ defining a basis for $\g/\p$,  supplement them by  $X_6,X_7\in\g_1$ to obtain a basis for $\tilde{\g}/\tilde{\p}\cong\g/\q$. Use the Killing form on $\tilde{\g}$, which restricts to a multiple of the Killing form on $\g$, to identify $\p_+\cong(\g/\p)^*$ and $\tilde{\p}_+\cong(\tilde{\g}/\tilde{\p})^*\cong(\g/\q)^*$, and let $Z_1,\dots,Z_5\in\p_+$ and $\widetilde{Z}_1,\dots,\widetilde{Z}_7$ be the respective dual bases.
By construction $\widetilde{\phi}$  vanishes upon insertion of elements of $\p$, hence $\widetilde{\phi}(\cdot,X_i)=0$ for $i=6,7$. Thus, \begin{align*}
\widetilde{\partial}^*\widetilde{\phi}(X)=2\sum_{i=1,..,5}[\widetilde{Z}_i,\widetilde{\phi}(X,X_i)]-\sum_{i=1,..,5}\widetilde{\phi}([\widetilde{Z}_i,X],X_i)
\end{align*}
for any $X\in\g$. Using that $\partial^* \phi=0$ this can also be written as 
%=\underbrace{2\sum_{i=1,..,5}[Z_i,K(X,X_i)]-\sum_{i=1,..,5}K([Z_i,X],X_i)}_{=0}+\\
\begin{align}\label{diffnor}
\widetilde{\partial}^*\widetilde{\phi}(X)=2\sum_{i=1,..,5}[\widetilde{Z}_i-Z_i,\widetilde{\phi}(X,X_i)]-\sum_{i=1,..,5}\widetilde{\phi}([\widetilde{Z}_i-Z_i,X],X_i).
\end{align}

Let us first show that the second term in the above expression vanishes. Note that $\tilde{\g}$ splits into the direct sum of $\g\subset\tilde{\g}$ and its orthogonal complement with respect to the Killing form $\g^{\perp}\subset\tilde{\g}$, which can be identified as a $\g$-representation with  the $7$-dimensional fundamental representation $\mathbb{V}$ of $\g$. By construction, the differences $\widetilde{Z}_i-Z_i$ are contained in the orthogonal complement to $\g$, i.e. in $\mathbb{V}=\g^{\perp}$. Now $\mathbb{V}$ is $\g$-invariant, hence $[\widetilde{Z}_i-Z_i,X]\subset\mathbb{V}$ for any $X\in\g$. 
More precisely,  $\tilde{Z}_i-Z_i\in\mathbb{V}_1$ for $i=1,2$, $\tilde{Z}_2-Z_2\in\mathbb{V}_2$, and $\tilde{Z}_i-Z_i=0$ for $i=4,5$, where we use the grading from \eqref{g2grad}. Moreover, 
%\begin{align*}
$\mathbb{V}=\bigoplus_{i=-2,\dots,2}\mathbb{V}_{i}\subset\g^{-2}+\tilde{\p}.$
%\end{align*}
Since $\phi(X,Y)=0$ if both $X$ and $Y$ are contained in $\g^{-2}$, this implies that
%\mathbb{V}_{-2}\oplus\cdots\oplus\mathbb{V}_3$
$$\sum_{i=1,..,5}\widetilde{\phi}\, ([\widetilde{Z}_i-Z_i,X],X_i)=0.$$

Now for the first term in \eqref{diffnor}, consider the $\g_0$-invariant decomposition of $\Lambda^2\p_{+}\otimes\g$ according to homogeneity with respect to the grading $\eqref{g2grad}$ on $\g$ (in the sense that an  element $\phi\in\g_i\wedge\g_j\otimes\g_k$ has homogeneity $i+j+k$). Since $[\g_i,\mathbb{V}_j]\subset\mathbb{V}_{i+j}$ and $\mathbb{V}^3=\bigoplus_{i\geq 3}\mathbb{V}_i= \{0 \}$, one sees that
\begin{align*}
\sum_{i=1,..,5}[\widetilde{Z}_i-Z_i,V]=0\ \, \text{and} \ \sum_{i=3,..,5}[\widetilde{Z}_i-Z_i,W]=0.
\end{align*} 
for any $V\in\g^2=\g_2\oplus\g_3$ and $W\in\g^1=\g_1\oplus\g_2\oplus\g_3$. Keeping in mind also that  $\widetilde{Z}_4-Z_4=\widetilde{Z}_5-Z_5=0$ and that $\phi(X,Y)=0$ if both $X$ and $Y$ are contained in $\g^{-2}$, one concludes  that it remains to inspect $\phi$'s contained in the $P$-module generated by the lowest weight vector $\phi_1$ intersected with
\begin{align*}
\g_1\wedge\g_3\otimes\g_0\oplus \g_2\wedge\g_3\otimes\g_0 \oplus \g_1\wedge\g_3\otimes\g_1
\end{align*}
(i.e., of homogeneity $4$ or $5$). Indeed, by Schur's Lemma and since $G_0$ includes into $\tilde{P}$, it suffices to compute $\widetilde{\partial}^*\widetilde{\phi}$ for one representative $\phi$ in each irreducible $G_0$-submodule of that space. One easily sees that there are only two such $G_0$-submodules: 
The lowest weight vector $$\phi_1=Z_1\wedge Z_4\otimes A$$ generates the first one,  and raising it we obtain a  generator of the second one of the form 
$$\phi_2=Z_3\wedge Z_4\otimes A + Z_1\wedge Z_4\otimes Z_1;$$ here $Z_1\in\g_1$, $Z_3\in\g_2$, $Z_4\in\g_3$, $A\in\g_0$  and, since  $\partial^*\phi_1=\partial^*\phi_2=0$, $[Z_1, A]=[Z_3, A]=[Z_4,A]=0$. Using that  this implies that $[\tilde{Z}_1, A]=[\tilde{Z}_3, A]=[\tilde{Z}_4,A]=0$ and the facts $\tilde{Z}_4-Z_4=0$ and $[Z_1,\tilde{Z}_1]=0,$ which can be verified directly, we immediately conclude that the corresponding elements 
$\widetilde{\phi}_1=\tilde{Z}_1\wedge \tilde{Z}_4\otimes A$ and 
$\widetilde{\phi}_2=\tilde{Z}_3\wedge\tilde{Z}_4\otimes A + \tilde{Z}_1\wedge \tilde{Z}_4\otimes Z_1$
are contained in the kernel of $\widetilde{\partial}^*$.
%the map defined by the first summand of  \eqref{diffnor}. 
This completes the proof.

\end{proof}

\subsection{Holonomy in $\mathrm{G}_2$ and a parallel tractor $3$-form}
Let $(\mathcal{G}\to M, \omega)$ be a Cartan geometry of type $(G,P)$ and let $\hat{\omega}$
be the canonical extension of $\omega$ to a  principal connection on the extended $G$-principal bundle $\hat{\mathcal{G}}:=\mathcal{G}\times_{P}G.$ Assume that $M$ is connected. The holonomy group of the Cartan geometry at a point $u\in\hat{\mathcal{G}}$ is then  defined to be the the \emph{holonomy group} $$\mathrm{Hol}_u(\omega):=\mathrm{Hol}_u(\hat{\omega})\subset G$$ of the principal connection $\hat{\omega}$ at that point. Since different choices of base points $u$ lead to conjugate subgroups within $G$, we will disregard the base point and speak of  the holonomy  $\mathrm{Hol}(\omega)$  of the Cartan connection $\omega$ (keeping in mind that it is well-defined only up to conjugacy in $G$).  If $(\mathcal{G}\to M, \omega)$ is a normal, regular parabolic geometry  encoding an underlying structure (e.g. a $(2,3,5)$-distribution or a Lie contact structure)  then the holonomy of the underlying structure is defined to be the holonomy of the associated normal Cartan connection.

Holonomy reductions of Cartan connections are related to parallel sections of so-called tractor bundles. Given a   $G$-representation $\mathbb{W}$,  the principal connection $\hat{\omega}\in\Omega^1(\hat{\mathcal{G}},\g)$  induces a linear connection $\nabla$ on the associated bundle 
$$\mathcal{W}:=\mathcal{G}\times_{P}\mathbb{W}=\hat{\mathcal{G}}\times_{G}\mathbb{W}.$$
Vector bundles arising that way are called \emph{tractor bundles} and the induced linear connections are called \emph{tractor connections}. If the Cartan connection $\omega$ is normal, the induced tractor connection is said to be normal. By definition of $\mathcal{W}$ as an associated bundle, sections $s\in\Gamma(\mathcal{W})$ correspond to smooth equivariant maps $f_s:\hat{\mathcal{G}}\to\mathbb{W}$. A section $s$ is parallel for the tractor connection if and only if the corresponding function is constant along all horizontal curves $c:I\to\hat{\mathcal{G}}$,\, $\hat{\omega}(c'(t))=0$. The holonomy group $\mathrm{Hol}(\omega)$ is then contained in the pointwise stabilizer of the parallel section $s$.

Now consider a Lie contact structure of signature $(1,2)$ on a manifold $\widetilde{M}$ 
%and let $(\widetilde{\mathcal{G}}\to \widetilde{M},\tilde{\omega})$ be the 
with associated regular, normal parabolic geometry of type $(\mathrm{O}(3,4),\widetilde{P})$. Let $\mathbb{V}$ be the standard representation for $\mathrm{O}(3,4)$ and $\mathcal{T}$ the associated tractor bundle with its normal tractor connection. %Consider the (unique up to constants) $SO(3,4)$-invariant  bilinear form and volume form on $\mathbb{V}$.  
The constant map $f_{H}$
 from the Cartan bundle
 onto the (unique up to constants) $\mathrm{O}(3,4)$-invariant  bilinear form  defines a parallel section $\mathbf{H}\in\Gamma(S^2\mathcal{T}^*)$ called the \emph{tractor metric}.

Next recall the following (well-known) characterization of the Lie group $\mathrm{G}_2$. Consider a $7$-dimensional vector space $\mathbb{V}$ with bilinear form $H$ of signature $(3,4)$. Let $\Phi\in\Lambda^3\mathbb{V}^*$ be a $3$-form, then
% \begin{align*}
 $(X, Y)\mapsto (X\hook\Phi)\wedge (Y \hook\Phi)\wedge\Phi$
 %\end{align*}
 defines a symmetric $\Lambda^7\mathbb{V}^*$-valued bilinear form on $\mathbb{V}$. If this bilinear form is non-degenerate, then it determines a volume form $\mathrm{vol}_{\Phi}$ and thus a $\mathbb{R}$-valued symmetric bilinear form $H_{\Phi}$. 
 Now suppose that $H_{\Phi}$ is a multiple of $H$, i.e.
  \begin{align}\label{comp}H_{\Phi}(X,Y)\mathrm{vol}_{\Phi}:=  (X\hook\Phi)\wedge (Y\hook\Phi)\wedge\Phi=\lambda \, H(X,Y)\mathrm{vol}_{\Phi},\end{align}
 for a constant $\lambda$. Then the stabilizer of $\Phi$ is a copy of $\mathrm{G}_2\subset \mathrm{SO}(3,4)=\mathrm{SO}(H)$.
  %as defined in section \ref{}.
   We will call a $3$-form satisfying the above condition \emph{compatible}, and we will use the same terminology on the level of tractors.

 As an immediate consequence of the construction and Lemma \ref{normal}, we obtain the following:

\begin{corollary}
 The Lie contact structure on $\widetilde{M}$ induced by a $(2,3,5)$-distribution  $\mathcal{D}$ admits a compatible tractor $3$-form $\mathbf{\Phi}\in\Gamma(\Lambda^3\mathcal{T}^*)$ which is parallel for the normal tractor connection, and the holonomy of the Lie contact structure reduces to $\mathrm{G}_2$.
 \end{corollary}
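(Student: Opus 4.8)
The plan is to deduce the Corollary directly from Lemma \ref{normal} together with the general machinery relating holonomy reductions of Cartan connections to parallel tractors. First I would recall the Fefferman-type construction set-up: the Cartan geometry $(\widetilde{\mathcal{G}}\to\widetilde{M},\widetilde{\omega})$ of type $(\mathrm{O}(3,4),\widetilde{P})$ is obtained by extending the $\mathrm{G}_2$-Cartan geometry $(\mathcal{G}\to M,\omega)$ via the inclusion $i:\mathrm{G}_2\hookrightarrow\mathrm{O}(3,4)$, so that $\widetilde{\mathcal{G}}=\mathcal{G}\times_Q\widetilde{P}$ and $j^*\widetilde{\omega}=\omega$. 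Passing to the extended principal bundles, the $\mathrm{O}(3,4)$-principal bundle $\hat{\widetilde{\mathcal{G}}}=\widetilde{\mathcal{G}}\times_{\widetilde{P}}\mathrm{O}(3,4)$ contains the $\mathrm{G}_2$-principal bundle $\hat{\mathcal{G}}=\mathcal{G}\times_P\mathrm{G}_2$ as a reduction of structure group along $i$, and under this reduction the canonical principal connection $\hat{\widetilde{\omega}}$ extending $\widetilde{\omega}$ pulls back to the canonical principal connection $\hat\omega$ extending $\omega$. Consequently the holonomy group $\mathrm{Hol}(\widetilde{\omega})=\mathrm{Hol}(\hat{\widetilde{\omega}})$ is contained (up to conjugacy) in the image of $\mathrm{Hol}(\hat\omega)\subset\mathrm{G}_2$, hence in $\mathrm{G}_2\subset\mathrm{O}(3,4)$.

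Next I would phrase this reduction tractorially. The $\mathrm{G}_2$-invariant $3$-form $\Phi\in\Lambda^3\mathbb{V}^*$ (the one from \eqref{3-form}, which satisfies the compatibility condition \eqref{comp} with respect to $H$ by the very definition of $\mathrm{G}_2\subset\mathrm{SO}(3,4)$ recalled above) is a fixed vector for the $\mathrm{G}_2$-action on $\Lambda^3\mathbb{V}^*$. Since $\hat{\mathcal{G}}\subset\hat{\widetilde{\mathcal{G}}}$ is a $\mathrm{G}_2$-reduction, the constant map $\hat{\mathcal{G}}\to\Lambda^3\mathbb{V}^*$, $u\mapsto\Phi$, extends $\mathrm{O}(3,4)$-equivariantly to a map $f_\Phi:\hat{\widetilde{\mathcal{G}}}\to\Lambda^3\mathbb{V}^*$ and thereby defines a section $\mathbf{\Phi}\in\Gamma(\Lambda^3\mathcal{T}^*)$ of the tractor bundle $\mathcal{T}=\widetilde{\mathcal{G}}\times_{\widetilde{P}}\mathbb{V}$. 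This section is parallel for the tractor connection induced by $\hat{\widetilde{\omega}}$ precisely because $f_\Phi$ is constant along the horizontal curves of $\hat{\widetilde{\omega}}$: these curves, by the reduction statement of the previous paragraph, stay inside $\hat{\mathcal{G}}$ where $f_\Phi$ is literally constant. Compatibility of $\mathbf{\Phi}$ with the tractor metric $\mathbf{H}$ is inherited pointwise from compatibility of $\Phi$ with $H$, so $\mathbf{\Phi}$ is a compatible parallel tractor $3$-form, and its pointwise stabilizer in $\mathrm{O}(3,4)$ is a copy of $\mathrm{G}_2$, which therefore contains the holonomy.

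The only point requiring care — and the step I expect to be the main obstacle, though Lemma \ref{normal} has already done the real work — is that the tractor connection in question must be the \emph{normal} one, so that ``the holonomy of the Lie contact structure'' (defined in the preceding paragraph as the holonomy of the canonical normal Cartan connection associated to the underlying structure) coincides with $\mathrm{Hol}(\widetilde{\omega})$. This is exactly what Lemma \ref{normal} guarantees: the induced Cartan connection $\widetilde{\omega}$ is regular (Proposition \ref{regular}) and normal (Lemma \ref{normal}), hence by the equivalence of categories in Theorem \ref{paraLie} it \emph{is} the canonical Cartan connection of the induced Lie contact structure, and the tractor connection on $\mathcal{T}$ it induces is the normal tractor connection. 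Thus no further regularity or normality verification is needed, and the Corollary follows. I would close by remarking that this gives a conceptual, computation-free proof of Proposition \ref{prop-hol}, the explicit EDS of Theorem \ref{g2_gs} and the matrix form \eqref{carcon235} of $\omega$ providing an independent check.
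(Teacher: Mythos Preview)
Your proposal is correct and follows essentially the same approach as the paper's proof: both argue that the Fefferman-type construction yields a $\mathrm{G}_2$-reduction of the extended principal bundle with connection, that the defining $3$-form $\Phi$ therefore induces a compatible parallel tractor $3$-form, and that Lemma \ref{normal} is precisely what identifies the resulting tractor connection with the normal one so that the holonomy reduction pertains to the Lie contact structure itself. Your exposition is somewhat more detailed (e.g.\ spelling out the horizontal-curve argument and invoking Proposition \ref{regular} and Theorem \ref{paraLie} explicitly), but the logic is the same.
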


\begin{proof}
 Let $(\mathcal{G}\to M,\omega)$ be the regular, normal parabolic geometry of type $(\mathrm{G}_2,P)$ associated with the $(2,3,5)$-distribution $\mathcal{D}$. Let  $(\widetilde{\mathcal{G}}\to \widetilde{M},\widetilde{\omega})$ be the induced parabolic geometry of type $(\mathrm{O}(3,4),\widetilde{P})$ on the twistor bundle. Then, by construction, the  principal connection $\hat{\widetilde{\omega}}$ on the extended bundle $\widetilde{\mathcal{G}}\times_{\tilde{P}}\mathrm{O}(3,4)$ reduces to  the $\mathrm{G}_2$-principal bundle connection $\hat{\omega}$ on $\mathcal{G}\times_{P}\mathrm{G}_2$.
 
 Now let $\Phi\in\Lambda^3\mathbb{V}^*$ be a defining $3$-form for $\mathrm{G}_2\subset \mathrm{O}(3,4)$. Then the constant $\mathrm{G}_2$-equivariant  map $f_{\Phi}:\hat{\mathcal{G}}\to\Lambda^3\mathbb{V}^*$ onto $\Phi$ defines a section $\mathbf{\Phi}\in\Gamma(\Lambda^3\mathcal{T}^*)$ of the Lie contact tractor bundle, which is compatible with $\mathbf{H}$. Since $\hat{\widetilde{\omega}}$ is the extension of the $G_2$-principal connection $\hat{\omega}$, $\mathbf{\Phi}$ is  parallel for the  tractor connection induced by  $\hat{\widetilde{\omega}}$, and by Lemma \ref{normal} this is the normal tractor connection on $\Lambda^3\mathcal{T}^*$.
Moreover, 
$$\mathrm{Hol}(\widetilde{\omega})=\mathrm{Hol}(\hat{\widetilde{\omega}})\subset \mathrm{G}_2\subset \mathrm{O}(3,4)$$ and, 
again by normality of $\widetilde{\omega}$, 
this is the holonomy of the underlying Lie contact structure. 

\end{proof}
 In particular, we have proven Proposition \ref{prop-hol}.

\subsection{A curved orbit decomposition}
Next we consider the more general situation of a  Lie contact structure of signature $(1,2)$
%and let $\mathcal{T}\to\widetilde{M}$ be the standard tractor bundle with tractor metric $H$, tractor volume form $\mathrm{vol}$ and normal tractor connection $\widetilde{\nabla}$. Suppose further
 together with a  tractor $3$-form $\mathbf{\Phi}\in\Gamma(\Lambda^3\mathcal{T}^*)$ that is compatible in the sense of \eqref{comp} and parallel for the normal tractor connection. %such that the associated  metric is the given tractor metric, i.e. $H_{\Phi}=H$ and $\tilde{\nabla}\Phi=0$.
 Then the pointwise stabilizer of $\mathbf{\Phi}$ is ${\mathrm{G}_2}$ and 
 the holonomy  of the Lie contact structure is reduced, 
$\mathrm{Hol}(\widetilde{\omega})\subset \mathrm{G}_2.$
In order to formulate the geometric implications of this set-up, we will 
apply
%employ  some more tools from parabolic geometry, in particular 
the curved orbit decomposition theorem  discussed below.

 Let $(\mathcal{G}\to M, \omega)$ be a Cartan geometry of type $(G,P)$ and let $s\in\Gamma(\mathcal{W})$ be a parallel section of some tractor bundle $\mathcal{W}$ with corresponding $G$-equivariant function $f_{s}:\hat{\mathcal{G}}\to \mathbb{W}$. Assuming that $M$ is connected, the image $f_s(\hat{\mathcal{G}})$ is a $G$-orbit $\mathcal{O}\subset\mathbb{W}$.
 In \cite{CGH-holonomy} the following pointwise invariant of $s$ is introduced: the image  $f_s(\mathcal{G}_x)\subset \mathcal{O}$ of a fibre is a $P$-orbit called the $P$-type of $x$ with respect to $s$. The manifold $M$ then decomposes according to the $P$-type of points
 into a disjoint union of \emph{curved orbits} $M_i$,  $$M=\bigsqcup_{i\in P\setminus \mathcal{O}}M_i,$$ where  $P\setminus \mathcal{O}$ denotes the set of $P$-orbits of the $G$-orbit $\mathcal{O}$.
 Fix an element in $\mathcal{O}$ and let $H\subset G$ be its stabilizer.
% ; this provides an identification  $\mathcal{O}\cong G/H$. 
 Then the  set of $P$-orbits of $\mathcal{O}\cong G/H$ is in bijective correspondence with the set of $H$-orbits of $G/P$ via $PgH\mapsto Hg^{-1}P$. In particular, the set of curved orbits can be parametrized by $H$-orbits of $G/P$. Now suppose that $M_i$ is a non-empty curved orbit and let $\alpha_i$ be a representative of the corresponding $H$-orbit $H\cdot \alpha_i\subset G/P$. Then it is  shown in \cite{CGH-holonomy} that:
  \begin{itemize}
 \item for any $x\in M_i$ there are neighbourhoods $U\subset M$ of $x$  and $V\subset G/P$ of $\alpha_i$  and a diffeomorphism $\psi:U\to V$ such that $\psi(U\cap M_i)=V\cap (H\cdot \alpha_i)$. 
 %In particular $M_i$ is an initial submanifold of the same dimension as the corresponding orbit.
 \item $M_i$ carries an induced Cartan geometry $(\mathcal{G}_i\to M_i,\omega_i)$  of the same type as the corresponding $H$-orbit in $G/P$. The Cartan bundle can be realized as a subbundle $\mathcal{G}_i\subset  \mathcal{G}\vert_{M_i}$  and the Cartan connection $\omega_i$ is the pullback of $\omega$ with respect the corresponding inclusion.
 %$\omega$  reduces  to a Cartan connection of type $(H,H_i)$ on a subbundle $\mathcal{G}_i\subset  \mathcal{G}\vert_{M_i}$ over $M_i$, where $H_i\subset H$ is the stabilizer of $\alpha_i\in G/P$ in $H$.
 \end{itemize}

In the following we apply this result in the case of interest for us, i.e., when the Cartan geometry is of type $(\mathrm{O}(3,4),\widetilde{P}),$ the section $s=\mathbf{\Phi}\in\Gamma(\Lambda^3\mathcal{T}^*)$ is a parallel compatible tractor $3$-form and the stabilizer $H=\mathrm{G}_2$. As before,  $\widetilde{P}\subset \mathrm{O}(3,4)$ denotes the Lie contact parabolic, $P\subset \mathrm{G}_2$ the $(2,3,5)$ parabolic and $\bar{P}\subset \mathrm{G}_2$ the $\mathrm{G}_2$-contact parabolic as introduced in Section \ref{parabolics}.
 
  \begin{theorem}\label{thmHolred}
Suppose $\widetilde{M}$ is a $7$-manifold endowed with a Lie contact structure of signature $(1,2)$ and let $(\widetilde{\mathcal{G}}\to\widetilde{M},\widetilde{\omega})$ be the corresponding regular, normal parabolic geometry. Let $\mathbf{\Phi}\in\Gamma(\Lambda^3\mathcal{T}^*)$ be a parallel compatible tractor $3$-form that defines a holonomy reduction to $\mathrm{G}_2$. 

Then the corresponding curved orbit decomposition is of the form
\[\widetilde{M}=\widetilde{M}^o\cup\widetilde{M}',\]
where $\widetilde{M}^o$ is open and $\widetilde{M}'$ (if non-empty) is a $5$-dimensional submanifold of $\widetilde{M}$. 
\begin{enumerate}
\item If $\widetilde{M}'$ is non-empty, then it carries an induced $\mathrm{G}_2$-contact structure.

\item $\widetilde{M}^o$ carries an induced Cartan geometry $(\mathcal{G}\to \widetilde{M}^o,\omega^o)$ of type $(G,Q)$.   
Suppose further that the curvature of this Cartan geometry satisfies $K^o(u)(X,Y)=0$ for all $X\in\p$ and $Y\in\g$. Then the rank $2$ bundle $\mathcal{V}\subset T\widetilde{M}^o$ corresponding to $\p/\q$ is integrable and around each point $x\in\widetilde{M}^o$ we can form a local   $5$-dimensional leaf space which inherits a $(2,3,5)$ distribution. 
%The corresponding twistor Lie contact structure is locally isomorphic to the  Lie contact structure we started out with.
\end{enumerate}
\end{theorem}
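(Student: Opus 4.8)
The plan is to apply the curved orbit decomposition theorem (as recalled above, following \cite{CGH-holonomy}) to the parallel compatible tractor $3$-form $\mathbf{\Phi}$, so the core of the argument is purely algebraic: to understand the $\mathrm{G}_2$-orbits on $\mathrm{O}(3,4)/\widetilde{P}=\mathrm{Gr}(2,\mathbb{R}^{3,4})$ and the induced Cartan geometry on each. This has essentially already been done in Proposition \ref{orbits}: the Grassmannian of totally null $2$-planes decomposes into the open $\mathrm{G}_2$-orbit of generic $2$-planes and the closed $5$-dimensional orbit of special $2$-planes. So I would first invoke Proposition \ref{orbits} together with the curved orbit decomposition theorem to conclude immediately that $\widetilde{M}=\widetilde{M}^o\cup\widetilde{M}'$ with $\widetilde{M}^o$ open (corresponding to the open orbit of generic $2$-planes) and $\widetilde{M}'$, if non-empty, a submanifold of dimension $\dim(\mathrm{G}_2/\bar{P})=5$ (corresponding to the closed orbit). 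The local diffeomorphism statement of \cite{CGH-holonomy} then shows $\widetilde{M}'$ is embedded, and that each curved orbit carries an induced Cartan geometry whose type matches the type of the corresponding homogeneous orbit.

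For part (1): the stabilizer in $\mathrm{G}_2$ of a special totally null $2$-plane is exactly the $\mathrm{G}_2$-contact parabolic $\bar{P}$, by the very definition of $\bar{P}$ in Section \ref{parabolics}. Hence the induced Cartan geometry on $\widetilde{M}'$ is of type $(\mathrm{G}_2,\bar{P})$; invoking the equivalence of categories between such Cartan geometries and $\mathrm{G}_2$-contact structures (Theorem in Section \ref{G_2_contact}) — after checking regularity and normality of the induced connection, which follows from the general curved orbit machinery since the ambient connection is normal and $\widetilde{M}'$ is the closed orbit — gives the $\mathrm{G}_2$-contact structure. For part (2): by Proposition \ref{orbits} the stabilizer of a generic $2$-plane is the subgroup $Q=G_0\ltimes\exp(\g_2\oplus\g_3)\subset P$, so $\widetilde{M}^o$ carries an induced Cartan geometry $(\mathcal{G}\to\widetilde{M}^o,\omega^o)$ of type $(\mathrm{G}_2,Q)$, realized as a reduction of $\widetilde{\mathcal{G}}|_{\widetilde{M}^o}$. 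Via $\omega^o$ we get $T\widetilde{M}^o\cong\mathcal{G}\times_Q\g/\q$, and the $Q$-submodule $\p/\q$ gives the rank $2$ bundle $\mathcal{V}$ (this is the vertical bundle of the construction, as identified in Section \ref{twistor-bundle}).

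The substantive point in part (2) is integrability of $\mathcal{V}$ and passage to a leaf space. Here I would use the standard fact that a subbundle $\mathcal{V}\subset T\widetilde{M}^o$ corresponding to a $Q$-submodule $\mathfrak{r}/\q\subset\g/\q$ with $\mathfrak{r}$ a subalgebra is integrable as soon as the curvature $K^o$ of $\omega^o$, viewed as a $2$-form, annihilates $\mathcal{V}$ in an appropriate sense — concretely, the bracket of two sections of $\mathcal{V}$ projects, via $\omega^o$, to $[\p,\p]+\q\subset\g$ corrected by a curvature term $K^o(\cdot,\cdot)$; since $\p$ is a subalgebra, $[\p,\p]\subset\p$, and the hypothesis $K^o(u)(X,Y)=0$ for all $X\in\p$, $Y\in\g$ kills the curvature correction, so $[\Gamma(\mathcal{V}),\Gamma(\mathcal{V})]\subset\Gamma(\mathcal{V})$ and $\mathcal{V}$ is integrable by Frobenius. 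Then, locally around any $x\in\widetilde{M}^o$, the leaf space $M$ of the foliation by integral manifolds of $\mathcal{V}$ is a smooth $5$-manifold; since $\q\subset\p\subset\g$ with $\p$ the $(2,3,5)$-parabolic and the curvature condition exactly says $K^o$ descends (it is "horizontal over $M$"), the Cartan geometry $(\mathcal{G}\to\widetilde{M}^o,\omega^o)$ of type $(\mathrm{G}_2,Q)$ descends to a Cartan geometry of type $(\mathrm{G}_2,P)$ on the local leaf space $M$ — this is precisely the inverse of the correspondence space construction of Section \ref{secContruction}. By the equivalence of categories of Section on $(2,3,5)$-distributions, this Cartan geometry encodes a $(2,3,5)$-distribution on $M$, provided the descended connection is regular and normal; normality is inherited (the curvature condition ensures the descended curvature function is the same object, and $\widetilde\partial^*$-closedness passes down via the inclusion $\p_+\hookrightarrow\widetilde\p_+$ dualized), and regularity is automatic once one checks the symbol matches $\g_-$. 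The main obstacle I expect is the bookkeeping verifying that the curvature condition $K^o(u)(X,Y)=0$ for $X\in\p$, $Y\in\g$ is exactly the condition needed both for Frobenius integrability of $\mathcal{V}$ and for the descended Cartan geometry to be well-defined and normal — i.e.\ that "$K^o$ annihilates $\mathcal{V}$" and "$K^o$ is the pullback of a curvature function on the leaf space" are the same statement, which requires care with the identification $T\widetilde{M}^o\cong\mathcal{G}\times_Q\g/\q$ and with which entries of the curvature function are being constrained.
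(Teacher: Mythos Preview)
Your overall strategy is the same as the paper's: invoke Proposition~\ref{orbits} together with the curved orbit decomposition theorem of \cite{CGH-holonomy} to get the splitting $\widetilde{M}=\widetilde{M}^o\cup\widetilde{M}'$, identify the induced Cartan geometries of types $(\mathrm{G}_2,\bar{P})$ and $(\mathrm{G}_2,Q)$ on the two orbits, and then pass to the local leaf space via the correspondence space machinery. The integrability argument you give for $\mathcal{V}$ is essentially the content of the cited result from \cite{cap-correspondence}.

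However, there are genuine gaps at two points, both concerning \emph{regularity} of the induced Cartan connections. First, in part~(1) you claim that regularity and normality ``follow from the general curved orbit machinery since the ambient connection is normal.'' This is not true in general: the curved orbit theorem gives you a Cartan geometry of the correct type on each orbit, but neither regularity nor normality is automatic. The paper establishes regularity on $\widetilde{M}'$ by a specific argument: the curvature of the regular normal Lie contact connection takes values in $\Lambda^2(\tilde{\g}/\tilde{\p})^*\otimes\tilde{\g}^{-1}$ (this uses the structure of the harmonic curvature together with the Bianchi identity), and then one checks that $\tilde{\g}^{-1}\cap\g$ coincides with the filtration component $\bar{\g}^{-1}$ for the $\mathrm{G}_2$-contact grading, which forces homogeneity $\geq 1$. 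Normality is not asserted and not needed.

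Second, and more seriously, in part~(2) you write that ``regularity is automatic once one checks the symbol matches $\g_-$.'' This is where the actual work lies, and the paper's argument is not routine. From the same curvature constraint one gets that $K^o$ takes values in $\Lambda^2(\g/\q)^*\otimes(\tilde{\g}^{-1}\cap\g)$, but now $\tilde{\g}^{-1}\cap\g=\g_{-3}\oplus\g_{-1}\oplus\p$, which is only a $Q$-module, not a $P$-module; in particular a $\g_{-3}$-component would violate regularity of the descended $(\mathrm{G}_2,P)$-geometry. The paper rules this out by observing that the curvature hypothesis $K^o(u)(X,\cdot)=0$ for $X\in\p$ upgrades $Q$-equivariance of $K^o$ to $P$-equivariance, and then acting by elements of $\exp(\g_1)\subset P$ shows that any $\g_{-3}$-component would force a $\g_{-2}$-component, contradicting the constraint on the values. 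This step is the substantive content you are missing. Finally, note that normality of the descended connection is \emph{not} established in the paper (see the remark immediately following the proof); to obtain a $(2,3,5)$-distribution on the leaf space one only needs regularity, so your appeal to normality is both unnecessary and unjustified here.
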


\begin{proof}
 The first statement is an immediatete consequence of  Proposition \ref{orbits}, which describes the $\mathrm{G}_2$-orbit decomposition of $\mathrm{O}(3,4)/\widetilde{P}$,  and the curved orbit decomposition theorem. Combining these results  shows that the manifold $\widetilde{M}$ decomposes into an open submanifold $\widetilde{M}^o$ and a  complement $\widetilde{M}'$, which is  either empty or a $5$-dimensional submanifold. 
 %The theorem further shows that the curved orbits  inherit Cartan geometries of the same type as the corresponding orbits in the homogeneous model:  
 $\widetilde{M}'$ carries  an induced  Cartan geometry $(\mathcal{G}'\to \widetilde{M}',\omega')$ of type $(G,\bar{P})$ and $\widetilde{M}^o$ carries an induced  Cartan geometry  $(\mathcal{G}^o\to \widetilde{M}^o,\omega^o)$ of type $(G,Q).$ These can be realized as subbundles in $\widetilde{\mathcal{G}}\vert_{\widetilde{M}'}$ and $\widetilde{\mathcal{G}}\vert_{\widetilde{M}^o},$ respectively, and the Cartan connections, $\omega'$ and $\omega^o$, and their curvatures, $K'$ and $K^o$, are the pullbacks of the Cartan connection $\widetilde{\omega}$ and curvature $\widetilde{K}$ with respect to the inclusions.

Using this, we next show that the induced Cartan connection on $\widetilde{M}'$ is regular. First,  the curvature $\widetilde{K}$ of the regular, normal Lie contact Cartan connection takes values in $\Lambda^2(\tilde{\g}/\tilde{\p})^*\otimes\tilde{\g}^{-1},$ which follows from the structure of the harmonic curvature $\widetilde{K}_{H}$ and an application of the Bianchi identity. 
This implies that the curvature $K'$ of the reduced connection   takes values in $\Lambda^2(\g/\bar{\p})^*\otimes(\tilde{\g}^{-1}\cap\g)$. Now $\tilde{\g}^{-1}\cap\g$ coincides with the filtration component $\bar{\g}^{-1}$ for the $\mathrm{G}_2'$ contact grading \eqref{g2bargrad}, and this implies that $K'$ is of homogeneity $\geq 1$, i.e.  the Cartan connection $\omega'$ is regular. In particular, $\widetilde{M}'$ carries an induced $\mathrm{G}_2'$ contact structure.

Next we investigate the Cartan geometry of type $(G,Q)$ on $\widetilde{M}^o$. Via the Cartan connection, the $Q$-submodule $\p/\q\subset \g/\q$ determines a distinguished rank $2$ subbundle $\mathcal{V}$ in $T\widetilde{M}^o$. Now suppose that the curvature function of  satisfies $K^o(u)(X,\cdot)=0$ for all $X\in\p$. It is proven in \cite{cap-correspondence}, see also Theorem 1.5.14 in \cite{cap-slovak-book}, that this implies that the subbundle $\mathcal{V}$ is integrable, and  locally around each point one can form a corresponding leaf space $M$, which carries an induced Cartan geometry  of type $(\mathrm{G}_2,P)$.

To see that the Cartan geometry  of type $(\mathrm{G}_2,P)$ determines a $(2,3,5)$-distribution on the leaf space $M$, it remains to see that the Cartan connection is regular. Arguing as before shows that the $Q$-equivariant curvature function $K^o$ takes values in  $\Lambda^2(\g/\q)^*\otimes(\tilde{\g}^{-1}\cap\g)$. Looking at the gradings \eqref{sograd} and \eqref{g2grad} shows that $\tilde{\g}^{-1}\cap\g=\g_{-3}\oplus\g_{-1}\oplus\p$. Note that this space is a $Q$-module, but not a $P$-module. The condition $K^o(u)(X,\cdot)=0$ for all $X\in\p/\q$ in particular implies that, locally, $\mathcal{G}\to M$ is a $P$-principal bundle  and the curvature function $K^o$ is $P$-equivariant. Now suppose that for some $u\in\mathcal{G}$ and $X,Y\in \g$,  $K^o(u)(X,Y)$ has a non-trivial component in $\g_{-3}$. Then we can find some $g\in\mathrm{exp}(\g_1)\subset P$ such that  $K^o(u\cdot g^{-1})(\mathrm{Ad}(g)\cdot X,\mathrm{Ad}(g)\cdot Y)=\mathrm{Ad}(g)\cdot K^o(u)(X,Y)$ has a non-trivial  component in $\g_{-2}$. But this is a contradiction to the assumptions on the values of $K^o$. Hence under the additional curvature condition, the curvature function takes indeed values in $\Lambda^2(\g/\p)^*\otimes \g^{-1}$, which implies that the  $(\mathrm{G}_2,P)$ geometry on $M$ is regular.
\end{proof}

\begin{remark}
One can show that the resulting Cartan connection $\omega\in\Omega^1(\mathcal{G}\to M,\g_2)$  is indeed the normal Cartan connection associated with the induced distribution on the local leaf space. However, this requires more  information on the curvature of the regular, normal Lie contact Cartan connection, and will be discussed elsewhere.
\end{remark}
  
 \begin{remark}
The decomposition into curved orbits can also be described using the so-called 
%next ingredient that we will use is that of 
normal BGG solution determined by the parallel tractor $3$-form $\mathbf{\Phi}$.

Recall (see Section \ref{parabolics}) that the parabolic subgroup $\widetilde{P}$ preserves a filtration
%$\mathbb{E}\subset\mathbb{E}^{\perp}\subset\mathbb{V}$
$\widetilde{\mathbb{V}}^{-1}\supset \widetilde{\mathbb{V}}^{0}\supset\widetilde{\mathbb{V}}^{1}$ 
of the standard representation, where $\widetilde{\mathbb{V}}^{1}=\mathbb{E}$, $\widetilde{\mathbb{V}}^{0}=\mathbb{E}^{\perp}$ and $\widetilde{\mathbb{V}}^{-1}=\mathbb{V}$.
 Correspondingly,  the standard tractor bundle is filtered
 $$\mathcal{T}\supset \mathcal{T}^{0}\supset\mathcal{T}^{1}$$
%$$\mathcal{E}\subset \mathcal{E}^{\perp}\subset\mathcal{T},$$
where  $\mathcal{T}^{1}=E$,  $\mathcal{T}^{0}/\mathcal{T}^{1}\cong F$ and $\mathcal{T}/\mathcal{T}^{1}\cong E^*$.  
 There is an induced filtration of $\Lambda^3\mathcal{T}^*,$
  and a natural projection onto the quotient by the largest proper subbundle in this filtration, $$\Pi:\Lambda^3\mathcal{T}^*\to \Lambda^3\mathcal{T}^*/(\Lambda^3\mathcal{T}^*)^0\cong\Lambda^2 E^*\otimes F.$$
  % be the natural  projection. 
 The image of a tractor $\Phi\in\Gamma(\Lambda^3\mathcal{T}^*)$ under this projection defines an element $$\phi\in\Gamma(\Lambda^2 E^*\otimes F),$$ i.e. a weighted section of $F$. 
 By the general theory of parabolic geometries,  if $\Phi$ is a \emph{parallel} tractor $3$-form,  then the underlying section  $\phi\in\Gamma(\Lambda^2 E^*\otimes F)$ is contained in the kernel of a  first order linear differential operator, called \emph{first BGG operator for $\Lambda^3\mathcal{T}^*$}. Solutions of the corresponding overdetermined system of PDEs that are obtained in that way are called normal $BGG$ solutions. See \cite{BGG-2001, CGH-holonomy} for more details.

Now suppose that $\mathbf{\Phi}\in\Gamma(\Lambda^3\mathcal{T}^*)$ is a parallel compatible tractor $3$-form. Recall (see Section \ref{parabolics} and Proposition \ref{orbits}) that inserting a totally null $2$-plane $\mathbb{E}$ into  a  defining  $3$-form for $\mathrm{G}_2$ gives either zero or a null line $\ell\in\mathbb{E}^{\perp}$ transversal to $\mathbb{E}$.  Hence, for a parallel compatible tractor $3$-form $\Phi$, at any point $x\in M$ either $\phi_x=0$  or $\phi_x$ defines a null line in $F$ with respect to $b$. The decomposition of $\widetilde{M}$ into $\widetilde{P}$-types of $\Phi$ corresponds to the decomposition into the zero locus $M'$ of $\phi$ and the open subset  $M^o$ where $\phi$ is nonvanishing.
On $M^o$ , via the isomorphism $\mathcal{H}=E^*\otimes F$, the filtration $\phi\subset\phi^{\perp}\subset F$ determines
a distinguished filtration of a rank $2$ subbundle contained in a rank $4$ subbundle contained in the contact subbundle 
$$\mathcal{V}_{\phi}\subset \mathcal{D}_{\phi}\subset\mathcal{H}.$$ 
Looking at the explicit matrices \eqref{sograd} and \eqref{g2grad}, it can be seen that via the isomorphisms $\g/\q\cong \tilde{\g}/\tilde{\p}$ and  $\tilde{\g}^{-1}/\tilde{\p}\cong \mathbb{E}^*\otimes \mathbb{E}^{\perp}/\mathbb{E}$,  the subspace $\p/\q$ corresponds to $\mathbb{E}^*\otimes \ell$, where $\ell$ now denotes the projection of ${\mathbb{E}}\hook\mathbf{\Phi}$ to $\mathbb{E}^{\perp}/\mathbb{E}$, and $\g^{-1}/\q$ corresponds to $\mathbb{E}^*\otimes \ell^{\perp}$. Hence, for Lie contact structures coming from $(2,3,5)$ distributions via the twistor construction, $\mathcal{V}_{\phi}$ is the vertical bundle for the projection $\widetilde{M}\to M$, and  $\mathcal{D}_{\phi}=\widetilde{\mathcal{D}}$ projects to the downstairs $(2,3,5)$-distribution.
%\end{remark}

%\begin{remark}
The twistorial construction of Lie contact structures from $(2,3,5)$-distributions provides many non-flat examples of holonomy reductions to $\mathrm{G}_2$. However, by construction, in these cases the  corresponding parallel tractor $3$-form $\mathbf{\Phi}$ has only one $\widetilde{P}$-type and the underlying $BGG$ solution $\phi$ is  nowhere vanishing.
%and so we do not get curved orbit decompositions  that way.
It would be interesting to see if one can find non-flat examples admitting $\phi$'s with non-empty zero sets  that carry induced $\mathrm{G}_2$-contact structures.
\end{remark}

%\bibliographystyle{plain}
%\bibliography{Bibliography.bib}

%\begin{thebibliography}{10}

\def\polhk#1{\setbox0=\hbox{#1}{\ooalign{\hidewidth
  \lower1.5ex\hbox{`}\hidewidth\crcr\unhbox0}}}

\end{document}